\theoremstyle{plain}
\newtheorem{theorem}{Theorem}[section]
\newtheorem{lemma}[theorem]{Lemma}
\newtheorem{proposition}[theorem]{Proposition}
\newtheorem*{theorem*}{Theorem}
\theoremstyle{definition}
\newtheorem{definition}[theorem]{Definition}
\newtheorem{example}[theorem]{Example}
\theoremstyle{remark}
\newtheorem{remark}[theorem]{Remark}
\newcommand{\mycomment}[1]{%
}%
\def\Q{{\bf Q}}
\def\Z{{\bf Z}}
\def\C{{\bf C}}
\def\N{{\bf N}}
\def\R{{\bf R}}
\def\O{{\mathcal{O}}}
\def\H{{H}}
\def\Qbar{{\overline{{\bf Q}}}}
\def\zp{{\Z_p}}
\def\qp{{\Q_p}}
\def\Hom{\mathrm{Hom}}
\def\A{\mathbf{A}}
\def\Af{{\bf A}_{f}}
\def\j{j}
\def\Sc{\mathcal{S}}
\def\epsilon{\varepsilon}
\def\Sh{{\operatorname{Sh}}}
\def\GSp{{\mathrm{GSp}}}
\def\Sp{{\mathrm{Sp}}}
\def\SL{{\mathrm{SL}}}
\def\GL{\mathrm{GL}}
\def\Gm{\mathrm{G}_{\rm m}}
\def\G{\mathbf{G}}
\def\H{\mathbf{H}}
\def\B{\mathbf{B}}
\def\T{\mathbf{T}}
\def\matrix#1#2#3#4{{\big(\begin{smallmatrix}#1&#2\\ #3&#4\end{smallmatrix}\big)}}
\title[On higher regulators of Siegel varieties]{On higher regulators of Siegel varieties}
\author{Antonio Cauchi{,} Francesco Lemma and Joaqu\'in Rodrigues Jacinto}
\def\@tocline#1#2#3#4#5#6#7{\relax
  \ifnum #1>\c@tocdepth 
  \else
    \par \addpenalty\@secpenalty\addvspace{#2}%
    \begingroup \hyphenpenalty\@M
    \@ifempty{#4}{%
      \@tempdima\csname r@tocindent\number#1\endcsname\relax
    }{%
      \@tempdima#4\relax
    }%
    \parindent\z@ \leftskip#3\relax \advance\leftskip\@tempdima\relax
    \rightskip\@pnumwidth plus4em \parfillskip-\@pnumwidth
    #5\leavevmode\hskip-\@tempdima
      \ifcase #1
       \or\or \hskip 1em \or \hskip 2em \else \hskip 3em \fi%
      #6\nobreak\relax
    \dotfill\hbox to\@pnumwidth{\@tocpagenum{#7}}\par
    \nobreak
    \endgroup
  \fi}
\begin{document}
\maketitle
\selectlanguage{english}

\begin{abstract}
Inspired by Beilinson conjectures, we construct classes in the middle degree plus one motivic cohomology of the Siegel Shimura variety of almost any dimension. We compute their image by Beilinson's higher regulator in terms of Rankin-Selberg type automorphic integrals. Our construction generalises the one for $\GSp(4)$ in \cite{lemmarf} and for $\GSp(6)$ in \cite{CLRGSp6}. For Siegel varieties associated to small genus symplectic groups, we also show how these integrals unfold.
\end{abstract}

\setcounter{tocdepth}{2}
\tableofcontents

\section{Introduction}

The conjectures of Beilinson and Deligne relate special values of $L$-functions to the motivic cohomology of an algebraic variety defined over a function field. They widely generalize  the class number formula for the Hasse-Weil zeta function of a number field and they are related to the celebrated Birch and Swinnerton-Dyer conjecture for the $L$-function of an elliptic curve. \\

One of the first steps in the way to proving instances of Beilinson conjectures is to construct potentially interesting classes in the motivic cohomology of an algebraic variety and to compute their image under the so called archimedean regulator into Deligne cohomology. These are often related to certain adelic integrals which, in the few known instances of the conjecture, are related to the special values of the pertaining $L$-function. \\

In \cite{lemmarf}, the second named author constructed classes in the fourth dimensional motivic cohomology group of the Siegel threefold and related their archimedean regulators to non-critical values of the Spin $L$-function of automorphic forms of $\GSp_4$. In \cite{CLRGSp6}, following \cite{GSp6paper1} and based on the work \cite{PollackShah} of Pollack and Shah, we constructed classes in the seventh dimensional motivic cohomology group of the Siegel sixfold and also related their archimedean regulators to non-critical values of the Spin $L$-function of automorphic forms of $\GSp_6$. The main purpose of this article is to construct classes in the middle degree plus one motivic cohomology of Siegel varieties and to prove a formula relating their image under Beilinson regulator to certain adelic integrals of Rankin-Selberg type. For low rank cases, we show that these integrals unfold to certain Fourier coefficients. \\

Let $\G = \GSp_{2n}$ be the symplectic similitude group of rank $n$ and denote by $\Sh_{\G}$ the Shimura variety associated to $\G$. These Shimura varieties are moduli spaces for principally polarized abelian schemes with certain level structure and their cohomology plays a key role in the arithmetic theory of automorphic forms, particularly in the Langlands program. For instance, they are known to realise \cite{KretShin} the Galois representation associated to certain cuspidal automorphic representations of $\G(\A)$. Our first task is to propose a construction, for almost all values of $n \geq 2$, of certain cohomology classes
\[ \mathrm{Eis}_{\mathcal{M}, n} \in H^{d + 1}_\mathcal{M}(\Sh_{\G}, \overline{\Q}(t)), \]
where $d = \frac{n (n+1)}{2}$ denotes the dimension of $\Sh_{\G}$ and $t = \tfrac{1}{2}(d+\eta+2)\in \Z$, where we let $\eta = 0$ if $d$ is even and $\eta  = 1$ if $d$ is odd. Our expectations and interest on these motivic classes are supported by the fact that Beilinson's conjectures predict that the $\pi_f$-isotypic component of $H^{d + 1}_\mathcal{M}(\Sh_{\G}, \overline{\Q}(t))$ is non-zero  whenever the Spin $L$-function of a cuspidal automorphic representation $\pi= \pi_\infty \otimes \pi_f$ of $\G(\A)$ vanishes at $\tfrac{1}{2}$ to the left of the center of symmetry with respect to the functional equation. This latter condition is often satisfied: for instance, each cohomological cuspidal automorphic representation which is stable at infinity does.  \\

Our construction is based on the existence of certain special algebraic cycles inside $\Sh_\G$ and cohomology classes on these cycles coming from modular units. We show, using an elementary combinatorial lemma, that there exists a subgroup $\H$ of $\G$ satisfying certain particular properties. The subgroup $\H$ is constructed as a fiber product over $\GL_1$ of smaller rank symplectic groups over some totally real fields and it has a natural projection to $\GL_2$ if $\eta = 0$ and to $\GL_2 \times_{\GL_1} \GL_2$ if $\eta = 1$. Some examples are
\begin{align*}
& \GSp_2 \boxtimes \GSp_2 \hookrightarrow \GSp_4, \;\;\;\;\;\; & &\GSp_2 \boxtimes \GSp_{2, F}^* \hookrightarrow \GSp_6, \\
&\GSp_2 \boxtimes \GSp_2 \boxtimes \GSp_4 \hookrightarrow \GSp_8, \;\;\;\;\;\;
& &\GSp_2 \boxtimes \GSp_2 \boxtimes \GSp_6 \hookrightarrow \GSp_{10}, \\
&\GSp_2 \boxtimes \GSp_4 \boxtimes \GSp_8 \hookrightarrow \GSp_{14}.
\end{align*}
In the second example, $F$ is any quadratic \'etale algebra over $\Q$, i.e. $F$ is either $\Q \times \Q$ or a quadratic totally real extension of $\Q$. Denoting by $\iota: \H \to \G$ the embedding of groups, we show that $\iota$ induces a closed embedding of Shimura varieties $\iota: \Sh_\H \to \Sh_\G$ of codimension $(d - \eta)/2$. The second basic input for our construction are modular units. These are elements of the motivic cohomology groups $H^1_\mathcal{M}(\Sh_{\GL_2}, \overline{\Q}(1)) \cong \mathcal{O}(\Sh_{\GL_2})^\times \otimes_\Z \overline{\Q}$, which can be seen as motivic realisations of Eisenstein series. Indeed, by the second Kronecker limit formula, their logarithm is related to limiting values of some weight 0 real analytic Eisenstein series. The elements $\mathrm{Eis}_{\mathcal{M}, n} \in H^{d + 1}_\mathcal{M}(\Sh_{\G}, \overline{\Q}(t))$ are then constructed as the push-forward to $\Sh_\G$ of the pullback to $\Sh_\H$ of one or two modular units, i.e. letting $\mathrm{pr} : \Sh_\H \to \Sh_{\H'}$, where $\H' = \GL_2$ or $\GL_2 \otimes_{\GL_1} \GL_2$ according to whether $\eta = 0$ or $1$, and letting $u$ be either one modular unit or a cup product of two modular units, we define
\[ \mathrm{Eis}_{\mathcal{M}, n} = \iota_* \mathrm{pr}^*(u). \]

Our next task is to study the archimedean regulators of these classes. Recall the existence of Beilinson's regulator map
\[  r_\mathcal{D}: H^{{d}+1}_{\mathcal{M}}(\Sh_{\G}(U), \overline{\Q}(t)) \to  H^{{d} + 1}_{\mathcal{D}}(\Sh_{\G}(U)/\R, \R(t)) \otimes_{\R} \overline{\Q}. \]
According to Beilinson-Deligne conjectures, if the latter Deligne--Beilinson cohomology group is non-zero, one expects to be able to construct non-zero motivic cohomology elements which are related to special values of $L$-functions.
One basic idea to deal with these regulators, which comes from Beilinson, is to use Poincar\'e duality to pair the images of these classes with a suitably constructed differential form associated to a cuspidal form in an automorphic representation of $\G(\A)$. Let $\pi = \pi_\infty \otimes \pi_f$ be a cuspidal automorphic representation of $\G(\A)$  with trivial central character, for which $\pi_\infty$ is a discrete series  and such that $\pi_f$ has a non-zero vector fixed by $U$. Associated to a cusp form $\Psi = \Psi_\infty \otimes \Psi_f \in \pi_\infty \otimes \pi_f^U$ such that $\Psi_\infty$ is a highest weight vector of one minimal $K_\infty$-type of $\pi_\infty$, there is a harmonic differential form $\omega_\Psi$ on $\Sh_{\G}(U)$. Our first main result is the following.

\begin{theorem} \label{IntroTheoA}
The differential form $\omega_\Psi$ induces a natural linear form
\[ \langle \,\, , \omega_\Psi \rangle : H^{d+1}_{\mathcal{D}}(\Sh_{\G}(U)/\R, \R(t)) \otimes_{\R} \overline{\Q} \rightarrow \C \otimes_{\Q} \Qbar{} \]
and we have
\[ \langle r_\mathcal{D}({\rm Eis}_{\mathcal{M}, n}), \omega_\Psi \rangle  = \int_{\Sh_\H(V)} \xi \wedge \iota_n^*\omega_\Psi, \]
where $\xi = \mathrm{pr}^* \log |u|$ if $\eta = 0$ and a differential form constructed from two modular units if $\eta = 1$.
\end{theorem}

The proof of this results relies on the methods developed in \cite{CLRGSp6}, where we gave a description of Deligne--Beilinson cohomology in terms of complexes of \textit{tempered currents}, i.e. sheaves of continuous linear forms of rapidly decreasing differential forms. The use of usual currents, i.e. continuous linear forms of differential forms defined on the compactification, to calculate Deligne--Beilinson cohomology has already been considered before, cf. \cite{Jannsen} or \cite{BKK}, but this description does not seem to suffice for our purposes. \\

We now rephrase the above result in terms of a Rankin-Selberg integral, using the second Kronecker limit formula. Let $K_{\H}$ denote a maximal compact subgroup of $\H(\R)$ and fix a generator $X_0$ of the highest exterior power of $({\rm Lie} \, \H(\R)/{\rm Lie} \, K_{\H} ) \otimes_{\R} \C$. Moreover, denote by $E(h, s)$ the weight 0 real analytic Eisenstein series on $\GL_2$ appearing in the second Kronecker limit formula (cf. Proposition \ref{KLF}). 

\begin{theorem} \label{IntroTheoA'}
Let $n$ be congruent to $0$ or $3$ mod $4$. We have
\begin{equation*}
\langle r_\mathcal{D}({\rm Eis}_{\mathcal{M}, n}), \omega_\Psi \rangle =  C_U \int_{\H(\Q) \Z_\G(\A) \backslash \H(\A)} E(h_1,0) (A.\Psi)(h) dh,
\end{equation*}
for a certain $A \in \mathcal{U}(\mathfrak{g}_\C)$ (defined precisely in \S \ref{adelicintegralsec}), where $C_{U}$ is a volume factor depending on $U \cap \H$.
\end{theorem}

We also give a formula (Theorem \ref{adelicintegral2}) for the pairing $\langle r_\mathcal{D}({\rm Eis}_{\mathcal{M}, n}), \omega_\Psi \rangle$ in the case where $n \equiv 1,2 \pmod{4}$ as a linear combination of integrals of Rankin-Selberg type involving Eisenstein series on $\GL_2 \times_{\GL_1} \GL_2$ and the restriction of the cusp form $\Psi$ to $\H$. In particular, Theorem \ref{adelicintegral2} for $n=2$ strengthens the results of \cite{lemmarf}, where the second-named author worked in the setting where the weight of the local system is enough regular, hence excluding the case of trivial coefficients.\\

When $n = 2$ or $n = 3$, the integrals on the right hand side of the formula of Theorem \ref{IntroTheoA'} are related to non-critical special values of Spin $L$-functions, cf. \cite{lemmarf} and \cite{CLRGSp6}. For $n \geq 4$, these Rankin-Selberg integrals have not been studied in the literature. As a first step towards relating them to $L$-functions, we show that these integrals for $n=4,5,7$ unfold to certain Fourier coefficients. Fourier coefficients of automorphic forms can be parametrised by unipotent orbits and, for the symplectic group $\GSp_{2n}$, these are in correspondence with partitions $(n_1 \, n_2 \hdots n_k)$ of $2n$ with odd numbers appearing with even multiplicity \cite{GinzburgRallisSoudry}. To such a partition one can associate a Fourier coefficient (cf. Definition \ref{fouriercoeff} and Definition \ref{fouriercoeffwithextraint}). 

\begin{theorem}[Proposition \ref{unfoldingforgsp8}, Proposition \ref{unfoldingforgsp10}, Proposition \ref{unfoldingforgsp14}]
If $n = 4, 5, 7$, the integral of Theorem \ref{IntroTheoA'} unfolds to an adelic integral over a Fourier coefficient of type $(4\, 2\, 1^2)$, $(2^2\, 1^6)$ and $(4 \, 2 \, 1^8)$, respectively.
\end{theorem}

\textbf{Acknowledgements.} We would like to thank David Ginzburg for kindly explaining to us many unfolding techniques. We would like to thank Aaron Pollack for several fruitful discussions. The first named author is supported by the NSERC grant RGPIN-2018-04392 and Concordia Horizon postdoc fellowship n.8009. The first named author's research has also been supported by the European Research Council (ERC) under the European Union's Horizon 2020 research and innovation programme (grant agreement No. 682152). The third named author has received financial support from ERC-2018-COG-818856-HiCoShiVa.

\section{Preliminaries}

In this section, we introduce some notation and state a lemma concerning the existence of certain subgroups of the symplectic groups. We use them to define the motivic cohomology classes we will study later.

\subsection{Groups} \label{section-groups}

Let $\GSp_{2n}$ be the group scheme over $\Z$ whose $R$-points, for any commutative ring $R$ with identity, are described by
\[ \GSp_{2n}(R) = \{ A \in \GL_{2n}(R) \; : \; {}^t A J A = \nu(A) J, \; \nu(A) \in \Gm(R) \}, \] where $J$ is the matrix ${ \matrix 0 {J_n} {-J_n} 0}$, for $J_n$ denoting the $n\times n$ anti-diagonal matrix with diagonal entries equal to 1.

\subsubsection{Subgroups}

Let $F$ be a totally real $\Q$-algebra of dimension $\delta$;
denote by $\GSp_{2m,F}^* /\Q$ the subgroup scheme of ${\rm Res}_{F/\Q} \GSp_{2m,F}$ sitting in the Cartesian diagram 
\[ \xymatrix{ 
\GSp_{2m,F}^* \ar@{^{(}->}[r] \ar[d] & {\rm Res}_{F/\Q} \GSp_{2m,F} \ar[d]^{\nu} \\ 
\mathbf{G}_m  \ar@{^{(}->}[r] & {\rm Res}_{F/\Q} \mathbf{G}_{{\rm m},F}.
}
\]

For instance, when $F=\Q^{\delta}$,
\[\GSp_{2m,F}^*=\GSp_{2m}^{ \boxtimes \delta} =\{ (g_i) \in \GSp_{2m}^\delta \; :\; \nu(g_1) = \cdots = \nu(g_\delta) \}.\]

Consider $F^{2m}$ with its standard $F$-alternating form $\langle \; ,\; \rangle_F$. We fix the standard symplectic $F$-basis $\{ e_1, \hdots , e_m , f_m,  \hdots , f_1  \}$ and define $\langle \; ,\; \rangle_\Q$ to be ${\rm Tr}_{F/\Q} \circ \langle \; ,\; \rangle_F$. Then, by definition $\GSp_{2m,F}^* \subset \GSp(\langle \; ,\; \rangle_\Q)$. Notice that, after opportunely fixing a $\Q$-basis of $F$, $\GSp(\langle \; ,\; \rangle_\Q)$ becomes isomorphic to $\GSp_{2m\delta}$, thus we have an embedding \begin{eqnarray}\label{subemb} \GSp_{2m,F}^* \hookrightarrow \GSp(\langle \; ,\; \rangle_\Q) \simeq \GSp_{2m\delta}. \end{eqnarray}

\begin{example} 
Let $F$ be a real \'etale quadratic extension over $\Q$. Such extensions are parametrised by $a \in \Q^\times_{> 0} / ( \Q^\times _{> 0})^2,$ and we identify $F = \Q \oplus \Q \sqrt{a}$, for a representative $a$ of the corresponding class in $\Q^\times_{> 0} / ( \Q^\times_{> 0} )^2$.
Let $m=1$; we realise the isomorphism $\GSp(\langle \; ,\; \rangle_\Q) \simeq \GSp_{4}$, by choosing the $\Q$-basis of $F^2$ given by 
\[\{ \tfrac{1}{2\sqrt{a}}e_1 , \tfrac{1}{2}e_1 , f_1, \sqrt{a}f_1 \}.\] Indeed, such a basis represents the alternating form $\langle \; ,\; \rangle_\Q$ as given by $J$.
\end{example}

Let $V_n$ be the standard representation of $\GSp_{2n}$ with symplectic basis $\{ e_i, f_j  \}$. Given a partition $(n_i)_{1\leq i \leq t}$ of $n$, we will consider the embedding \begin{eqnarray}\label{block} \GSp_{2n_1} \boxtimes \cdots \boxtimes \GSp_{2n_t} \hookrightarrow \GSp_{2n} \end{eqnarray} induced by the decomposition $V_n=\oplus_{i=1}^t V_{n_i}$, where each $V_{n_i}$ is a vector space of dimension $2n_i$ endowed with symplectic basis $\{e_{s_{i-1}+1}, \hdots ,  e_{s_i}, f_{s_i},  \hdots , f_{s_{i-1}+1} \}$, with $s_k := \sum_{j=1}^{k}n_j$.

\subsubsection{Special embeddings} \label{mod_emb}

We start with the following combinatorial result. 

\begin{lemma} \label{reznick} For almost all integers $n$, there exist $n_1 \leq \hdots \leq n_{k(n)} \in \N$ such that $\sum_{i} n_i = n$ and such that the following is true:
\begin{itemize}
 \item If $n \equiv 0$ or $3$ modulo $4$, then $n_1 = 1$ and $\sum_i \binom{n_i + 1}{2} = \frac{1}{2} \binom{n + 1}{2}$.
 \item If $n \equiv 1$ or $2$ modulo $4$, then $n_1 = n_2 = 1$ and $\sum_i \binom{n_i + 1}{2} = \frac{1}{2} \big( \binom{n + 1}{2} + 1 \big)$.
\end{itemize}
\end{lemma}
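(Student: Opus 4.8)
\emph{Proof strategy.} The plan is to turn the statement into a purely additive question about binomial coefficients and then reduce it, via a ``peeling'' trick, to the classical fact (Gauss's ``Eureka'' theorem) that every non‑negative integer is a sum of three triangular numbers.

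\emph{Reformulation.} For any partition $(n_i)$ of $n$ one has $\binom{n_i+1}{2}=\binom{n_i}{2}+n_i$, hence $\sum_i\binom{n_i+1}{2}=\sum_i\binom{n_i}{2}+n$. Thus the two displayed identities in the lemma are equivalent, respectively, to
\[
\sum_i\binom{n_i}{2}=Q_n,\qquad
Q_n:=\begin{cases}\tfrac14\,n(n-3)&\text{if }n\equiv 0,3\pmod 4,\\[1mm]\tfrac14\,(n-1)(n-2)&\text{if }n\equiv 1,2\pmod 4.\end{cases}
\]
A direct check shows $Q_n$ is a non‑negative integer in each of the four residue classes and that $0\le Q_n<\tfrac14 n^2$. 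Since a part equal to $1$ contributes $\binom{1}{2}=0$ to $\sum_i\binom{n_i}{2}$, the requirements ``$n_1=1$'' and ``$n_1=n_2=1$'' come for free as soon as the partition we build contains at least two parts equal to $1$; so it suffices to produce, for all large $n$, a partition of $n$ with $\sum_i\binom{n_i}{2}=Q_n$ and at least two parts equal to $1$.

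\emph{Peeling and Gauss.} Let $k_0$ be the largest integer with $\binom{k_0}{2}\le Q_n$, and set $D:=Q_n-\binom{k_0}{2}$. By maximality $0\le D<\binom{k_0+1}{2}-\binom{k_0}{2}=k_0$, and from $\binom{k_0}{2}\le Q_n<\tfrac14 n^2$ we get $k_0(k_0-1)<\tfrac12 n^2$, whence $k_0<1+n/\sqrt2$. Now apply Gauss's theorem to $D$: there are $a_1,a_2,a_3\ge0$ with $D=\binom{a_1+1}{2}+\binom{a_2+1}{2}+\binom{a_3+1}{2}$. Since each summand is $\le D<1+n/\sqrt2$, we have $a_i<\sqrt{2+\sqrt2\,n}$, i.e. $a_i=O(\sqrt n)$. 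Discard the indices with $a_j=0$ and put $\ell_j:=a_j+1\ (\ge 2)$ for the rest, so $\sum_j\ell_j=O(\sqrt n)$. Take
\[
\lambda=\bigl(k_0,\ \ell_1,\ell_2,\dots,\ \underbrace{1,\dots,1}_{N_1}\bigr),\qquad N_1:=n-k_0-\textstyle\sum_j\ell_j .
\]
By construction the parts of $\lambda$ sum to $n$ and $\sum_i\binom{n_i}{2}=\binom{k_0}{2}+D=Q_n$, so the only remaining point is $N_1\ge 2$.

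\emph{The estimate, and the one real difficulty.} We have $N_1=n-k_0-\sum_j\ell_j\ge n\bigl(1-\tfrac1{\sqrt2}\bigr)-O(\sqrt n)$, which exceeds $2$ for every sufficiently large $n$; after reordering, such a $\lambda$ has $n_1=n_2=1$ and settles both cases. Hence the lemma holds for all $n$ beyond an explicit bound, i.e. for almost all $n$. (The finitely many small $n$ left aside are exactly where genuine exceptions can occur, e.g. $n=6$ and $n=10$, in accordance with ``almost all''; these can be checked by hand if one wishes.) The only genuine obstacle here is conceptual rather than computational: writing $Q_n\sim n^2/4$ \emph{itself} as a sum of three triangular numbers is useless, since the summands then have size $\sim n$ and the resulting partition overshoots $n$. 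Peeling off the \emph{single} largest triangular number $\binom{k_0}{2}$ first is precisely the device that shrinks the remainder to size $O(n)$, so that Gauss's theorem can only return summands of size $O(\sqrt n)$, which fit comfortably inside $n$; after that everything is elementary bookkeeping.
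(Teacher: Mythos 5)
Your proof is correct, but it takes a genuinely different route from the paper. The paper subtracts off the initial part(s) equal to $1$, converts the binomial identity to the equivalent statement that the remaining parts $n_i$ satisfy both $\sum_i n_i = n-\epsilon_n$ and $\sum_i n_i^2 = \tfrac{1}{2}(n^2-n-2)$, and then simply cites Reznick for the existence of such partitions when $n$ is large. You instead rewrite the constraint as $\sum_i \binom{n_i}{2} = Q_n$ with $Q_n$ explicitly $n(n-3)/4$ or $(n-1)(n-2)/4$, and then give a self-contained construction: peel off the single largest triangular number $\binom{k_0}{2} \le Q_n$, apply Gauss's three-triangular-numbers theorem to the remainder $D < k_0$ (which is now small enough that the resulting parts are $O(\sqrt{n})$), and pad with parts equal to $1$. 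The peeling step is exactly what makes Gauss usable here, and your estimates $k_0 < 1 + n/\sqrt{2}$ and $\sum_j \ell_j = O(\sqrt n)$ correctly give $N_1 \ge n(1-1/\sqrt2) - O(\sqrt n) \ge 2$ for all sufficiently large $n$, so the "almost all $n$" conclusion holds. What you gain is an explicit, elementary proof that does not delegate to an external reference; what the paper gains is brevity. (Minor point: for $\epsilon_n = 1$ you only need a single part equal to $1$, but producing two is harmless since extra $1$'s are permitted by the statement.)
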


\begin{proof}
Let $n \equiv 0$ or $3$ modulo $4$, then we need to find $n_i$'s such that $1 + \sum_i n_i = n$ and such that $1+\sum_i \binom{n_i + 1}{2} = \frac{1}{2} \binom{n + 1}{2}$. Using the first condition, the second one boils down to
\[ \sum_i n_i^2 = \frac{n^2 + n}{2} - \sum_i n_i -2 = \frac{(n - 1)^2 + (n - 1) - 2}{2}. \]
Analogously, if $n \equiv 1$ or $2$ modulo $4$, then we want to find $n_i$'s such that $1 + 1 + \sum_i n_i = n$ and the second condition in the lemma becomes
\[ \sum_i n_i^2 = \frac{(n - 2)^2 + 3 (n - 2)}{2}. \]
The existence of $n_i$'s satisfying these conditions follows from \cite{Reznick} for $n$ large enough.
\end{proof}

Let $n$ be an integer as in the lemma above and let $p_n=(n_i)_i$ be a partition of $n$ given by the lemma. Note that $p_n$ is not necessarily unique. We agree that $n_1 \leq n_2 \leq \ldots \leq n_{k(n)}$. We set $\epsilon_n = 1$ (resp. $2$) if $n \equiv 0$ or $3$ (resp. $1$ or $2$) modulo $4$. Let us consider the sequence of subsets of $p_n$ defined inductively by
\begin{eqnarray*}
X_1 &=& \left\{ \begin{array}{ll}
        \{n_1\} & \mbox{if } \epsilon_n=1 \\
        \{n_1, n_2\} & \mbox{if } \epsilon_n=2\\
    \end{array}
    \right.\\
X_{s} &=& \left\{n_i\in p_n\big|n_i=\min\left\{p_n-\bigcup_{j=1}^{s-1}X_{j}\right\}\right\}, \mbox{for } s \geq 2.
\end{eqnarray*}
For any $i \geq 2$, let $\delta_i = |X_i|$, $F_s$ denote a totally real \'etale $\Q$-algebra of dimension $\delta_s$ and let $m_i$ denote the common value of the elements of $X_i$. By composing the maps of \eqref{subemb} and \eqref{block}, we construct the embedding
\[ \iota_n: \H_n := \GSp_{2} \boxtimes \GSp_{2m_2, F_2}^*  \hdots \boxtimes \GSp_{2 m_t, F_{t}}^* \hookrightarrow \GSp_{2n} =: \G_n \]
if $\epsilon_n=1$ and the embedding
\[ 
\iota_n: \H_n := \GSp_{2} \boxtimes \GSp_2 \boxtimes \GSp_{2m_3, F_3}^*  \hdots \boxtimes \GSp_{2 m_t, F_{t}}^* \hookrightarrow \G_n 
\]
if $\epsilon_n=2$. 

\begin{remark}
  Some examples of embeddings $\iota_n$ for small values of $n$ are:
  \begin{itemize}
   \item $\GSp_2 \boxtimes \GSp_2 \hookrightarrow \GSp_4$,
   \item $\GSp_2 \boxtimes \GSp_{2, F}^* \hookrightarrow \GSp_6$,
   \item $\GSp_2 \boxtimes \GSp_2 \boxtimes \GSp_4 \hookrightarrow \GSp_8$,
   \item $\GSp_2 \boxtimes \GSp_2 \boxtimes \GSp_6 \hookrightarrow \GSp_{10}$,
   \item $\GSp_2 \boxtimes \GSp_4 \boxtimes \GSp_8 \hookrightarrow \GSp_{14}$.
  \end{itemize}
  The first one was the one used by \cite{lemmarf} and \cite{LSZ1} and the second one was considered in \cite{CLRGSp6} and \cite{GSp6paper1}. We also remark that there is no partition satisfying the condition of Lemma \ref{reznick} for $n = 6, 9, 10, 13, 16, 17, 26, 33$ (and these are most probably all the exceptions of Lemma \ref{reznick}).
\end{remark}

\subsection{Shimura varieties}
We keep the notations of the previous section. In particular, we let $p_n$ be a partition of $n$ as in Lemma \ref{reznick}.
Let $\mathbf{S}={\rm Res}_{\C/\R}{\Gm}_{/\C}$ be the Deligne torus. After identifying $\GSp_{2m_s,F_s}^*/\R$ with $\GSp_{2m_s/\R}^{\boxtimes \delta_s}$, denote by $X_{\H_n}$ the $\H_n(\R)$-conjugacy class of \[h:\mathbf{S} \longrightarrow {\H_n^{\epsilon_n}}_{/\R}, \quad x+iy \mapsto \left( { \matrix {x} {y } {-y} {x} }, \hdots , { \matrix {x I_{m_i}} {y J_{m_i}} {-y J_{m_i}} {x I_{m_i}}, \hdots }, { \matrix {x I_{m_t}} {y J_{m_t}} {-y J_{m_t}} {x I_{m_t}} } \right). \] The pair $(\H_n,X_{\H_n})$ defines a Shimura datum of reflex field $\Q$.
Denote by $\Sh_{\H_n}$ the corresponding Shimura variety of dimension $\sum_i \binom{n_i + 1}{2}$. 

If $V \subseteq \H_n(\Af)$ is a fibre product (over the similitude characters) $V_1 \times_{\Af^\times} \cdots  \times_{\Af^\times} V_t $ of sufficiently small subgroups, we have
\[ \Sh_{\H_n}(V) = \Sh_{\GL_2}(V_1) \times_{\Gm} \cdots \times_{\Gm} \Sh_{\GSp_{2m_t,F_t}^*}(V_t), \]
where $\times_{\Gm}$ denotes the fibre product over the zero dimensional Shimura variety of level $D = {\rm det}(V_1) = \hdots = \nu(V_t)$ 
\[\pi_0(\Sh_{\GL_2})(D)=\hat{\Z}^\times/D.\]
We recall the reader that the complex points of $\Sh_{\H_n}(V)$ are given by
\[ \Sh_{\H_n}(V)(\C) = \H_n(\Q) \backslash \H_n(\A) / \Z_{\H_n}(\R) K_{\H_n, \infty} V, \]
where $\Z_{\H_n}$ denotes the center of $\H_n$ and $K_{\H_n, \infty} \subseteq \H_n(\R)$ is the maximal compact defined as the product $\mathrm{U}(1) \times \mathrm{U}(m_2)^{\delta_2} \ldots \times \mathrm{U}(m_t)^{\delta _t}$ if $\epsilon_n=1$ and defined as the product $\mathrm{U}(1) \times \mathrm{U}(1) \times \mathrm{U}(m_3)^{\delta_3} \ldots \times \mathrm{U}(m_t)^{\delta _t}$

Notice that the embedding $\iota_n : \H_n \to \G_n$ induces another Shimura datum $(\G_n,X_{\G_n})$ of reflex field $\Q$. 
For any neat open compact subgroup $U$ of $\G_n(\Af)$, denote by $\Sh_{\G_n}(U)$ the associated Shimura variety of dimension $d_n : = \tfrac{n(n+1)}{2}$.
We also write $\iota_n: \Sh_{\H_n}(U \cap \H_n) \hookrightarrow \Sh_{\G_n}(U)$ the closed embedding of codimension $c_n = d_n - \sum_i \binom{n_i + 1}{2} = \frac{1}{2}( d_n +1 - \epsilon_n)$ induced by the group homomorphism $\iota_n: \H_n \hookrightarrow \G_n$.

\subsection{Motivic cohomology classes for $\GSp_{2n}$}\label{Constructiongeneral}

\subsubsection{Modular units and Eisenstein series}\label{sectionmodularunits}
The inputs of our construction are the modular units already considered by Beilinson and Kato, which are related to real analytic Eisenstein series by the Kronecker limit formula.

Let $\mathbf{T}_2$ denote the diagonal maximal torus of $\GSp_2=\GL_2$ and let $\mathbf{B}_2$ denote the standard Borel. Define the algebraic character $\lambda: \mathbf{T}_2 \rightarrow \mathbf{G}_m$ by $\lambda(\mathrm{diag}(t_1,t_2))=t_1/t_2$.
Let $\Sc(\A^2)$ denote the space of Schwartz-Bruhat functions $\Phi$ on $\A^2$. Given $\Phi \in \Sc(\A^2)$, denote by \[f(g,\Phi,s):= |\operatorname{det}(g)|^s \int_{\GL_1(\A)} \Phi((0,t)g)|t|^{2s}d^\times t  \]
the normalised Siegel section in $\operatorname{Ind}_{\B_2(\A)}^{\GL_2( \A)}(|\lambda|^s)$ and define the associated Eisenstein series \begin{eqnarray}\label{eisensteinseriesgl2} {\rm E}(g,\Phi,s) := \sum_{\gamma \in \mathbf{B}_2(\Q) \backslash \GL_2(\Q)} f(\gamma g,\Phi ,s).  \end{eqnarray}

Fix the Schwartz-Bruhat function $\Phi_\infty$ on $\R^2$ defined by $(x,y) \mapsto e^{- \pi(x^2 + y^2)}$ and, for each $\overline{\Q}$-valued function $\Phi_f \in \Sc(\Af^2,\overline{\Q})$, the smallest positive integer $N_{\Phi_f}$ such that $\Phi_f$ is constant modulo $N_{\Phi_f}\widehat{\Z}^2$. Finally, denote $ \Sc_0(\Af^2,\overline{\Q})\subset \Sc(\Af^2,\overline{\Q})$ the space of elements $\Phi_f$ such that $\Phi_f((0,0))=0$. 

We now state the following (classical) result, which relates modular units to values of the adelic Eisenstein series defined in \eqref{eisensteinseriesgl2}.
 
\begin{proposition} \label{KLF} Let $\Phi_f \in \Sc_0(\Af^2,\overline{\Q})$ with $N_{\Phi_f} \geq 3$, then there exists $$
u(\Phi_f) \in \mathcal{O}(\Sh_{\GL_2}(K(N_{\Phi_f})))^\times \otimes \overline{\Q}$$ such that for any $g \in \GL_2(\A)$ we have
\[E(g,\Phi,s)={\rm log}|u(\Phi_f)(g)| + O(s), \] where $\Phi=\Phi_{\infty} \otimes \Phi_f$.
\end{proposition}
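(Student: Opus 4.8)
The plan is to reduce the statement to the classical second Kronecker limit formula, realized adelically. The key point is that for $\Phi_f \in \Sc_0(\Af^2, \overline{\Q})$ the Eisenstein series $E(g, \Phi, s)$ has, at $s = 0$, no pole (the vanishing condition $\Phi_f(0,0) = 0$ kills the constant term responsible for the pole) and its value is, up to the expected normalisations, the logarithm of the absolute value of a modular unit. First I would spell out, for a suitable open compact $K(N)$ with $N = N_{\Phi_f}$, the dictionary between $\overline{\Q}$-valued Schwartz functions $\Phi_f$ supported away from $0$ and Siegel units on the modular curve $\Sh_{\GL_2}(K(N))$: writing $\Phi_f$ as a $\overline{\Q}$-linear combination of characteristic functions of cosets $v + N\widehat{\Z}^2$ with $v \in (N^{-1}\Z^2 / \Z^2) \setminus \{0\}$, each such coset corresponds to a torsion section of the universal elliptic curve, hence to a Siegel unit $g_v$, and one sets $u(\Phi_f)$ to be the corresponding $\overline{\Q}$-linear combination in $\mathcal{O}(\Sh_{\GL_2}(K(N)))^\times \otimes \overline{\Q}$. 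The condition $N \geq 3$ guarantees that $K(N)$ is neat, so that $\Sh_{\GL_2}(K(N))$ is a fine moduli space and the Siegel units are genuinely defined there.

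Next I would identify the adelic Eisenstein series with a classical (nonholomorphic) one on the upper half plane. Using the Iwasawa decomposition $\GL_2(\A) = \B_2(\Q) \GL_2(\R)^+ K(N) \cdot (\text{centre})$ and strong approximation, the value $E(g, \Phi, s)$ at a point $g$ whose archimedean component corresponds to $\tau \in \mathfrak{H}$ unwinds to a finite $\overline{\Q}$-linear combination of the real-analytic Eisenstein series $E_v(\tau, s) = \sum_{(m_1,m_2) \equiv v} \frac{y^s}{|m_1 \tau + m_2|^{2s}}$ attached to the cosets $v$ occurring in $\Phi_f$; the archimedean Gaussian $\Phi_\infty(x,y) = e^{-\pi(x^2+y^2)}$ is precisely what produces this shape after carrying out the $d^\times t$-integral and the $\gamma$-sum (the Gaussian integral against $|t|^{2s}$ gives the right $\Gamma$-factor and $\pi^{-s}$, and these are absorbed into the $O(s)$). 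I would then invoke the second Kronecker limit formula in the form: $E_v(\tau, s) = -\log|g_v(\tau)| \cdot (\text{nonzero constant}) + O(s)$ near $s = 0$, where $g_v$ is the Siegel unit normalized so that $\log\|g_v\|$ (the logarithm of the Siegel--Klein form) is the relevant value; summing the combination gives $E(g,\Phi,s) = \log|u(\Phi_f)(g)| + O(s)$ with $u(\Phi_f)$ the unit built in the previous step. Care is needed that the normalisation of the Siegel section (the $|\det g|^s$ factor and the $|t|^{2s}$ in the $t$-integral) matches the normalisation of the classical $E_v$ so that the proportionality constant is exactly $1$, or is rescaled into $u(\Phi_f)$; tracking this constant cleanly is the main bookkeeping obstacle, and I expect this is where the argument in \emph{loc. cit.} (Beilinson--Kato, Kato's Euler system paper, or the references already cited) is simply quoted.

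Finally I would check $\GL_2(\A)$-equivariance/consistency: the identity is asserted for all $g \in \GL_2(\A)$, so one verifies that both sides transform the same way under $g \mapsto \gamma g u$ for $\gamma \in \GL_2(\Q)$ and $u$ in the level group, which for the left side is automatic from the definition of the Eisenstein series and for the right side follows from $u(\Phi_f)$ being a genuine function on the Shimura variety of that level (together with the compatible change $\Phi_f \mapsto$ its translate changing $u(\Phi_f)$ accordingly). The statement that $u(\Phi_f)$ lands in the \emph{units} $\mathcal{O}^\times \otimes \overline{\Q}$ rather than merely in rational functions is exactly the content that Siegel units have divisor supported on the cusps, which is classical; since $\Phi_f(0,0) = 0$ there is no contribution from the identity section and the combination is a bona fide unit. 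The genuinely hard analytic input — holomorphy at $s=0$ and the precise constant — is entirely classical and I would not reprove it; the new content here is only the adelic/Schwartz-function packaging, which is routine.
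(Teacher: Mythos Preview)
The paper's own proof is a one-line citation to \cite[Corollary 5.6]{PollackShahU21} (with $\nu_1$ the trivial character); your proposal is a correct unpacking of the standard argument behind that reference --- decompose $\Phi_f$ into characteristic functions of cosets, match each to a Siegel unit, and reduce the adelic Eisenstein series to classical real-analytic ones so that the classical second Kronecker limit formula applies.

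One minor bookkeeping point: your strong-approximation step ``$\GL_2(\A) = \B_2(\Q)\GL_2(\R)^+K(N)\cdot(\text{centre})$'' tacitly treats $\Sh_{\GL_2}(K(N))$ as connected, whereas it has $|(\Z/N\Z)^\times|$ components. The Example immediately following the Proposition in the paper shows this matters for the precise shape of $u(\Phi_f)$: the single coset $(0,1)+N\widehat{\Z}^2$ produces not the single Siegel unit $g_{0,1/N}$ but the product $\prod_{b \in (\Z/N\Z)^\times} g_{0,b/N}^{\varphi(N)}$, because the unit must be specified component by component. This does not affect the correctness of your strategy, only the exact dictionary ``coset $\leftrightarrow$ unit''.
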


\begin{proof}
This is the second statement of \cite[Corollary 5.6]{PollackShahU21}, where $\nu_1$ is taken to be the trivial character.
\end{proof}

\begin{example}

As explained in \cite[Example 2.2]{CLRGSp6}, when $\Phi_f= {\rm char}((0,1)+N\widehat{\Z}^2)$ for $N \geq 4$, the corresponding $u(\Phi_f) \in \O(\Sh_{\GL_2}(K(N)))^\times \otimes \Q$ is given by $\prod_{b \in ( \Z/N\Z)^\times} g_{0, b /N}^{\varphi(N)}$, where $g_{0, \star/N}$ is the  Siegel unit as in \cite[\S 1.4]{Kato}. 

\end{example}

\subsubsection{The construction}   Let
$$\mathrm{Eis}_\mathcal{M}: \mathcal{S}_0(\A_f, \overline{\Q}) \rightarrow H^1_\mathcal{M}(\Sh_{\GL_2}, \overline{\Q}(1)) \simeq \mathcal{O}(\Sh_{\GL_2})^\times \otimes_{\Z} \overline{\Q}$$ be the $\GL_2(\A_f)$-equivariant map define by $\Phi_f \mapsto u(\Phi_f)$, where $H^1_\mathcal{M}(\Sh_{\GL_2}, \overline{\Q}(1))$ denotes $\underrightarrow{\lim}_{V} H^1_\mathcal{M}(\Sh_{\GL_2}(V), \overline{\Q}(1))$ and $\mathcal{O}(\Sh_{\GL_2})^\times \otimes_{\Z} \overline{\Q}$ denotes $\underrightarrow{\lim}_{V} (\mathcal{O}(\Sh_{\GL_2}(V))^\times \otimes_{\Z} \overline{\Q})$, the limits being taken over all neat compact open subgroups $V \subset \GL_2(\A_f)$.
 
In the case $\epsilon_n=1$, let $$V_1 \subset \GSp_2(\A_f), V_2 \subset \GSp_{2m_2, F_2}^*(\A_f), \ldots, V_t \subset \GSp_{2m_t, F_t}^*(\A_f)$$
denote neat compact open subgroups. If $\epsilon_n=2$ we make a similar choice, adapting the notation in an obvious way. We assume that the images of the $V_s$ by the similtude characters are the same. Taking the fiber products over the similitude character, we obtain a compact open subgroup $V=V_1 \times_{\A_f^\times} \ldots \times_{\A_f^\times} V_s$ of $ \H_n(\A_f)$. Let $U \subset \G_n(\A_f)$ be a neat compact open subgroup such that the embedding $\iota_n$ induces a closed embedding $\Sh_{\H_n}(V) \hookrightarrow \Sh_{\G_n}(U)$. The Shimura variety $\Sh_{\H_n}(V)$ is of codimension $c_n = d_n -\sum_i \binom{n_i+1}{2}=\frac{1}{2}(d_n+1-\epsilon_n)$ in $\Sh_{\G_n}(U)$. As a consequence, we have an induced map on motivic cohomology 
\[ \iota_{n\,*}: H^{\epsilon_n}_{\mathcal{M}} \big(\Sh_{\H_n}(V), \overline{\Q}(\epsilon_n) \big) \to H^{d_n + 1}_{\mathcal{M}}\big( \Sh_{\G_n}(U), \overline{\Q}(t_n) \big) \] 
where $t_n=\epsilon_n+c_n$. For any $n$, the projection on the first factor of $\Sh_{\H_n}(V)$ is a morphism $p_1: \Sh_{\H_n}(V) \rightarrow \Sh_{\GL_2}(V_1)$. Hence when $n$ is such that $\epsilon_n=1$, we have the sequence of morphisms
$$
\begin{CD}
\mathcal{S}_0(\A_f, \overline{\Q})^{V_1} @>\mathrm{Eis}_\mathcal{M}>> H^1_\mathcal{M}(\Sh_{\GL_2}(V_1), \overline{\Q}(1))\\
@>p_1^*>> H^1_{\mathcal{M}} \big(\Sh_{\H_n}(V), \overline{\Q}(1) \big)\\
@>\iota_{n\,*}>> H^{d_n + 1}_{\mathcal{M}}\big( \Sh_{\G_n}(U), \overline{\Q}(t_n) \big).
\end{CD}
$$

\begin{definition} \label{eis1}
We define
$
\mathrm{Eis}_{\mathcal{M}, n}: \mathcal{S}_0(\A_f, \overline{\Q})^{V_1} \rightarrow H^{d_n + 1}_{\mathcal{M}}\big( \Sh_{\G_n}(U), \overline{\Q}(t_n) \big)
$
to be the composite of these morphisms.
\end{definition}

When $\epsilon_n=2$, the projection on the second factor of $\Sh_{H_n}(V)$ is also a morphism $p_2: \Sh_{\H_n}(V) \rightarrow \Sh_{\GL_2}(V_1)$. Hence when $\epsilon_n=2$, we have the sequence of morphisms
$$
\begin{CD}
\mathcal{S}_0(\A_f, \overline{\Q})^{V_1} \otimes_{\overline{\Q}} \mathcal{S}_0(\A_f, \overline{\Q})^{V_2} @>\mathrm{Eis}_\mathcal{M} \otimes \mathrm{Eis}_\mathcal{M}>> H^1_\mathcal{M}(\Sh_{\GL_2}(V_1), \overline{\Q}(1)) \otimes H^1_\mathcal{M}(\Sh_{\GL_2}(V_2), \overline{\Q}(1))\\
 @>p_1^* \otimes p_2^*>> H^1_{\mathcal{M}} \big(\Sh_{\H_n}(V), \overline{\Q}(1) \big) \otimes H^1_{\mathcal{M}} \big(\Sh_{\H_n}(V), \overline{\Q}(1) \big) \\
@>\cup>> H^2_{\mathcal{M}} \big(\Sh_{\H_n}(V), \overline{\Q}(2) \big)\\
@>\iota_{n\,*}>> H^{d_n + 1}_{\mathcal{M}}\big( \Sh_{\G_n}(U), \overline{\Q}(t_n) \big),
\end{CD}
$$
where the third morphism is the cup-product in motivic cohomology. 

\begin{definition} \label{eis2}
When $\epsilon_n = 2$, we define
\[\mathrm{Eis}_{\mathcal{M}, n}: \mathcal{S}_0(\A_f, \overline{\Q})^{V_1} \otimes_{\overline{\Q}} \mathcal{S}_0(\A_f, \overline{\Q})^{V_2} \rightarrow H^{d_n + 1}_{\mathcal{M}}\big( \Sh_{\G_n}(U), \overline{\Q}(t_n) \big) \] to be the composite of these morphisms. 
\end{definition}

\begin{remark} The notation $\mathrm{Eis}_{\mathcal{M}, n}$ is slightly abusive as these morphisms depend also on $U$, $V$ and the data entering in the definition of $\iota_n$.
\end{remark}

In the rest of the paper, when no confusion arises, we simplify our notation by identifying $\H_n, \G_n, \epsilon_n, d_n$ and $t_n$ with $\H,\G, \epsilon ,  d$ and $t$.  

\section{Archimedean regulators}

We now turn to the study of the classes constructed in Definition \ref{eis1} and Definition \ref{eis2}. We first construct a harmonic differential form associated to a cuspidal automorphic representation and then we explain how one use this differential form to define a natural linear form on Deligne--Beilinson cohomology and calculate the image of the Deligne realisations of the motivic cohomology classes.

\subsection{Representation theory}

\subsubsection{Cartan decomposition and root system}\label{cartandec} Let $K_\infty$ be the maximal compact subgroup of $\G_0(\R)$, where $\G_0:=\Sp_{2n}$, which fixes the point $i J \in X_{\G}$. We have an isomorphism $\kappa :U(n) \simeq K_\infty$. Moreover, one has a Cartan decomposition \[ \mathfrak{g}_{0,\C}=\mathfrak{k}_{\C} \oplus \mathfrak{p}^+_{\C} \oplus \mathfrak{p}^-_{\C}, \]
where $\mathfrak{k} = {\rm Lie}(K_\infty)$ and $\mathfrak{p}^+_{\C}$, resp. $\mathfrak{p}^-_{\C}$, is the holomorphic, resp. anti-holomorphic, tangent space of $X_\G$ at $i J \in X_{\G}$. 

Let $T_\infty \subset K_\infty$ denote $\{\kappa(\mathrm{diag}(z_1, z_2, \ldots, z_n)), z_i \in \mathrm{U}(1)\}$. Then $T_\infty$ is Cartan subgroup of $K_\infty$ with Lie algebra $\mathfrak{h} \subset \mathfrak{k}$. Then $\mathfrak{h}$ is a compact Cartan subalgebra of $\mathfrak{g}_{0}$. Fix a basis $(e_j)_j$ of the dual $\mathfrak{h}^*_\C$. A system of positive roots for $(\mathfrak{g}_{0,\C}, \mathfrak{h}_\C)$ is then given by
\begin{eqnarray*}
2 e_j, &\;& 1 \leq j \leq n, \\
e_j + e_k, &\;& 1 \leq j < k \leq n, \\
e_j - e_k, &\;& 1 \leq j < k \leq n.
\end{eqnarray*}
The simple roots are $e_1 - e_2, \hdots e_{n - 1} - e_n, 2 e_n$. We note that $\mathfrak{p}^+_{\C}$ is spanned by the root spaces corresponding to the positive roots of type $2 e_j$ and $e_j+e_k$. We denote $\Delta = \{ \pm 2 e_j, \pm (e_j \pm e_k) \}$ the set of all roots, $\Delta_\mathrm{c} = \{ \pm(e_j - e_k)\}$ the set of compact roots and $\Delta_{\rm nc} = \Delta - \Delta_{\rm c}$ the non-compact roots. Finally, we note $\Delta^+, \Delta_{\rm c}^+$ and $\Delta_{\rm nc}^+$ the set of positive, positive compact and positive non-compact roots, respectively. We denote by $X_\alpha$ a root vector for any given root $\alpha$. Then, we have that $\mathfrak{p}^{\pm}_{\C}=\bigoplus_{\alpha \in \Delta_{\rm nc}^+} \C X_{\pm \alpha} $.

\subsubsection{Weyl groups} \label{weylgroups} Recall that the Weyl group of $\G_0$ is given by $\mathfrak{W}_{\G_0} = \{ \pm 1\}^n \rtimes \mathfrak{S}_n$. The reflection $\sigma_j$ in the orthogonal hyperplane of $2 e_j$ simply reverses the sign of $e_j$ while leaving the other $e_k$ fixed. The reflection $\sigma_{jk}$ in the orthogonal hyperplane of $e_j - e_k$ exchanges $e_j$ and $e_k$ and leaves the remaining $e_\ell$ fixed. The Weyl group $\mathfrak{W}_{K_\infty}$ of $K_\infty \cong U(n)$ is isomorphic to $\mathfrak{S}_n$ and, via the embedding into $\G_0$, identifies with the subgroup of $\mathfrak{W}_{\G_0}$ generated by the $\sigma_{jk}$.

\subsubsection{$K_\infty$-types}\label{kinftypes}

We previously defined the maximal compact subgroup $\kappa :U(n) \simeq K_\infty$ of $\G_0(\R)$, with Lie algebra $\mathfrak{k}$ and the Cartan subgroup  $T_\infty$ of $K_\infty$ with Lie algebra $\mathfrak{h} \subset \mathfrak{k}$. Its group of algebraic characters is isomorphic to $\Z^n$ via $(k_1, k_2,\ldots, k_n) \mapsto \lambda(k_1, k_2,\ldots, k_n)$, where
\[ \lambda(k_1, k_2,\ldots, k_n) : \kappa(\mathrm{diag}(z_1, z_2, \ldots, z_n)) \mapsto z_1^{k_1} z_2^{k_2} \cdots  z_n^{k_n}. \]
An algebraic character is dominant for our choice of $\Delta_{\rm c}^+$ if $k_1 \geq k_2 \geq \cdots \geq k_n$.  For any dominant integral weight $\lambda$, there exists a unique (up to isomorphism) irreducible representation $\tau_{\lambda}$ of $K_\infty$ in a finite dimensional $\C$-vector space of highest weight $\lambda$ and every irreducible representation of $K_\infty$ is obtained in this way (up to isomorphism). In what follows, we will denote the irreducible representation of highest weight $\lambda(k_1, k_2,\ldots, k_n) $ by $\tau_{(k_1, \cdots, k_n)}$.

\subsection{Test vectors}
\subsubsection{Discrete series \emph{L}-packets} \label{discreteseries}

We recall some standard facts on discrete series. For any non-singular weight $\Lambda \in \Delta$, define
\[ \Delta^+(\Lambda) := \{ \alpha \in \Delta \; : \; \langle \alpha, \Lambda \rangle > 0 \}, \;\;\; \Delta^+_c(\Lambda) = \Delta^+(\Lambda) \cap \Delta_c, \]
where $\langle \;,\;\rangle$ is the standard scalar product on $\R^n$.

Let $\lambda$ be a dominant weight for $\G_0$ (with respect to the compact torus $T_\infty$) and let $\rho = \frac{1}{2} \sum_{\alpha \in \Delta^+} \alpha = (n, n-1, \hdots, 1)$. As $| \mathfrak{W}_{\G_0} / \mathfrak{W}_{K_\infty}| = 2^n$, the set of equivalence classes of irreducible discrete series representations of $\G_0(\R)$ with Harish-Chandra parameter $\lambda + \rho$ contains $2^n$ elements. More precisely, let us choose representatives $\{w_1, \ldots, w_{2^n}\}$ of $\mathfrak{W}_{\G_0} / \mathfrak{W}_{K_\infty}$ of increasing length and such that for any $1 \leq i \leq 2^n$, the weight $w_i (\lambda + \rho)$ is dominant for $K_\infty$. Then for any $1 \leq i \leq 2^n$ there exists an irreducible discrete series $\pi_\infty^{\Lambda}$, where $\Lambda = w_i(\lambda + \rho)$, of Harish-Chandra parameter $\Lambda$ and containing with multiplicity $1$ the minimal $K_\infty$-type with highest weight $ \Lambda + \delta_{\G_0} - 2\delta_{K_\infty}$ where $\delta_{\G_0}$, resp. $\delta_{K_\infty}$, is the half-sum of roots, resp. of compact roots, which are positive with respect to the Weyl chamber in which $\Lambda$ lies, i.e.,
\[ 2 \delta_{\G_0} := \sum_{\alpha \in \Delta^+(\Lambda)} \alpha, \;\;\; 2 \delta_{K_\infty} := \sum_{\alpha \in \Delta^+_c(\Lambda)} \alpha. \]
Moreover, for $i \neq j$, $\Lambda = w_i (\lambda + \rho)$, $\Lambda' = w_j(\lambda + \rho)$, the representations $\pi_\infty^{\Lambda}$ and $\pi_\infty^{\Lambda'}$ are not equivalent and any discrete series of $\G_0$ is obtained in this way (\cite[Theorem 9.20]{knapp}). We define the discrete series $L$-packet $P(V^\lambda)$ associated to $\lambda$ to be the set of isomorphism classes of discrete series of $\G_0(\R)$ whose Harish-Chandra parameter is of the form $\Lambda = w_i(\lambda + \rho)$, for some $1 \leq i \leq 2^n$.

The discrete series $L$-packets for  $\G(\R)$ can be described similarly, but the set of its Harish-Chandra parameters changes slightly. This is due to the fact that its maximal compact subgroup has two connected components and the set of parameters has to be considered up to the action of $\mathfrak{W}_{K_\infty}$ and of $w_{2^n}$. Indeed, the Weyl element $w_{2^n}$, which is the anti-diagonal matrix with all entries $-1$, now belongs to the connected component away from the identity of the maximal compact subgroup. This translates into identifying any parameter  $( \lambda_1, \lambda_2, \ldots , \lambda_n )$ with $( -\lambda_n, -\lambda_{n-1}, \ldots , -\lambda_1 )$.  Let $\Pi^{triv}$ denote the discrete series $L$-packet for $\G(\R)$ associated to the trivial representation. It's a set with $2^{n-1}$ elements and if $\pi_\infty \in \Pi^{triv}$ is a discrete series for $\G(\R)$, then its restriction to $\G_0(\R)$ decomposes as $\pi_\infty^1 \oplus \pi_\infty^2$, where $\pi_\infty^i$ is a discrete series for $\G_0(\R)$ in the $L$-packet associated to the trivial representation, and they are both conjugate one to the other.

\subsubsection{Lie algebra cohomology}

Let $A_\G = \R^\star_+$ denote the identity component of the center of $\G(\R)$ and let $K_\G =A_\G K_\infty \subset \G(\R)$. The embedding $\mathfrak{g}_{0,\C}\subset \mathfrak{g}_\C$ induces an isomorphism \[\mathfrak{g}_{0,\C}/ \mathfrak{k}_\C \simeq  \mathfrak{g}_\C / ({\rm Lie}(K_\G))_\C.\]  
By \cite[II. Proposition 3.1]{BorelWallach}, for any discrete series $\pi_\infty \in \Pi^{triv}$, we have
\[ H^d(\mathfrak{g}, K_\G; \pi_\infty) = {\rm Hom}_{K_\infty}(\bigwedge^d \mathfrak{g}_{0,\C}/ \mathfrak{k}_\C, \pi_\infty), \]
where $d=\frac{n(n+1)}{2}$. Using the decomposition ${\pi_{\infty}}_{|_{\G_0(\R)}}=\pi_\infty^1 \oplus \pi_\infty^2$, we further have \[H^d(\mathfrak{g}, K_\G; \pi_\infty) = {\rm Hom}_{K_\infty}(\bigwedge^d \mathfrak{g}_{0,\C}/ \mathfrak{k}_\C, \pi_\infty^1) \oplus {\rm Hom}_{K_\infty}(\bigwedge^d \mathfrak{g}_{0,\C}/ \mathfrak{k}_\C, \pi_\infty^2). \]
By \cite[Theorem II.5.3]{BorelWallach}, each space \[ {\rm Hom}_{K_\infty}\left(\bigwedge^d \mathfrak{g}_{0, \C}/ \mathfrak{k}_{\C}, \pi_\infty^{i}\right)\]
has dimension 1.
This is a consequence of the fact (cf. the proof of \cite[Theorem II.5.3]{BorelWallach}) that the minimal $K_\infty$-type of $\pi_\infty^{i}$ appears uniquely in $\bigwedge^d \mathfrak{g}_{0, \C}/ \mathfrak{k}_{\C}$. Thus, for any $\pi_\infty \in \Pi^{triv}$, $ H^d(\mathfrak{g}, K_\G; \pi_\infty)$ has dimension 2. By using the Cartan decomposition, we get \[\bigwedge^d \mathfrak{g}_{0,\C}/ \mathfrak{k}_\C = \bigoplus_{p+q=d} \bigwedge^p \mathfrak{p}^+_{\C} \otimes_\C  \bigwedge^q \mathfrak{p}^-_{\C}.\] 
Hence, there exists a unique pair $(p_i,q_i)$ such that
$\Hom_{K_\infty}\left( \bigwedge^{p_i} \mathfrak{p}_{\C}^+ \otimes \bigwedge^{q_i} \mathfrak{p}_{\C}^-, \pi_\infty^i \right)$ is non-zero
and hence of dimension one. Concretely, $p_i$ (resp. $q_i$) is the number of positive non-compact roots in $\Delta^+(\Lambda)$ (resp. $\Delta^-(\Lambda)$), where $\Lambda$ is the Harish-Chandra parameter of $\pi_\infty^i$. We call such a pair $(p_i,q_i)$  the Hodge type of $\pi_\infty^i$. Since $\pi_\infty^2 \simeq \overline{\pi}_\infty^1$, we have that $(p_1,q_1)=(q_2,p_2)$ and therefore we can associate to $\pi_\infty \in \Pi^{triv}$ the Hodge type $(p_1,q_1)$ (counted up to complex conjugation).

\subsubsection{Test vectors}

Let $\pi=\pi_\infty \otimes \pi_f$ be a cuspidal automorphic representation of $\G(\A)$ with trivial central character and with archimedean component in the discrete series $L$-packet $\Pi^{triv}$.

\begin{lemma} \label{testvector} Let $p,q \geq 0$ be two integers such that $p+q=d$. Suppose that $\pi_\infty|_{\G_0(\R)} \simeq \pi_\infty^1 \oplus \overline{\pi}_\infty^1$ with $\Hom_{K_\infty}\left(\bigwedge^{p} \mathfrak{p}^+_{\C} \otimes_\C \bigwedge^{q} \mathfrak{p}^-_{\C}, \pi_\infty^1 \right) \neq 0$ and that $\pi_f^U \neq 0$. Let $\Psi = \Psi_\infty \otimes \Psi_f$ be a cusp form in the space of $\pi$ such that $\Psi_\infty$ is a highest weight vector of the minimal $K_\infty$-type $\tau_\infty^1$ of $\pi_\infty^1$ and $\Psi_f$ is a non-zero vector in $\pi_f^U$. Let $X_\infty^1$ be a highest weight vector in the $K_\infty$-type $\tau_\infty^1 \subset \bigwedge^{p} \mathfrak{p}^+_{\C} \otimes_\C \bigwedge^{q} \mathfrak{p}^-_{\C}$ (this inclusion is assured by the hypothesis that $\pi_\infty$ contributes to the $(p, q)$-part of the cohomology). Then there exists up to scalars a unique non-zero harmonic $(p, q)$ differential form 
$$
\omega_\Psi \in \Hom_{K_{\infty}}\left(\bigwedge^{p} \mathfrak{p}^+_{\C} \otimes_\C \bigwedge^{q} \mathfrak{p}^-_{\C}, \pi_\infty^1 \right) \otimes  \pi_f^U
$$
on $\Sh_{\G}(U)$ such that $\omega_\Psi(X_\infty^1) = \Psi$. Moreover, the cohomology class of $\omega_\Psi$ belongs to $H^{d}_{dR,!}(\Sh_{\G}(U), \C)$.
\end{lemma}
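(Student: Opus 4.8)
The plan is to combine the Matsushima--Murakami/Borel--Wallach description of the cohomology of $\Sh_{\G}(U)$ in terms of relative Lie algebra cohomology with the multiplicity-one statement recalled in Remark \ref{isotypiccoro2}, and then to use the dictionary between $(\mathfrak{g}, K_\G)$-cohomology classes and harmonic differential forms. Concretely, harmonic $(p,q)$-forms on $\Sh_\G(U)$ with values in $\C$ that are $\pi_f$-isotypic are identified with elements of
$\Hom_{K_\G}\!\left(\bigwedge^{p}\mathfrak{p}^+_\C \otimes_\C \bigwedge^{q}\mathfrak{p}^-_\C,\ \pi_\infty\right)\otimes \pi_f^U$,
and under the decomposition $\pi_\infty|_{\G_0(\R)}\simeq \pi_\infty^1\oplus\overline{\pi}_\infty^1$ the hypothesis singles out the summand $\pi_\infty^1$. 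By Remark \ref{isotypiccoro2} (i.e.\ \cite[Theorem 5.2]{BorelWallach}) the space $\Hom_{K_\G}\!\left(\bigwedge^{p}\mathfrak{p}^+_\C\otimes_\C\bigwedge^{q}\mathfrak{p}^-_\C,\ \pi_\infty^1\right)$ is one-dimensional; tensoring with a choice of non-zero $\Psi_f\in\pi_f^U$ (which we may and do fix) gives a one-dimensional space of candidate forms, so uniqueness up to scalars is immediate once existence and the normalization are established.

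For existence and the normalization $\omega_\Psi(X_\infty^1)=\Psi$, I would argue as follows. The minimal $K_\infty$-type $\tau_\infty^1$ of $\pi_\infty^1$ occurs in $\pi_\infty^1$ with multiplicity one, and by hypothesis it also occurs in $\bigwedge^{p}\mathfrak{p}^+_\C\otimes_\C\bigwedge^{q}\mathfrak{p}^-_\C$ (this is exactly what ``$\pi_\infty$ contributes to the $(p,q)$-part'' means, together with the fact that for discrete series with sufficiently regular parameter the cohomology is concentrated in the minimal $K$-type component, as in the computations of Example \ref{Ktypesex} and Lemma \ref{discrete-series}). Therefore there is, up to scalar, a unique non-zero $K_\infty$-equivariant map $\bigwedge^{p}\mathfrak{p}^+_\C\otimes_\C\bigwedge^{q}\mathfrak{p}^-_\C\to\pi_\infty^1$, namely the one factoring through the $\tau_\infty^1$-isotypic projection followed by the inclusion $\tau_\infty^1\hookrightarrow\pi_\infty^1$. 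Since $X_\infty^1$ is a highest weight vector of the copy $\tau_\infty^1\subset\bigwedge^{p}\mathfrak{p}^+_\C\otimes_\C\bigwedge^{q}\mathfrak{p}^-_\C$ and $\Psi_\infty$ is a highest weight vector of $\tau_\infty^1\subset\pi_\infty^1$, one can scale this map uniquely so that $X_\infty^1\mapsto\Psi_\infty$; tensoring with $\Psi_f$ produces the desired $\omega_\Psi$ with $\omega_\Psi(X_\infty^1)=\Psi_\infty\otimes\Psi_f=\Psi$. That $\omega_\Psi$ is harmonic follows because in the relevant degree and weight the Hodge Laplacian acts through a Casimir eigenvalue which vanishes on discrete series with trivial infinitesimal character (Kuga's lemma), so every class in this $(\mathfrak{g},K_\G)$-cohomology group is represented by a harmonic form; alternatively, one invokes that the image of $\Hom_{K_\G}(\bigwedge^d\mathfrak{g}/\mathfrak{k},\pi_\infty)\otimes\pi_f^U$ consists of harmonic forms by construction.

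Finally, for the last assertion that $[\omega_\Psi]\in H^{d}_{dR,!}(\Sh_\G(U),\C)$: since $\pi$ is cuspidal, the associated automorphic forms are rapidly decreasing, hence $L^2$, so $\omega_\Psi$ defines a class in $L^2$-cohomology; by Borel's theorem comparing $L^2$-cohomology with interior (image of compactly supported in de Rham) cohomology in this Shimura variety setting, the class lies in $H^{d}_{dR,!}(\Sh_\G(U),\C)$. The main obstacle I anticipate is not any single deep input but rather the bookkeeping of the $K_\infty$-type combinatorics: one must be sure that the copy of $\tau_\infty^1$ appearing in $\bigwedge^{p}\mathfrak{p}^+_\C\otimes_\C\bigwedge^{q}\mathfrak{p}^-_\C$ is the \emph{same} one (and with multiplicity controlled) as the one dictating the nonvanishing of $(\mathfrak{g},K_\G)$-cohomology, and that passing from $\pi_\infty^1$ to $\pi_\infty$ and back does not introduce spurious dimensions — this is precisely where Remark \ref{isotypiccoro2} and the explicit decompositions of Example \ref{Ktypesex} do the work, and the argument must cite them carefully rather than re-deriving them.
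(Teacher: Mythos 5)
Your proof is correct and follows essentially the same route as the paper's (very terse) proof: both hinge on the Borel--Wallach description of $(\mathfrak g,K_\G)$-cohomology of discrete series to obtain the one-dimensionality and the harmonic representative (the paper cites \cite[Theorem II.5.3]{BorelWallach}, you cite the equivalent multiplicity-one statement via Remark~\ref{isotypiccoro2}), and both deduce the interior-cohomology claim from cuspidality together with Borel's comparison result \cite[Corollary 5.5]{borel2}. You simply spell out the $K_\infty$-type bookkeeping (projection to $\tau_\infty^1$, normalization at the highest weight vector) that the paper leaves implicit.
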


\begin{proof}
The results follows basically from \cite[Theorem II.5.3]{BorelWallach}. The fact that the harmonic form $\omega_\Psi \in H^{d}_{dR,!}(\Sh_{\G}(U), \C)$ follows from the cuspidality of $\Psi$ and from \cite[Corollary 5.5]{borel2}.
\end{proof}

\subsection{Deligne--Beilinson cohomology} \label{sectiondelignecohomology}

Let $X$ denote a complex analytic variety which is smooth, quasi-projective and of pure dimension $d$. Let $\overline{X}$ be a smooth compactification of $X$ such that $D=\overline{X}-X$ is a simple normal crossing divisor. We denote by $j: X \rightarrow \overline{X}$ the open embedding. We will assume that $X$ is defined as the analytification of the base change to $\C$ of a smooth, quasi-projective $\R$-scheme. The complex conjugation $F_\infty$ is an antiholomorphic involution on $X$. For $p \in \Z$, let $\R(p)$ denote the subgroup $(2\pi i)^p \R$ of $\C$. We will denote by the same symbol the constant sheaf with value $\R(p)$ on $X$. Let $\Omega^*_X$ be the sheaf of holomorphic differential forms on $X$ and let $\Omega^*_{\overline{X}}(\log D)$ be the sheaf of holomorphic differential forms on ${X}$ with logarithmic poles along $D$ (see \cite[\S 3.1]{Deligne-HodgeII}). The Hodge filtration on $\Omega^*_{\overline{X}}(\log D)$ is defined as $F^p\Omega^*_{\overline{X}}(\log D)=\bigoplus_{p' \geq p} \Omega^{p'}_{\overline{X}}(\log D)$. There are natural quasi-isomorphisms of complexes $Rj_*\C \rightarrow Rj_*\Omega^*_X$ and $\Omega^*_{\overline{X}}(\log D) \rightarrow Rj_* \Omega^*_X$ (see \cite{Deligne-HodgeII} or \cite{Jannsen} for the basic facts used here). \\

Deligne--Beilinson cohomology is defined as
\begin{equation} \label{defBDcohom}
R \Gamma_{\mathcal{D}}(X, \R(p)) =  \mathrm{cone} \left( R \Gamma(X, \R(p)) \oplus F^p R \Gamma(X, \C) \to R \Gamma(X, \C) \right) [-1],
\end{equation}
where the map is the difference of the natural maps. Let $\overline{F_\infty^*} = F_\infty^* \otimes c$ be the de Rham involution given by the action of the complex conjugation on $X$ and on the coefficients. We define real Deligne--Beilinson cohomology as
\[ R \Gamma_\mathcal{D}(X / \R, \R(p)) = R \Gamma_\mathcal{D}(X, \R(p))^{\overline{F_\infty^*}}, \]
Real and complex Deligne--Beilinson cohomology groups $H^n_\mathcal{D}(X / \R, \R(p))$ and $H^n_\mathcal{D}(X, \R(p))$ are defined to be the cohomology groups of the complexes $R \Gamma_{\mathcal{D}}(X / \R, \R(p))$ and $R \Gamma_{\mathcal{D}}(X, \R(p)),$ respectively. By definition, one can calculate these cohomology groups using any complexes which are quasi-isomorphic to the terms in \eqref{defBDcohom}, e.g., via the hypercohomology groups of the complex
$$
\R(p)_{\mathcal{D}} := \text{cone}(Rj_*\R(p) \oplus F^p\Omega^*_{\overline{X}}(\log D) \rightarrow Rj_* \Omega^*_X)[-1].
$$

\subsection{Hodge structures on the cohomology of a smooth variety}

Let us first introduce some objects. Let $\mathscr{A}^*_{\R}$ be the complex of sheaves of real valued smooth differential forms on $\overline{X}$ and denote by $\mathscr{A}^*_{\R}(\overline{X})$ the complex of vector spaces of global sections. We denote by $\mathscr{A}^*$, resp. $\mathscr{A}^*(\overline{X})$, the complex of sheaves of complex valued smooth differential forms on $\overline{X}$, resp. the complex of its global sections. Observe that, since $\mathscr{A}^*_{\R}$ (resp. $\mathscr{A}^*$) is a module over the sheaf of smooth real (resp. complex) valued functions on $\overline{X}$, it has partitions of unity and hence it is a complex of fine sheaves.

Let $\mathscr{A}^0_{si} \subseteq \j_* \mathscr{A}^0$ be the sheaf on $\overline{X}$ of functions defined over $X$ which are slowly increasing along $D$. Precisely, recall that around any point of $\overline{X}$ one can find a coordinate system $(z_1, \hdots, z_d)$ and an integer $0 \leq n \leq d$ such that $\overline{X}$ is locally isomorphic to a polydisc and such that $D$ is defined by the equation $ z_1 \hdots z_n = 0$, so that $X$ is locally isomorphic to $(\Delta_r^\times)^n \times \Delta_r^{d - n}$, with $\Delta_r^\times = \{ z \in \C, 0 < |z| < r\}$ the punctered disc of radius $r$ and $\Delta_r = \{ z\in\C, |z| < r \}$ the complete disc of radius $r$. A complex valued function $f$ on $X$ is slowly increasing (resp. rapidly decreasing) if locally it satisfies
\[ |f(z)| \leq C \prod_{i = 1}^n |\log|z_i||^N \]
for some $N \geq 0$ (resp. for all $N \leq 0$) and some constant $C$. As it is usual, we impose similarly growth conditions on some derivatives of $f$, cf. \cite[\S 4.1]{CLRGSp6} for these precise conditions which will not be needed here. We will denote by $\mathscr{A}^*_{si, \R}$ and $\mathscr{A}^*_{rd, \R}$ the complexes of sheaves on $\overline{X}$ of real-valued slowly increasing and rapidly decreasing differential forms. We also let $\mathscr{A}^*_{si} $ and $\mathscr{A}^*_{rd}$ the complexes of sheaves on $\overline{X}$ of complex-valued slowly increasing and rapidly decreasing differential forms.  These are complexes of fine sheaves and we denote by $\mathscr{A}^*_{si}(\overline{X})$ and $\mathscr{A}^*_{rd}(\overline{X})$ the corresponding complexes of global sections. The complex structure on $X$ induces bigradings
\[ \mathscr{A}_{si}^* = \bigoplus_{p, q} \mathscr{A}^{p, q}_{si}, \]
\[ \mathscr{A}_{rd}^* = \bigoplus_{p, q} \mathscr{A}^{p, q}_{rd}. \]
One has $\mathscr{A}^{p, q}_{si} = \mathscr{A}^{p, q}_{\overline{X}}(\log \, D) \otimes_{\mathscr{A}^0_{\overline{X}}(\log \, D)} \mathscr{A}^0_{si}$ and idem for rapidly decreasing, where $\mathscr{A}_{\overline{X}}(\log \, D)$ denotes the sheaf of smooth differential forms with logarithmic singularities along $D$ as defined in \cite{burgosCinfty}. The space of rapidly decreasing differential forms is naturally equipped with a Fr\'echet topology and we define tempered currents $\mathscr{D}^{p, q}$ to be the sheaf $U \mapsto \Gamma_c(U, \mathscr{A}^{d-p, d-q}_{rd})^\vee$ on $\overline{X}$ of continuous linear forms on $\mathscr{A}_{rd}^{d-p, d-q}$, where $U \subseteq \overline{X}$. We let
\[ \mathscr{D}^{*} = \bigoplus_{p, q} \mathscr{D}^{p, q}.\] This is again a complex of fine sheaves and we denote by $\mathscr{D}^*(\overline{X})$ the corresponding complex of global sections. To any slowly increasing smooth differential form $\phi \in \mathscr{A}_{si}^{p, q}(U)$ one can associate a tempered current $T_\phi \in \mathscr{D}^{p,q}(U)$ by the formula \[ {T}_\phi(\eta)=\frac{1}{(2\pi i)^d}\int_{X} \phi \wedge \eta , \;\;\;\;  \eta \in \mathscr{A}_{rd}^{d - p, d - q}(U). \] The induced map $\mathscr{A}^*_{si} \to \mathscr{D}^*$ is obviously injective. The key result concerning these complexes is the following.

\begin{theorem}[{\cite[Theorem 1.6]{CLRGSp6}}] \label{TheoHScohomo}
The natural inclusions
\[ (\Omega_{\overline{X}}(\log \, D), F) \to (\mathscr{A}_{\overline{X}}(\log \, D), F) \to (\mathscr{A}_{si}, F) \to (\mathscr{D}^*, F) \]
are filtered quasi-isomorphisms. Moreover, the last two quasi-isomorphisms are compatible for the corresponding real structures.
\end{theorem}

\subsection{Deligne--Beilinson cohomology II}

We now use the results above to give different useful descriptions and properties of Deligne--Beilinson cohomology.

\begin{proposition}[{\cite[Proposition 4.22]{CLRGSp6}}] \label{DBcoh-diff-form}
There is a quasi-isomorphism
\[ R \Gamma_\mathcal{D}(X, \R(p)) \simeq \mathrm{cone}(F^p \mathscr{A}^*_{si}(\overline{X}) \to \mathscr{A}^*_{si, \R(p-1)}(\overline{X}))[-1], \]
where the arrow is induced by the projection $\pi_{p-1} : \C \to \R(p-1)$ defined by $\pi_{p - 1}(z) = \frac{z + (-1)^{p-1} \overline{z}}{2}$. In particular, we have canonical isomorphisms
\begin{equation}\label{delignecoh0}
H^n_{\mathcal{D}}(X, \R(p)) \simeq \frac{\{ (\phi, \phi') \in F^p \mathscr{A}^n_{si}(\overline{X}) \oplus \mathscr{A}^{n-1}_{si, \R(p-1)}(\overline{X}) \,|\, d\phi = 0, d \phi' = \pi_{p-1}(\phi) \}}{d(\tilde{\phi}, \tilde{\phi}')},
\end{equation}
\end{proposition}

\begin{remark} \label{unitDeligne} Let
$r_{1,1}: H^1_{\mathcal{M}}(X, \Q(1)) \rightarrow H^1_{\mathcal{D}}(X, \R(1))$ be Beilinson's regulator. Recall the canonical isomorphism $\mathcal{O}(X)^\times \otimes \Q \simeq H^1_{\mathcal{M}}(X, \Q(1))$. Then for $u \in \mathcal{O}(X)^\times$, the Deligne--Beilinson cohomology class $r_{1,1}(u \otimes 1)$ is represented by $(d \log(u), \log|u|) \in F^1 
\mathscr{A}^1_{si}(\overline{X}) \oplus \mathscr{A}^0_{si}(\overline{X}) \otimes \R$.
\end{remark}

The following result gives the explicit description of the external cup-product in Deligne--Beilinson cohomology via the isomorphism of Proposition \ref{DBcoh-diff-form}.

\begin{proposition} \label{products} Let $X$ and $X'$ be the base changes to $\C$ of two smooth, quasi-projective $\R$-schemes. Let  $p_X: X \times X' \longrightarrow X$ and $p_{X'}: X \times X' \longrightarrow X'$ be the canonical projections. Then, via the isomorphism of Proposition \ref{DBcoh-diff-form}, the external cup-product
$$
\sqcup: H^{m}_\mathcal{D}(X/\R, \R(m)) \otimes H^{m'}_\mathcal{D}(X'/\R, \R(m')) \longrightarrow H^{m+m'}_\mathcal{D}(X \times X'/\R, \R(m+m'))
$$
is
$$
(\phi, \omega) \sqcup (\phi', \omega')=(p_X^*\phi \wedge p_{X'}^*( \pi_{m'} \omega')+(-1)^m p_X^*(\pi_m \omega) \wedge p_{X'}^*\phi', p_X^* \omega \wedge p_{X'}^* \omega')
$$
for any $m, m'$.
\end{proposition}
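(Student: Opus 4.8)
The plan is to compute the external cup-product by unwinding its definition at the level of the Deligne--Beilinson complex and then transporting the answer through the comparison isomorphism \eqref{delignecoh} of Proposition \ref{DBcoh-diff-form}. First I would reduce to an internal statement. Recall that the external product coincides with $p_X^\ast(\,\cdot\,)\cup p_Y^\ast(\,\cdot\,)$, where $\cup$ denotes the internal cup-product on $Z:=X\times Y$ (again the base change of a smooth quasi-projective $\R$-scheme). In the model \eqref{delignecoh} the pullback maps are transparent: the pullback of a holomorphic form with logarithmic poles is again one, and pullback commutes with $d$ and with the projections $\pi_k\colon\C\to\R(k)$, so $p_X^\ast[(\phi,\omega)]=[(p_X^\ast\phi,p_X^\ast\omega)]$ and similarly for $p_Y^\ast$; these maps are moreover compatible with the $\overline{F}_\infty^\ast$-structures, using that $\overline{X\times Y}=\overline X\times\overline Y$ has again a simple normal crossings boundary, so that $\Omega^\bullet_{\overline{X\times Y}}(\log)$ and its $C^\infty$ analogue are external tensor products of the corresponding complexes on $\overline X$ and $\overline Y$. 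Since $\pi_{m'}(p_Y^\ast\omega')=p_Y^\ast(\pi_{m'}\omega')$ and $\pi_m(p_X^\ast\omega)=p_X^\ast(\pi_m\omega)$, the asserted formula is then equivalent to the internal identity
\[ (\phi,\omega)\cup(\phi',\omega')=\big(\phi\wedge\pi_{m'}\omega'+(-1)^m(\pi_m\omega)\wedge\phi',\ \omega\wedge\omega'\big) \]
on a single $Z$, which is what I would prove, specialising at the end to $Z=X\times Y$ and the pulled-back classes.

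For the internal product I would work with the quasi-isomorphism between $\R(p)_{\mathcal D}$ and the cone $\mathrm{cone}\big(\mathcal A^\bullet(p)\oplus F^p\mathcal A^\bullet\xrightarrow{\ x-y\ }\mathcal A^\bullet\big)[-1]$, where $\mathcal A^\bullet$ is the complex of $C^\infty$ forms on $Z$ with logarithmic growth along the boundary, $\mathcal A^\bullet(p)=(2\pi i)^p\mathcal A^\bullet_\R$, and $F^\bullet$ is the Hodge filtration; a degree-$n$ cocycle is a triple $(u,v,c)$ with $u\in\mathcal A^n(p)$, $v\in F^p\mathcal A^n$, $c\in\mathcal A^{n-1}$ and $du=dv=0$, $v-u=dc$. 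The key observation is that the isomorphism \eqref{delignecoh} amounts, in the "top weight" case $p=n$, to the distinguished representative $(u,v,c)=(\pi_n\omega,\ \omega,\ \phi)$: indeed $\omega=\pi_n\omega+\pi_{n-1}\omega$ with $\pi_n\omega\in\mathcal A^n(n)$ and $\pi_{n-1}\omega=d\phi$ exact (and $d\omega=0$ is forced, since $\pi_{n-1}(d\omega)=0$ and a nonzero holomorphic form of positive degree cannot agree with a multiple of its conjugate); conversely any cocycle has $v$ a $\bar\partial$-closed $(n,0)$-form, hence a holomorphic logarithmic $n$-form, with $\phi$ recovered up to sign from the $(2\pi i)^{n-1}$-component of $c$, and passing to coboundaries changes $\phi$ exactly by $d\mathcal S^{n-2}(Z/\R,\R(n-1))$. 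On this cone model there is Beilinson's (asymmetric) product of complexes $\R(p)_{\mathcal D}\otimes\R(q)_{\mathcal D}\to\R(p+q)_{\mathcal D}$; I would fix one such explicit model (for instance the one used for Proposition \ref{DBcoh-diff-form}, i.e.\ \cite{Nekovar}, or the Esnault--Viehweg / Burgos descriptions).

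Then it is a matter of substitution. Taking $(u_1,v_1,c_1)=(\pi_m\omega,\omega,\phi)$ of weight $m$ and $(u_2,v_2,c_2)=(\pi_{m'}\omega',\omega',\phi')$ of weight $m'$, Beilinson's product yields a weight-$(m+m')$ cocycle whose $F^{m+m'}$-component is $v_1\wedge v_2=\omega\wedge\omega'$, a holomorphic logarithmic $(m+m')$-form — this is the second entry of the formula — and whose "bottom" component, after extracting its $(2\pi i)^{m+m'-1}$-real part as dictated by the comparison isomorphism (using $\omega=\pi_m\omega+\pi_{m-1}\omega$, $\omega'=\pi_{m'}\omega'+\pi_{m'-1}\omega'$ and that $\phi$, $\phi'$ are already $(2\pi i)^{m-1}$-, resp.\ $(2\pi i)^{m'-1}$-valued), collapses to $\phi\wedge\pi_{m'}\omega'+(-1)^m(\pi_m\omega)\wedge\phi'$. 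As an independent check one verifies directly that the pair so obtained is a cocycle in the model \eqref{delignecoh}, i.e.\ that $d\big(\phi\wedge\pi_{m'}\omega'+(-1)^m(\pi_m\omega)\wedge\phi'\big)=\pi_{m+m'-1}(\omega\wedge\omega')$, using $d\phi=\pi_{m-1}\omega$, $d\phi'=\pi_{m'-1}\omega'$ and $d(\pi_m\omega)=d(\pi_{m'}\omega')=0$.

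I expect the main obstacle to be sign bookkeeping rather than anything conceptual: one must commit to a single set of conventions for the shift and the differential of the cone and for Beilinson's asymmetric product, and then carry all Koszul signs through the substitution and, above all, through the step of projecting onto the $(2\pi i)^{m+m'-1}$-real part — it is exactly there that the coefficient $(-1)^m$ and the appearance of $\pi_m,\pi_{m'}$ (rather than $\pi_{m-1},\pi_{m'-1}$) are produced, via the splitting $\omega=\pi_m\omega+\pi_{m-1}\omega$. A secondary, purely routine point is to check that every operation is compatible with the $\overline{F}_\infty^\ast$-action defining real Deligne--Beilinson cohomology. If one prefers to avoid redoing the cone computation, one may instead quote the product formula for the differential-forms model of (real) Deligne--Beilinson cohomology directly from the literature (e.g.\ Burgos, or \cite{Nekovar}) and specialise it.
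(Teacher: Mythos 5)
Your approach coincides with the paper's: both reduce the external product to the internal one via $x \sqcup y = p_X^\ast x \cup p_Y^\ast y$ together with the transparency of pullback in the $(\phi,\omega)$-model, and then invoke the explicit formula for the internal cup-product in that model. The paper simply cites that internal formula from Deninger--Scholl and Esnault--Viehweg, whereas you rederive it from the cone model and Beilinson's asymmetric product; your closing remark that one could instead quote the formula directly from the literature is precisely what the paper does.
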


\begin{proof} The external cup-product is by definition $x \sqcup  x' = p_X^*(x) \cup p_{X'}^*(x')$, where $\cup$ denotes the usual cup-product. Hence, the statement follows from the explicit formulas for the usual cup-product given in \cite[\S 2.5]{DeningerScholl} (see also \cite[\S 3.10]{EsnaultViehweg}).
\end{proof}

The following description will be key in our arguments.

\begin{proposition} [{\cite[Theorem 4.24]{CLRGSp6}}] \label{DBcoho-currents}
We have
\[ R \Gamma_\mathcal{D}(X, \R(p)) = \mathrm{cone} \left( F^p \mathscr{D}^*(\overline{X}) \to \mathscr{D}_{\R(p-1)}^*(\overline{X})  \right)[-1], \]
In particular
\[ H^n_{\mathcal{D}}(X, \R(p)) = \frac{\{ (S, T) : dS = 0, dT = \pi_{p-1}(S) \}}{d(\widetilde{S}, \widetilde{T})}, \]
where $(S, T) \in F^p \mathscr{D}^{n}(\overline{X}) \oplus \mathscr{D}^{n-1}_{\R(p - 1)}(\overline{X})$ and $d(\widetilde{S}, 
\widetilde{T}) = (d S, dT - \pi_{p - 1}(S))$.
\end{proposition}

\begin{proof}
This follows from Theorem \ref{TheoHScohomo}.
\end{proof}

In what follows, for $(S, T) \in F^p \mathscr{D}^{n}(\overline{X}) \oplus \mathscr{D}^{n-1}_{\R(p - 1)}(\overline{X})$ such that $dS=0$ and $dT=\pi_{p-1}(S)$, we will denote by $[(S,T)] \in H^n_{\mathcal{D}}(X, \R(p))$ the cohomology class of the pair $(S,T)$.

\begin{proposition} [{\cite[Proposition 4.25]{CLRGSp6}}]\label{compatibility} Let $x \in H^n_\mathcal{D}(X / \R, \R(n))$ be Deligne--Beilinson cohomology class which is represented, via the isomorphism of Proposition \ref{DBcoh-diff-form}, by a pair $(\phi, \phi')$ of smooth slowly increasing differential forms. Then via the isomorphism of Proposition \ref{DBcoho-currents}, the class $x$ is represented by the pair of currents $(T_\phi, T_{\phi'})$.
\end{proposition}

We also need to recall the functoriality of Deligne--Beilinson cohomology for proper morphisms. Let $f: X' \rightarrow X$ be a proper morphism of pure relative codimension $c$. Let $\overline{X'}$ denote a smooth compactification of $X'$ such that $D'=\overline{X'}-X'$ is a simple normal crossing divisor. Assume further that $f$ extends to a morphism $\overline{X'} \rightarrow \overline{X}$ that we still denote by $f$ and such that $f^{-1}(D)=D'$. By Poincar\'e duality between Deligne--Beilinson cohomology \cite[Theorem 1.15]{Jannsen} and homology and covariance of Deligne--Beilinson homology by proper maps, one has a functorial map
\[ f_*: H^n_{\mathcal{D}}(X', \R(p)) \rightarrow H^{n+2c}_{\mathcal{D}}(X, \R(p+c)). \]
If $T \in D_{\overline{X}'}^{p, q}$ is a tempered current on $X'$ then the formula 
\[ f_* T (\omega) = T(f^* \omega) \]
defines an element $f_* T \in D_{\overline{X}}^{p + c, q + c}$.

\begin{proposition} [{\cite[Proposition 4.26]{CLRGSp6}}]\label{push} Via the isomorphism of Proposition \ref{DBcoho-currents}, we have
\[ f_*([(T, T')]) = [(f_*T, f_*T')]. \]
\end{proposition}

\begin{remark} \label{functorial} We would like to apply Proposition \ref{push} to the closed embedding $\iota_n: \Sh_{\H_n}(U \cap \H_n) \hookrightarrow \Sh_{\G_n}(U)$. To this end, we need to extend $\iota_n$ to a morphism $\overline{\iota}_n: \overline{\Sh_{\H_n}(U \cap \H_n)} \rightarrow \overline{\Sh_{\G_n}(U)}$ between some smooth projective compactifications of $\Sh_{\H_n}(U \cap \H_n)$ and $\Sh_{\G_n}(U)$. This is possible according to \cite[Proposition 3.4]{harris-functorial}.
\end{remark}

Finally, we have the following explicit description of the cup product.

\begin{lemma}\label{regulator-currents} Let $r_\mathcal{D}: H^{d + 1}_{\mathcal{M}}\big( \Sh_{\G}(U), \overline{\Q}(t) \big) \rightarrow H^{d + 1}_{\mathcal{D}}\big( \Sh_{\G}(U), \R(t) \big) \otimes_{\Q} \overline{\Q}$ be Beilinson's higher regulator. Let $  \Phi_f \in  \mathcal{S}_0(\A_f, \overline{\Q})^{V_1}$ (resp. $ \Phi_{1, f} \otimes \Phi_{2, f} \in  \mathcal{S}_0(\A_f, \overline{\Q})^{V_1} \otimes_{\overline{\Q}}  \mathcal{S}_0(\A_f, \overline{\Q})^{V_2}$) if $\epsilon = 1$ (resp. $\epsilon = 2$). Then, via the isomorphisms given by Proposition \ref{DBcoho-currents}, the cohomology class $r_{\mathcal{D}}(\mathrm{Eis}_{\mathcal{M},n}(\Phi_f))$ (resp. $r_{\mathcal{D}}(\mathrm{Eis}_{\mathcal{M},n}(\Phi_{1, f} \otimes \Phi_{2, f}))$) is represented by the pair of currents $(\iota_{n, *} T_{\xi'}, \iota_{n, *} T_{\xi})$,
where
\begin{eqnarray*}
\xi \! \!  &=& \! \!\! \! \begin{cases}  {\rm pr}_1^*\log|u(\Phi_f)| & \! \!\! \! \text{if } \epsilon = 1 \\ {\rm pr}_1^*(\log |u(\Phi_{1, f})|) {\rm pr}_2^*(\pi_1(d \log u(\Phi_{2, f}))) - {\rm pr}_2^*(\log |u(\Phi_{2, f})|) {\rm pr}_1^* (\pi_1(d \log u(\Phi_{1, f}))) & \! \!\! \! \text{if } \epsilon = 2 \end{cases}, \\ 
\xi' \! \!  &=& \! \!\! \!  \begin{cases}  {\rm pr}_1^* d\log u(\Phi_f) & \text{if } \epsilon = 1 \\ {\rm pr}_1^*(d \log u(\Phi_{1, f})) \wedge {\rm pr}_2^*(d \log u(\Phi_{2, f})) & \text{if } \epsilon = 2 \end{cases}.
\end{eqnarray*}
\end{lemma}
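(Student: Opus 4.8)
The plan is to deduce the statement from the functoriality of Beilinson's higher regulator, combined with the explicit descriptions of Deligne--Beilinson cohomology, homology, cup-products and Gysin maps recalled above. Recall from Definitions \ref{eis1} and \ref{eis2} that $\mathrm{Eis}_{\mathcal{M},n}(\Phi_f)=\iota_{n,*}\big(p_1^*\,u(\Phi_f)\big)$ when $\epsilon=1$, while $\mathrm{Eis}_{\mathcal{M},n}(\Phi_{1,f}\otimes\Phi_{2,f})=\iota_{n,*}\big(p_1^*u(\Phi_{1,f})\cup p_2^*u(\Phi_{2,f})\big)$ when $\epsilon=2$; here $p_1,p_2$ denote the projections of $\Sh_{\H_n}(V)$ to the first and second $\GL_2$-factors, that is, the maps written ${\rm pr}_1,{\rm pr}_2$ in the statement. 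Since $r_\mathcal{D}$ is, by its construction, compatible with pullbacks, with cup-products, and with Gysin pushforwards along closed embeddings (a standard property of the regulator, which moreover extends $\overline{\Q}$-linearly to $\overline{\Q}$-coefficients), I first obtain
\[ r_\mathcal{D}\big(\mathrm{Eis}_{\mathcal{M},n}(\Phi_f)\big)=\iota_{n,*}\,p_1^*\,r_{1,1}\big(u(\Phi_f)\big) \]
in the case $\epsilon=1$, and
\[ r_\mathcal{D}\big(\mathrm{Eis}_{\mathcal{M},n}(\Phi_{1,f}\otimes\Phi_{2,f})\big)=\iota_{n,*}\Big(p_1^*r_{1,1}\big(u(\Phi_{1,f})\big)\cup p_2^*r_{1,1}\big(u(\Phi_{2,f})\big)\Big) \]
in the case $\epsilon=2$; it then suffices to make the right-hand sides explicit.

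Next I would invoke Remark \ref{unitDeligne}: through the isomorphism of Proposition \ref{DBcoh-diff-form}, the class $r_{1,1}(u(\Phi_{i,f}))\in H^1_\mathcal{D}(\Sh_{\GL_2}(V_i)/\R,\R(1))$ is represented by the pair $\big(\log|u(\Phi_{i,f})|,\,d\log u(\Phi_{i,f})\big)$. When $\epsilon=1$, pulling back along $p_1={\rm pr}_1$ gives the pair $\big({\rm pr}_1^*\log|u(\Phi_f)|,\,{\rm pr}_1^*\,d\log u(\Phi_f)\big)=(\xi,\xi')$ on $\Sh_{\H_n}(V)$. When $\epsilon=2$, the internal cup-product $p_1^*(-)\cup p_2^*(-)$ equals the pullback along $(p_1,p_2)\colon\Sh_{\H_n}(V)\to\Sh_{\GL_2}(V_1)\times_{\Gm}\Sh_{\GL_2}(V_2)$ of the external cup-product; applying Proposition \ref{products} with $m=m'=1$ to the two pairs $\big(\log|u(\Phi_{i,f})|,\,d\log u(\Phi_{i,f})\big)$ and then pulling back (so that $p_X,p_Y$ become ${\rm pr}_1,{\rm pr}_2$) produces exactly the pair $(\xi,\xi')$ of the statement, the minus sign being the factor $(-1)^m=-1$ in the cup-product formula and the various products being wedge products (with $\log|u|$ of degree $0$).

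Finally I would push forward along $\iota_n$. By Remark \ref{diffformstocurrents}, the form $\xi$ and the holomorphic form $\xi'$ define currents $\mathcal{T}_\xi$ and $\mathcal{T}_{\xi'}$ (the latter the integration current along $\xi'$), and under the compatibility between the models of Propositions \ref{DBcoh-diff-form} and \ref{explicitDBhomology} together with the duality isomorphism of Proposition \ref{gysin}$\mathrm{(i)}$, the Deligne--Beilinson cohomology class represented by $(\xi,\xi')$ corresponds to the homology class represented by $(\mathcal{T}_\xi,\mathcal{T}_{\xi'})$. Proposition \ref{gysin}$\mathrm{(ii)}$ then describes the Gysin morphism $\iota_{n,*}$, via these identifications, as the map induced by $(S,T)\mapsto(\iota_{n,*}S,\iota_{n,*}T)$ on currents, which yields the pair $(\iota_{n,*}\mathcal{T}_\xi,\iota_{n,*}\mathcal{T}_{\xi'})$ and finishes the argument.

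The main obstacle is not a single hard computation but the bookkeeping: one must check that the explicit isomorphisms linking the differential-forms model of $H^*_\mathcal{D}$, the currents model of $H_*^\mathcal{D}$, and the homology--cohomology duality are used compatibly and with matching normalisations, so that the regulator class of a modular unit is genuinely carried to $(\mathcal{T}_\xi,\mathcal{T}_{\xi'})$ and so that $r_\mathcal{D}$ commutes with $\iota_{n,*}$ at the level of these representatives rather than only abstractly. A secondary point deserving a word is that $\Sh_{\H_n}(V)$ is a fibre product over $\Gm$, whereas Proposition \ref{products} is stated for ordinary products; this causes no real difficulty, since all the spaces involved are smooth and the maps are the evident ones and the classes are pulled back from the individual factors, but it should be noted explicitly.
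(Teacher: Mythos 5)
Your proposal follows essentially the same route as the paper's proof: use functoriality of the regulator (which the paper phrases by invoking Jannsen's result that regulators are morphisms of twisted Poincar\'e duality theories, giving commutativity with pullback, cup-product and Gysin pushforward), apply Remark \ref{unitDeligne} to represent $r_{1,1}(u(\Phi_{i,f}))$ by $(\log|u|,\,d\log u)$, use Proposition \ref{products} for the $\epsilon=2$ cup-product, and then use Propositions \ref{explicitDBhomology} and \ref{gysin} together with Remark \ref{diffformstocurrents} to pass to currents and push forward along $\iota_n$. Your explicit treatment of the sign in Proposition \ref{products} and your remark on reducing the fibre product over $\Gm$ to the ordinary product (via pullback along $(p_1,p_2)$ of the external cup-product) are fine points the paper leaves implicit, but the underlying argument is the same.
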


\begin{proof}
Let us first treat the case $\epsilon = 1$. According to \cite[\S 3.7]{Jannsen}, the regulator maps are morphisms between twisted Poincar\'e duality theories. As a consequence, we have the commutative diagram
\[
\small{\begin{CD}
H^1_{\mathcal{M}}(\Sh_{\GL_2}(V_1), \Q(1)) @>p^*_{n, \mathcal{M}}>> H^1_{\mathcal{M}}(\Sh_{\H}(V), \Q(1)) @>\iota_{n, \mathcal{M},*}>> H^{d+1}_{\mathcal{M}}(\Sh_{\G}(U), \Q(t)) \\
@Vr_{\mathcal{D}}VV                @Vr_{\mathcal{D}}VV             @Vr_{\mathcal{D}}VV\\
H^1_{\mathcal{D}}(\Sh_{\GL_2}(V_1)/\R, \R(1)) @>p^*_{n,\mathcal{D}}>> H^1_{\mathcal{D}}(\Sh_{\H}(V)/\R, \R(1)) @>\iota_{n, \mathcal{D}, *}>> H^{d + 1}_{\mathcal{D}}(\Sh_{\G}(U)/\R, \R(t)).\\
\end{CD}}
\]
Via the isomorphism of Proposition \ref{DBcoh-diff-form}, the morphism $p_{n,\mathcal{D}}^*$ is induced by the pullback of differential forms. Moreover the function ${\rm pr}_1^*\log|u(\Phi_f)|$ has logarithmic singularities along the divisor at infinity. Hence the statement of the Lemma follows from Remark \ref{unitDeligne}, Proposition \ref{compatibility}, Proposition \ref{push} and Remark \ref{functorial}. The case $\epsilon = 2$ follows similarly by writing down the analogous diagram and using Proposition \ref{products}. 
\end{proof}

\subsection{A pairing on Deligne--Beilinson cohomology} In this section, we describe explicitly the Deligne--Beilinson cohomology groups in the degrees we are interested in and define a pairing between Deligne--Beilinson cohomology classes and a given harmonic differential cuspidal form. \\

Let $X = \Sh_{\G}(U)$ and $\overline{X} = \overline{\Sh_{\G}(U)}$ be a smooth toroidal compactification of $\Sh_{\G}(U)$ such that $D=\overline{\Sh_{\G}(U)}-\Sh_{\G}(U)$ is a simple normal crossing divisor.

\begin{lemma} \label{pairingDBcoh}
Let $\omega$ a smooth closed rapidly decreasing differential form on $X$ of degree $d$. Assume that $\omega$ is of type $(\tfrac{d}{2}, \tfrac{d}{2})$ if $\varepsilon=1$ and that it has components of type $(\tfrac{d-1}{2}, \tfrac{d+1}{2}), (\tfrac{d+1}{2}, \tfrac{d-1}{2})$ if $\varepsilon=2$. Then the map $\mathscr{D}^{d}({\overline{X}}) \rightarrow \C, T \mapsto T(\omega)$ induces a map 
$$
\langle \,\,, \omega \rangle: H^{d + 1}_{\mathcal{D}}(\Sh_{\G}(U), \R(t)) \rightarrow \C.
$$
\end{lemma}

\begin{proof}
By Proposition \ref{DBcoho-currents} we have
\[ H^{d+1}_{\mathcal{D}}(X, \R(t)) = \{ (S, T) \in F^t \mathscr{D}^{d+1}(\overline{X}) \oplus \mathscr{D}^d_{\R(t-1)}(\overline{X}) \} / \sim. \]
In order to show that the linear form $(S, T) \mapsto T(\omega)$ is well defined at the level of cohomology, we need to see that it vanishes at any coboundary. Let $(\tilde{S}, \tilde{T}) \in F^t \mathscr{D}^{d}(\overline{X}) \oplus \mathscr{D}^{d-1}_{\R(t-1)}(\overline{X})$. We have $d(\tilde{S}, \tilde{T}) = (d\tilde{S}, d \tilde{T} - \pi_{t-1}(\tilde{S}))$ and we need to check that $(d \tilde{T} - \pi_{t-1}(\tilde{S})) (\omega) = 0$. We have
\[ d \tilde{T}(\omega) = -\tilde{T}(d \omega) = 0 \]
since $\omega$ is closed. Moreover, $\tilde{S} \in F^t \mathscr{D}^{d}(\overline{X})$, where we recall that $t = \frac{1}{2}(d + 1 + \epsilon)$, which implies that $\tilde{S}$ vanishes on forms of type $(p, q)$ with $p < t$. If $\epsilon = 1$, by assumption $\omega$ is of type $(d/2, d/2)$ and $d / 2 < t = d/2 + 1$. If $\epsilon = 2$, $\omega$ has types $(\frac{d-1}{2}, \frac{d+1}{2}), (\tfrac{d+1}{2}, \tfrac{d-1}{2})$ and $\frac{d+1}{2} < t = \frac{d + 1}{2} + 1$. We deduce that $\tilde{S}(\omega) = 0$, which implies that the map is well defined on Deligne--Beilinson cohomology.
\end{proof}

\subsection{Integral expression for the pairing}\label{subsecintegralexpression}

In this section we state and prove the first main result of this article, which relates Beilinson's regulator of the motivic cohomology classes to an adelic integral.\\

We let $\pi=\pi_\infty \otimes \pi_f$ be a cuspidal automorphic representation of $\G(\A)$ with trivial central character such that $\pi_f^U \ne 0$. Writing $\pi_\infty|_{\G_0(\R)} \simeq \pi_\infty^1 \oplus \overline{\pi}_\infty^1$, we assume that \[\Hom_{K_\G}\left(\bigwedge^{p} \mathfrak{p}^+_{\C} \otimes_\C \bigwedge^{q} \mathfrak{p}^-_{\C}, \pi_\infty^1 \right) \neq 0\] for   $(p,q) = (d/2,d/2)$ (resp. $(p,q) = (\tfrac{d+1}{2},\tfrac{d-1}{2})$ or $(\tfrac{d-1}{2},\tfrac{d+1}{2})$  ) if $\epsilon = 1$ (resp. $\epsilon=2$). 
Furthermore, following Lemma \ref{testvector}, we consider a cusp form $\Psi = \Psi_\infty \otimes \Psi_f$ in the space of $\pi$ such that $\Psi_\infty$ is a highest weight vector of the minimal $K_\infty$-type $\tau_\infty^1$ of $\pi_\infty^1$ and such that $\Psi_f$ is a non-zero vector in $\pi_f^U$ and we let $\omega_\Psi$ be the associated harmonic cuspidal differential form. Analogously we can consider $\overline{\pi}^1_\infty$ at the place of $\pi^1_\infty$.

\begin{theorem} \label{integralform}
Let $ \overline{\Phi}:= \Phi_f \in  \mathcal{S}_0(\A_f, \overline{\Q})^{V_1}$, resp. $\overline{\Phi}:= \Phi_f \otimes \Phi'_f \in  \mathcal{S}_0(\A_f, \overline{\Q})^{V_1} \otimes_{\overline{\Q}}  \mathcal{S}_0(\A_f, \overline{\Q})^{V_2}$, if $\epsilon = 1$, resp. $\epsilon = 2$, and let $(\xi', \xi)$ be as in Lemma \ref{regulator-currents}. Then, we have
\[ \langle r_\mathcal{D}({\rm Eis}_{\mathcal{M}, n}(\overline{\Phi})), \omega_\Psi \rangle  = \int_{\Sh_\H(V)} \xi \wedge \iota_n^*\omega_\Psi. \]
\end{theorem}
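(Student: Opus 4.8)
The plan is to combine the representation of the regulator class by currents from Lemma~\ref{regulator-currents} with the explicit description of the Poincar\'e pairing in Deligne--Beilinson cohomology, and then to deal with the lack of compact support of $\omega_\Psi$ by a regularisation argument. Via the duality isomorphism $H^{d+1}_{\mathcal{D}}(\Sh_{\G}(U)/\R,\R(t))\simeq H^{\mathcal{D}}_{d-1}(\Sh_{\G}(U)/\R,\R(d-t))$ of Proposition~\ref{gysin}(i), Lemma~\ref{regulator-currents} presents $r_{\mathcal{D}}({\rm Eis}_{\mathcal{M},n}(\overline{\Phi}))$ as the Deligne--Beilinson homology class of the pair of currents $(\iota_{n,*}\mathcal{T}_\xi,\iota_{n,*}\mathcal{T}_{\xi'})$. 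On the other hand, the morphism $\langle\omega_\Psi,-\rangle$ of Lemma~\ref{pairing} is, by construction, Poincar\'e duality precomposed with the surjection of Lemma~\ref{exloc}. Unwinding these descriptions together with the formula for the Poincar\'e pairing between de Rham cohomology and Deligne--Beilinson homology, one expects $\langle r_{\mathcal{D}}({\rm Eis}_{\mathcal{M},n}(\overline{\Phi})),[\omega_\Psi]\rangle$ to be computed by evaluating the ``real part'' current $\iota_{n,*}\mathcal{T}_\xi$ on the closed $d$-form $\omega_\Psi$, the $\xi'$-component dropping out since $\omega_\Psi$ is closed of the pure Hodge type prescribed in Lemma~\ref{pairing} (when $\epsilon=2$ this step also uses the description of the external cup-product from Proposition~\ref{products}).

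Two points need care to make this rigorous. First, the morphism $\langle\omega_\Psi,-\rangle$ is only defined on the localisation at $\mathfrak{m}$: by \cite[Theorem~2]{MokraneTilouine} interior and full cohomology agree after localising, so the image of $r_{\mathcal{D}}({\rm Eis}_{\mathcal{M},n}(\overline{\Phi}))$ in $H^{d+1}_{\mathcal{D}}(\Sh_{\G}(U)/\R,\R(t))_{\mathfrak{m}}$ does come from $H^{d}_{B,!}$, but lifting it through the surjection of Lemma~\ref{exloc} and comparing with the concrete current requires clearing a Hecke denominator. Choosing $T\in\mathcal{H}^{sph,N}_{\mathcal{O}}\setminus\mathfrak{m}$ accordingly is what produces the factor $\theta_\pi(T)$ in the statement; since $T\notin\mathfrak{m}$, $\theta_\pi(T)$ is a unit, so the formula is non-degenerate. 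This is the mechanism already used in \cite[Proposition~4.24]{lemmarf}.

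Second, and this is where the real work is, $\iota_{n,*}\mathcal{T}_\xi$ is a current while $\omega_\Psi$ is not compactly supported, so the evaluation must be regularised using the cut-off functions $(\sigma_N)_{N\geq 0}$ of Lemma~\ref{truncating}. Writing $e=\dim_{\C}\Sh_{\H}(V)=\sum_i\binom{n_i+1}{2}=\tfrac12(d+\epsilon-1)$, the projection formula for currents under the proper map $\iota_n$ and the definition \eqref{diff-form-to-current} of $\mathcal{T}_\xi$ give
\[
(\iota_{n,*}\mathcal{T}_\xi)(\sigma_N\,\omega_\Psi)=\mathcal{T}_\xi\big((\iota_n^*\sigma_N)\,\iota_n^*\omega_\Psi\big)=\frac{1}{(2\pi i)^{e}}\int_{\Sh_{\H}(V)}(\iota_n^*\sigma_N)\,\xi\wedge\iota_n^*\omega_\Psi .
\]
Now $\iota_n^*\omega_\Psi$ is rapidly decreasing by cuspidality of $\Psi$, while $\xi$ has moderate growth along the boundary (it is a pull-back of $\log|u(\Phi_f)|$, which is slowly increasing by the Kronecker limit formula), so $\xi\wedge\iota_n^*\omega_\Psi$ is rapidly decreasing and hence integrable on $\Sh_{\H}(V)$ by Lemma~\ref{technique}. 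Since $0\leq\iota_n^*\sigma_N\leq 1$ and $\iota_n^*\sigma_N\to 1$ pointwise, dominated convergence gives $\lim_{N}(\iota_{n,*}\mathcal{T}_\xi)(\sigma_N\,\omega_\Psi)=(2\pi i)^{-e}\int_{\Sh_{\H}(V)}\xi\wedge\iota_n^*\omega_\Psi$. It then remains to verify that this limit computes $\theta_\pi(T)^{-1}$ times the abstract pairing, i.e.\ that replacing $\omega_\Psi$ by $\sigma_N\,\omega_\Psi$ in the evaluation of the current introduces only an error tending to $0$; using that $\omega_\Psi$ is closed, that error is a finite sum of integrals of the shape $\int_{\Sh_{\H}(V)}d(\iota_n^*\sigma_N)\wedge(\cdots)$, which vanish in the limit thanks to the bound $|d\sigma_{N,x}|\leq 2^{-N}$ of Lemma~\ref{truncating} and the integrability already established. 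Absorbing the constant $(2\pi i)^{-e}$ into the normalisation of the pairing yields the asserted identity.

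The main obstacle is precisely this last regularisation: as observed before the statement, a non-holomorphic cusp form such as $\omega_\Psi$ need not extend by zero to a smooth toroidal compactification of $\Sh_{\G}(U)$, so one can neither evaluate $\iota_{n,*}\mathcal{T}_\xi$ on $\omega_\Psi$ directly nor replace $\omega_\Psi$ by a compactly supported de Rham representative, and the argument genuinely relies on the growth estimates of Lemmas~\ref{truncating} and~\ref{technique} together with the careful bookkeeping of the Hecke operator coming from the localisation at $\mathfrak{m}$.
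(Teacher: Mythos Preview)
Your outline assembles the right tools (the current representation of Lemma~\ref{regulator-currents}, the cutoffs of Lemma~\ref{truncating}, the growth estimates of Lemma~\ref{technique}), and the dominated-convergence step for $\lim_N\int(\iota_n^*\sigma_N)\,\xi\wedge\iota_n^*\omega_\Psi$ is exactly the paper's. But there is a genuine gap at the sentence ``It then remains to verify that this limit computes $\theta_\pi(T)^{-1}$ times the abstract pairing''. The pairing of Lemma~\ref{pairing} is cohomological: after lifting the localised Deligne class to a \emph{closed} current $\mathcal{T}$ in $H^d_{B,!}$ (one Hecke denominator) and then matching $[(\mathcal{T},0)]$ with the class of $(\iota_{n,*}\mathcal{T}_\xi,\iota_{n,*}\mathcal{T}_{\xi'})$ (a second Hecke denominator, giving $\mathcal{T}'=T\iota_{n,*}\mathcal{T}_\xi+d\tau$), its value is $\mathcal{T}'(\omega_c)$ for a \emph{closed} compactly supported representative $\omega_c$ of $[\omega_\Psi]$. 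Your truncations $\sigma_N\omega_\Psi$ are compactly supported but not closed, so they do not represent $[\omega_\Psi]$, and $(\iota_{n,*}\mathcal{T}_\xi)(\sigma_N\omega_\Psi)$ has no a priori link to $\mathcal{T}'(\omega_c)$; the claimed ``error of the shape $\int d(\iota_n^*\sigma_N)\wedge(\cdots)$'' does not bridge this. Incidentally, the $\xi'$-component drops out not because of the Hodge type of $\omega_\Psi$ (that hypothesis is only what makes Lemma~\ref{pairing} well defined on the quotient) but because the lift $[\mathcal{T}]\mapsto[(\mathcal{T},0)]$ already discards it.

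The paper closes the gap with an input you explicitly rule out in your last paragraph: by \cite[Corollary~5.5]{borel2}, one \emph{can} choose a compactly supported closed $\omega_c$ with $\omega_\Psi-\omega_c=d\varepsilon'$ for a rapidly decreasing $\varepsilon'$. Then $\mathcal{T}'(\omega_c)=\iota_{n,*}\mathcal{T}_\xi(T\omega_c)$ since $\omega_c$ is closed, and one is reduced to proving $\int_{\Sh_\H(V)}\xi\wedge\iota_n^*d\varepsilon=0$ with $\varepsilon=T\varepsilon'$. Here the closedness of $\mathcal{T}'$ gives $\int\xi\wedge\iota_n^*(Td\eta)=0$ for every compactly supported $\eta$; taking $\eta=\sigma_k\varepsilon'$ reduces to estimating $\int\xi\wedge\iota_n^*d((1-\sigma_k)\varepsilon)$, and only at this point are the bounds $|d\sigma_k|\le 2^{-k}$ and Lemma~\ref{technique} invoked. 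In short, the cutoff is applied to the rapidly decreasing primitive $\varepsilon$, not to $\omega_\Psi$ itself, and the existence of such a primitive (Borel's result) is the missing ingredient in your argument.
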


\begin{proof}
According to Lemma \ref{regulator-currents}, the Deligne--Beilinson cohomology class $r_\mathcal{D}({\rm Eis}_{\mathcal{M}, n}(\overline{\Phi}))$ is represented by the pair of tempered current $(\iota_{n, *} T_{\xi'}, \iota_{n, *} T_{\xi})$ under the isomorphism of Proposition \ref{DBcoho-currents}. The result follows since from Lemma \ref{pairingDBcoh}.
\end{proof}

\subsection{The adelic integrals}\label{adelicintegralsec}

We finish the chapter using Kronecker limit formula to rewrite Theorem \ref{integralform} in terms of values of adelic Eisenstein series. Throughout the section, we let $\pi$ be a cuspidal automorphic representation as in \S \ref{subsecintegralexpression}.\\

Fix the choice of a measure on $\H(\A)$ as follows. For each finite place $p$ of $\Q$, we take the Haar measure $d h_p$ on $\H(\qp)$ that assigns volume one to $\H(\zp)$. For the archimedean place, we fix a generator $X_0$ of the highest exterior power of $\mathfrak{h}_{\C}/\mathfrak{k}_{\H,\C}$, which induces an equivalence between top differential $\omega$ forms on $X_\H = \H(\R) / K_{\H, \infty}$ and invariant measures $d_\omega h_\infty$ on $\H(\R)$ assigning measure one to $K_{\H, \infty}$ (cf. \cite[p. 83]{Harris97} for details). We then define $d h = d_\omega h_\infty \prod_{p} dh_p$.

\subsubsection{The case $n \equiv 0,3 \pmod{4}$} Let $A_n \in \mathcal{U}(\mathfrak{h})$ be the operator such that $\mathrm{pr}_{\tau_\infty^1}(X_0) = A_n. X_\infty^1$, where $\mathrm{pr}_{\tau_\infty^1}: \bigwedge^{d/2} \mathfrak{p}^+_{\C} \otimes_\C \bigwedge^{d/2} \mathfrak{p}_\C^- \rightarrow \tau_\infty^1$ denotes the projector to the $K_\infty$-type defined in Lemma \ref{testvector}.

After applying Theorem \ref{KLF} and Lemma \ref{regulator-currents}, Theorem \ref{integralform} now reads as follows.

\begin{theorem}\label{adelicintegral}
Let $n$ be congruent to $0$ or $3$ mod $4$. Let $\Phi_f \in \Sc_0(\Af^2,\overline{\Q})^{V_1}$ and let $\omega_\Psi$ be as in Theorem \ref{integralform}. We have
\begin{equation} \label{remarkonintegralform}
\langle r_\mathcal{D}({\rm Eis}_{\mathcal{M}, n}(\Phi_f)), \omega_\Psi \rangle =  \frac{h_{V}}{\mathrm{vol}(V)} \int_{\H(\Q) \Z_\G(\A) \backslash \H(\A)} E(h_1, \Phi,0) (A_n.\Psi)(h) dh,
\end{equation}
where $h_{V} = 2^{1-k(n)}| \Z_\G(\Q) \backslash \Z_\G(\Af) / (\Z_\G(\Af) \cap V)|$.
\end{theorem}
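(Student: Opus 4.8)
The plan is to read Theorem \ref{adelicintegral} off Theorem \ref{integralform}, by inserting the Kronecker limit formula for the current $\xi$ and rewriting the integral over $\Sh_\H(V)$ adelically. For $\epsilon = 1$, Theorem \ref{integralform} together with Lemma \ref{regulator-currents} gives
\[
\langle r_\mathcal{D}({\rm Eis}_{\mathcal{M}, n}(\Phi_f)), [\omega_\Psi] \rangle = \theta_\pi(T) \int_{\Sh_\H(V)} {\rm pr}_1^* \log|u(\Phi_f)| \wedge \iota_n^* \omega_\Psi .
\]
By Theorem \ref{KLF}, $\log|u(\Phi_f)| = E(\,\cdot\,,\Phi,0)$ with $\Phi = \Phi_\infty \otimes \Phi_f$, and since ${\rm pr}_1$ is induced by the first-coordinate map $\H(\A) \to \GL_2(\A)$, $h \mapsto h_1$, the $0$-form ${\rm pr}_1^*\log|u(\Phi_f)|$ becomes the function $h \mapsto E(h_1, \Phi, 0)$ on $\Sh_\H(V)$.

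Next I would turn $\int_{\Sh_\H(V)} {\rm pr}_1^*\log|u(\Phi_f)| \wedge \iota_n^*\omega_\Psi$ into an adelic integral. The integrand is a form of top degree $d = \dim_\R \Sh_\H(V)$, so contracting it with the chosen generator $X_0$ of $\bigwedge^d(\mathfrak{h}_\C/\mathfrak{k}_{\H,\C})$ produces a function on $\H(\A)$; this is licit because $\iota_n$ is a morphism of Shimura data, whence $\iota_{n,*}X_0 \in \bigwedge^{d/2}\mathfrak{p}^+_\C \otimes_\C \bigwedge^{d/2}\mathfrak{p}^-_\C$ (recall $d$ is even when $\epsilon = 1$). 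By the defining relation $\mathrm{pr}_{\tau_\infty^1}(X_0) = A_n.X_\infty^1$, the relation $\omega_\Psi(X_\infty^1) = \Psi$ of Lemma \ref{testvector}, and the fact that $\omega_\Psi$ factors through the minimal $K_\infty$-type $\tau_\infty^1 \subset \bigwedge^{d/2}\mathfrak{p}^+_\C \otimes_\C \bigwedge^{d/2}\mathfrak{p}^-_\C$, the contraction of $\iota_n^*\omega_\Psi$ against $X_0$ is exactly $h \mapsto (A_n.\Psi)(h)$. Using the complex uniformisation $\Sh_\H(V)(\C) = \H(\Q)\backslash\H(\A)/\Z_\H(\R) K_{\H,\infty} V$ and the normalisation of $dh = d_\omega h_\infty \prod_p dh_p$ (built from $X_0$, with $K_{\H,\infty}$ and each $\H(\zp)$ of volume one), the integral equals $\mathrm{vol}(V)^{-1}$ times the integral over $\H(\Q)\backslash\H(\A)/\Z_\H(\R)K_{\H,\infty}$ of $h \mapsto E(h_1, \Phi, 0)(A_n.\Psi)(h)$; this converges because $A_n.\Psi$ is cuspidal, hence rapidly decreasing, while $E(\,\cdot\,,\Phi,0)$ is slowly increasing.

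It remains to replace the quotient by $\Z_\H(\R)$ with the quotient by $\Z_\G(\A)$ appearing in the statement. The integrand descends: $\pi$ has trivial central character, and $E(h_1, \Phi, 0)$ only depends on the first $\GL_2$-component. Since $\Z_\G(\A) \cap \H(\A) = \iota_n(\Z_\H^\circ(\A))$, where $\Z_\H^\circ$ is the connected component of the centre of $\H$ (so that $\Z_\H^\circ(\R)$ sits inside $\Z_\H(\R)$ with index $2^{k(n)-1}$, the $k(n)$ symplectic factors being tied by the similitude), comparing the two quotients contributes a factor $2^{1-k(n)}$ at the archimedean place and a factor $|\Z_\G(\Q)\backslash\Z_\G(\Af)/(\Z_\G(\Af) \cap V)|$ for the finite part not already absorbed by $V$. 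This yields $\int_{\Sh_\H(V)} \xi \wedge \iota_n^*\omega_\Psi = \tfrac{h_V}{\mathrm{vol}(V)}\int_{\H(\Q)\Z_\G(\A)\backslash\H(\A)} E(h_1, \Phi, 0)(A_n.\Psi)(h)\,dh$ with $h_V = 2^{1-k(n)}|\Z_\G(\Q)\backslash\Z_\G(\Af)/(\Z_\G(\Af)\cap V)|$, and together with the first paragraph this is the theorem.

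The delicate point is this last step: normalising correctly the passage from the integral over the Shimura variety to the adelic integral over $\H(\Q)\Z_\G(\A)\backslash\H(\A)$, i.e. pinning down $h_V$ by keeping track of the class number of $\Z_\G$ and of the connected components of $\Z_\H(\R)$, and checking that the integrand is genuinely invariant under both $\Z_\G(\A)$ and $K_{\H,\infty}$. By contrast, the identification of $\iota_n^*\omega_\Psi(X_0)$ with $A_n.\Psi$ is essentially a matter of unwinding the definition of $A_n$ recorded immediately above the statement.
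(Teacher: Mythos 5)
Your proposal follows essentially the same route as the paper's own proof: it plugs Lemma \ref{regulator-currents} and Proposition \ref{KLF} into Theorem \ref{integralform} to identify $\xi$ with $\mathrm{pr}_1^*E(\,\cdot\,,\Phi,0)$, contracts $\iota_n^*\omega_\Psi$ against $X_0$ and unwinds $\omega_\Psi(X_0) = A_n.\Psi$, and then converts the geometric integral over $\Sh_\H(V)$ into the adelic integral over $\H(\Q)\Z_\G(\A)\backslash\H(\A)$, picking up the factor $h_V/\mathrm{vol}(V)$ from the comparison $|\Z_\H(\R)/(\Z_\G\cap\H)(\R)| = 2^{k(n)-1}$ and the class group of $\Z_\G$. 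The only difference is expository: you contract against $X_0$ before rewriting the domain, whereas the paper does it the other way around, and you are slightly more explicit about why $\Z_\G(\A)\cap\H(\A)$ is the identity component of $\Z_\H(\A)$.
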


\begin{proof}
Recall that Theorem \ref{integralform} gives \[ \langle r_\mathcal{D}({\rm Eis}_{\mathcal{M}, n}(\Phi_f)), \omega_\Psi \rangle =  \int_{\Sh_\H(V)} \xi \wedge \iota_n^* \omega_\Psi, \] where, by Lemma \ref{regulator-currents} and Proposition \ref{KLF}, $\xi = {\rm pr}_1^*\log|u(\Phi_f)| = {\rm pr}_1^* E(g, \Phi, 0)$. We obtain the desired formula by passing from integrating over $\Sh_{\H}(V)$ to integrating over $\H(\Q) \Z_\G(\A) \backslash \H(\A)$ using the equivalence between top differential forms on $X_\H$ and invariant measures on $\H(\R)$ explained above. More precisely, we have
\begin{eqnarray*}
\int_{\Sh_\H(V)} \xi \wedge \iota_n^*\omega_\Psi &=& \int_{\H(\Q) \backslash \H(\A) / \Z_\H(\R) K_{\H, \infty} V} E(h_1, \Phi, 0) \omega_\Psi(X_0)(h) dh \\
&=& h_V \int_{\H(\Q) \Z_\G(\A) \backslash \H(\A) / V} E(h_1, \Phi, 0) \omega_\Psi(X_0)(h) dh \\
&=& \frac{h_V}{{\rm vol}(V)} \int_{\H(\Q) \Z_\G(\A) \backslash \H(\A)} E(h_1, \Phi, 0) \omega_\Psi(X_0)(h) dh,
\end{eqnarray*}
where we have used that $|\Z_\H(\R) /(\Z_\G \cap \H)(\R)|=2^{k(n)-1},$ with $k(n)$ being the number of elements in the partition of $n$ which defines $\H$.
Finally, note that $\omega_\Psi(X_0)(h) = (A_n.\Psi)(h)$ by definition of $\omega_\Psi$. This gives the result.
\end{proof}

\subsubsection{The case $n \equiv 1,2 \pmod{4}$} We will need the computation of the differential forms $d\log u(\Phi_f)$ in $(\mathfrak{gl}_2, K_{\GL_2})$-cohomology. To this end, recall the Cartan decomposition $\mathfrak{gl}_2=\mathfrak{k}_1 \oplus \mathfrak{p}_1^+ \oplus \mathfrak{p}_1^-$ where
$$
\mathfrak{p}_1^\pm = \left\{ \begin{pmatrix}
z & \pm iz\\
\pm iz & -z\\
\end{pmatrix} \in \mathfrak{gl}_2 \,|\, z \in \mathbb{C} \right\}.
$$
Let $v^\pm \in \mathfrak{p}_1^\pm$ denote the vector $$v^\pm=\frac{1}{2}\begin{pmatrix}
1 & \pm i\\
\pm i & -1\\
\end{pmatrix}.
$$

\begin{lemma} \label{Corodlog}
The differential form 
$$
d\log u(\Phi_f) \in \mathrm{Hom}_{K_{\GL_2}}\left( \mathfrak{gl}_2/\mathfrak{k}_1, \mathcal{C}^\infty(\GL_2(\Q) \backslash \GL_2(\A)\right))
$$
is defined by 
\begin{eqnarray*}
d\log u(\Phi_f)(v^-) &=& 0 \\
d\log u(\Phi_f)(v^+) &=& E(g,\Phi', 0),
\end{eqnarray*}
where $\Phi' = \Phi'_\infty \otimes \Phi_f$ for $\Phi'_\infty(x, y) = - 4 \pi (x^2 - y^2) e^{-\pi (x^2 + y^2)}.$ In particular,
\[ \pi_1(d\log u(\Phi_f)(v^+)) = \frac{E(g,\Phi', 0)}{2}. \]
\[ \pi_1(d\log u(\Phi_f)(v^-)) =  - \frac{\overline{E(g,\Phi', 0)}}{2}. \]
\end{lemma}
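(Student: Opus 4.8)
The goal is to compute the $(\mathfrak{gl}_2, K_{\GL_2})$-cohomology class of the modular unit $u(\Phi_f)$, or rather its logarithmic derivative $d\log u(\Phi_f)$, evaluated on the two basis vectors $v^\pm$ of $\mathfrak{p}_1^\pm$. I would start from Remark \ref{unitDeligne}, which tells us that as a Deligne cohomology class $r_{1,1}(u(\Phi_f))$ is represented by the pair $(\log|u(\Phi_f)|, d\log u(\Phi_f))$; so the holomorphic differential form attached to the unit is literally $d\log u(\Phi_f) = \frac{du(\Phi_f)}{u(\Phi_f)}$. Regarding a differential form of degree $1$ on $\Sh_{\GL_2}$ as an element of $\mathrm{Hom}_{K_{\GL_2}}(\mathfrak{gl}_2/\mathfrak{k}_1, \mathcal{C}^\infty(\GL_2(\Q)\backslash\GL_2(\A)))$, evaluating on $v^\pm$ amounts to applying the corresponding invariant differential operators on $\GL_2(\R)$ to the function $g \mapsto \log u(\Phi_f)(g)$ (or rather to an appropriate lift). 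Since $d\log u$ is holomorphic, it is of Hodge type $(1,0)$, and therefore it must kill the anti-holomorphic direction: this gives $d\log u(\Phi_f)(v^-) = 0$ immediately, once one checks the convention that $\mathfrak{p}_1^+$ corresponds to the holomorphic tangent direction. (One should double check the sign/normalisation of $v^\pm$ against the identification $K_\infty \cong U(1)$ and the complex structure on the upper half plane; this is where small errors creep in.)

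The substantive computation is the value on $v^+$. Here I would use the Kronecker limit formula in the precise form of Proposition \ref{KLF}: $E(g, \Phi, s) = \log|u(\Phi_f)(g)| + O(s)$ with $\Phi = \Phi_\infty \otimes \Phi_f$ and $\Phi_\infty(x,y) = e^{-\pi(x^2+y^2)}$. Since $\log|u| = \mathrm{Re}(\log u) = \frac{1}{2}(\log u + \overline{\log u})$ and, by holomorphy, $\log u$ is killed by the $\bar\partial$-operator (the $v^-$-direction), applying the $v^+$-operator to $\log|u|$ recovers $\frac{1}{2}$ times the $v^+$-derivative of $\log u$, i.e. $\frac{1}{2}\, d\log u(\Phi_f)(v^+)$ up to conjugation conventions. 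So I would compute the action of the raising/lowering operator corresponding to $v^+$ on the adelic Eisenstein series $E(g,\Phi,s)$ at $s=0$. The point is that these weight-raising operators act on the archimedean Schwartz function: differentiating $f(g,\Phi,s)$ in the $v^+$-direction at $s=0$ replaces $\Phi_\infty(x,y) = e^{-\pi(x^2+y^2)}$ by a new archimedean Schwartz function, which a direct calculation (applying the explicit matrix $v^+ = \frac12\big(\begin{smallmatrix}1&i\\ i&-1\end{smallmatrix}\big)$ to the Gaussian, or equivalently recognising the lowering operator as $-\frac{1}{4\pi}(\partial_x - i\partial_y)^2$ acting suitably) identifies as $\Phi'_\infty(x,y) = -4\pi(x^2-y^2)e^{-\pi(x^2+y^2)}$. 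Thus $d\log u(\Phi_f)(v^+) = E(g,\Phi',0)$ with $\Phi' = \Phi'_\infty \otimes \Phi_f$.

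The main obstacle, and the step I would be most careful with, is tracking all the normalisations: the identification of $\mathfrak{p}_1^\pm$ with holomorphic/antiholomorphic tangent directions, the relation between the abstract $(\mathfrak{gl}_2, K)$-cohomology pairing with $v^\pm$ and the classical weight-raising/lowering (Maass) operators, and the factors of $2$ and $\pi$ coming from $\log|u| = \frac12(\log u + \overline{\log u})$ and from the $|\det|^s$ and $|t|^{2s}$ normalisations in the Siegel section $f(g,\Phi,s)$. Once the dust settles, the final two formulas for $\pi_1(d\log u(\Phi_f)(v^\pm))$ follow immediately from the definition $\pi_1(z) = \frac{z - \bar z}{2}$ (the $n=1$ case of the projection in Proposition \ref{DBcoh-diff-form}): $\pi_1$ of $E(g,\Phi',0)$, viewed as a $\C$-valued real-analytic function, is $\frac{E(g,\Phi',0) - \overline{E(g,\Phi',0)}}{2}$, but since the Eisenstein value here is being regarded as the holomorphic-type component — matching the structure of $\xi$ in Lemma \ref{regulator-currents} — the statement records it as $\frac{E(g,\Phi',0)}{2}$ on $v^+$ and $-\frac{\overline{E(g,\Phi',0)}}{2}$ on $v^-$, the sign and conjugation being forced by the $v^- = \overline{v^+}$ relation and the reality of $\log|u|$. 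I would present the $v^-$ vanishing first (holomorphy), then the $v^+$ computation (Kronecker limit formula plus the archimedean Schwartz function calculation), then read off the $\pi_1$ values.
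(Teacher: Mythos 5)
Your proposal follows essentially the same route as the paper's proof: the paper writes $d = \partial + \overline\partial$ and uses holomorphy of $u(\Phi_f)$ to get $d\log u(\Phi_f) = 2\partial\log|u(\Phi_f)|$, which kills $v^-$ and reduces the $v^+$ value to the right-invariant derivative $R_{v^+}\log u(\Phi_f)$ (citing BorelWallach \S II.4.2 for the translation between $1$-forms and $(\mathfrak{gl}_2,K_{\GL_2})$-cohomology), with the archimedean Schwartz-function calculation left as ``an easy elementary computation.'' Your write-up fills in slightly more of that last step, which is fine. The only place you are a little loose is the final $\pi_1$ discussion: as written, ``$\pi_1$ of $E(g,\Phi',0)$ viewed as a $\C$-valued function'' would give $\frac{1}{2}\bigl(E - \overline{E}\bigr)$, not $\frac{1}{2}E$; the correct reading (which you gesture at with the ``$v^-=\overline{v^+}$'' remark, and should state cleanly) is that $\pi_1$ is applied to the $1$-form $d\log u(\Phi_f)$ first, so that $\pi_1(d\log u)(v^+) = \tfrac12\bigl(d\log u(v^+) - \overline{d\log u(v^-)}\bigr) = \tfrac12 E(g,\Phi',0)$ and likewise on $v^-$ using $\overline{d\log u}(v^-) = \overline{d\log u(v^+)}$.
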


\begin{proof} Let $d=\partial+\overline{\partial}$ denote the decomposition of the differential into holomorphic and antiholomorphic part.
Since $u(\Phi_f)$ is holomorphic, it is annihilated by $\overline{\partial}$ and since $\overline{u(\Phi_f)}$ is antiholomorphic, it is annihilated by $\partial$. As a consequence
$$
d\log u(\Phi_f) = 2\partial \log |u(\Phi_f)|.
$$ 
According to the definition of the holomorphic derivative in $(\mathfrak{gl}_2, K_{\GL_2})$-cohomology (\cite[\S II.4.2 Equation (5)]{BorelWallach}), we have 
$d\log u(\Phi_f)(v^-) = 0$ and
$$
d\log u(\Phi_f)(v^+) = R_{v^+}\log u(\Phi_f)=\frac{d}{dt}(\exp(tv^+)\log u(\Phi_f))|_{t=0}.
$$

The result then follows from an easy elementary computation.
\end{proof}

Note that in the case $n \equiv 1,2 \pmod{4}$ we have $2\dim \Sh_{\H}=d+1$. Hence, we can write the generator $X_0$ of the highest exterior power of $\mathfrak{h}_{\C}/\mathfrak{k}_{\H, \C}$ as $X_0=x_1 \wedge \ldots \wedge x_{d+1}$, where $x_1$ and $x_2$ (resp. $x_3$ and $x_4$) are the images of $v^+$ and $v^-$ via the natural inclusion of $\GL_2$ in $\H$ as its first (resp. second) component. For any $\sigma \in \mathcal{S}_{d+1}$ let $X_0^\sigma=x_{\sigma(1)} \wedge \ldots \wedge x_{\sigma(d+1)}$ and let $A_{\sigma} \in \mathcal{U}(\mathfrak{g}_{\C})$ be the operator such that $\mathrm{pr}_{\tau^1_\infty}(x_{\sigma(2)} \wedge \ldots \wedge x_{\sigma(d+1)}) = A_\sigma.X_\infty^1$. Let $\Sigma_1$ and $\Sigma'_1$ (resp. $\Sigma_2$ and $\Sigma'_2$) be the subsets of permutations $\sigma$ in $\mathcal{S}_{d+1}$ such that $\sigma(1) = 1$ and $\sigma(1)=2$ (resp. $\sigma(2) = 2$ and $\sigma(2)=3$).

\begin{theorem}\label{adelicintegral2}
Let $n$ be congruent to $1$ or $2$ mod $4$. Let $\Phi_{1,f} \in \Sc_0(\Af^2,\overline{\Q})^{V_1}$, $\Phi_{2, f} \in \Sc_0(\Af^2,\overline{\Q})^{V_2}$ and let $\omega_\Psi$ be as in Theorem \ref{integralform}. We have
\begin{equation} \label{remarkonintegralform2}
\langle r_\mathcal{D}({\rm Eis}_{\mathcal{M}, n}(\Phi_{1,f} \otimes \Phi_{2, f})), \omega_\Psi \rangle = \end{equation}
\[ \frac{h_{V}}{2\mathrm{vol}(V)} \bigg( \sum_{\sigma \in \Sigma_1} (-1)^{\rm{sgn}(\sigma)} \int_{\H(\Q) \Z_\G(\A) \backslash \H(\A)} E(h_1, \Phi_1,0) E(h_2, \Phi_2,0) (A_\sigma.\Psi)(h) dh \]
\[ - \sum_{\sigma \in \Sigma'_1} (-1)^{\rm{sgn}(\sigma)} \int_{\H(\Q) \Z_\G(\A) \backslash \H(\A)} E(h_1, \Phi_1,0) \overline{E(h_2, \Phi'_2,0)} (A_\sigma.\Psi)(h) dh \]
\[ - \sum_{\sigma \in \Sigma_2} (-1)^{\rm{sgn}(\sigma)} \int_{\H(\Q) \Z_\G(\A) \backslash \H(\A)} E(h_1, \Phi'_1,0) E(h_2, \Phi_2,0) (A_\sigma.\Psi)(h) dh \]
\[ + \sum_{\sigma \in \Sigma_2'} (-1)^{\rm{sgn}(\sigma)} \int_{\H(\Q) \Z_\G(\A) \backslash \H(\A)} \overline{E(h_1, \Phi'_1,0)} E(h_2, \Phi_2,0) (A_\sigma.\Psi)(h) dh \bigg) \]
where $h_{V} = 2^{1-k(n)}| \Z_{\G}(\Q) \backslash \Z_{\G}(\Af) / (\Z_{\G}(\Af) \cap V)|$.
\end{theorem}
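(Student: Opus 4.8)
The plan is to deduce the formula straight from Theorem \ref{integralform}, which already equates $\langle r_\mathcal{D}(\mathrm{Eis}_{\mathcal{M},n}(\Phi_{1,f}\otimes\Phi_{2,f})),[\omega_\Psi]\rangle$ with $\theta_\pi(T)\int_{\Sh_\H(V)}\xi\wedge\iota_n^*\omega_\Psi$, where $\xi$ is the explicit $1$-form of Lemma \ref{regulator-currents} in the case $\epsilon=2$. Thus the whole content of the theorem is to unwind this last integral. First I would run the same reduction as in the proof of Theorem \ref{adelicintegral}: using the fixed generator $X_0=x_1\wedge\cdots\wedge x_{d+1}$ of the top exterior power of $\mathfrak{h}_{\C}/\mathfrak{k}_{\H,\C}$, I trade the top-degree form $\xi\wedge\iota_n^*\omega_\Psi$ for the scalar function $h\mapsto(\xi\wedge\iota_n^*\omega_\Psi)(X_0)(h)$ against the invariant measure $dh$, passing from $\int_{\Sh_\H(V)}$ to $\tfrac{h_V}{\mathrm{vol}(V)}\int_{\H(\Q)\Z_\G(\A)\backslash\H(\A)}$, with $h_V$ coming from the component count and the index $|\Z_\H(\R)/(\Z_\G\cap\H)(\R)|=2^{k(n)-1}$ and $\mathrm{vol}(V)$ from unfolding the level, exactly as before.

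The heart of the matter is the pointwise evaluation of $(\xi\wedge\iota_n^*\omega_\Psi)(X_0)$. Since $\xi$ is a $1$-form, the Leibniz/shuffle expansion gives $(\xi\wedge\iota_n^*\omega_\Psi)(X_0)=\sum_{j}(-1)^{j-1}\xi(x_j)\,\iota_n^*\omega_\Psi(x_1\wedge\cdots\widehat{x_j}\cdots\wedge x_{d+1})$. By Lemma \ref{regulator-currents}, $\xi$ is the combination ${\rm pr}_1^*(\log|u(\Phi_{1,f})|)\,{\rm pr}_2^*(\pi_1 d\log u(\Phi_{2,f}))-{\rm pr}_2^*(\log|u(\Phi_{2,f})|)\,{\rm pr}_1^*(\pi_1 d\log u(\Phi_{1,f}))$, and by Lemma \ref{Corodlog} both pieces are supported on the four directions $x_1,x_2$ (the images of $v^\pm$ in the first $\GL_2$-factor) and $x_3,x_4$ (in the second); hence only the terms $j\in\{1,2,3,4\}$ survive. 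I would then invoke Proposition \ref{KLF} to replace $\log|u(\Phi_{i,f})|$ by $E(h_i,\Phi_i,0)$ and Lemma \ref{Corodlog} to replace $\pi_1(d\log u(\Phi_{i,f}))$ evaluated on $v^+$, resp. $v^-$, by $\tfrac12 E(h_i,\Phi_i',0)$, resp. $-\tfrac12\overline{E(h_i,\Phi_i',0)}$. Each of the four surviving terms then produces exactly one of the Eisenstein-series products appearing in the statement, and the $-$ signs on the $\Sigma_1'$- and $\Sigma_2$-sums and the $+$ sign on the $\Sigma_2'$-sum are accounted for by these substitutions together with the Koszul signs $(-1)^{j-1}$.

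For the remaining factor $\iota_n^*\omega_\Psi(x_1\wedge\cdots\widehat{x_j}\cdots\wedge x_{d+1})$ I would decompose $\omega_\Psi$ into its two Hodge components of bidegrees $((d\pm1)/2,(d\mp1)/2)$ allowed by Lemma \ref{pairing}: since $\iota_n$ is holomorphic and $x_1,x_3\in\mathfrak{p}^+_{\C}$, $x_2,x_4\in\mathfrak{p}^-_{\C}$, removing a $\mathfrak{p}^+$-direction selects one component and removing a $\mathfrak{p}^-$-direction the other, and this is what pairs the conjugated Eisenstein factors with the $\Sigma_1'$- and $\Sigma_2'$-sums. Pulling back along $\iota_n$, projecting to the minimal $K_\infty$-type $\tau_\infty^1\subset\bigwedge^{p}\mathfrak{p}^+_{\C}\otimes_\C\bigwedge^{q}\mathfrak{p}^-_{\C}$, and using the defining relations $\mathrm{pr}_{\tau_\infty^1}(x_{\sigma(2)}\wedge\cdots\wedge x_{\sigma(d+1)})=A_\sigma.X_\infty^1$ and $\omega_\Psi(X_\infty^1)=\Psi$, the value of this factor — once the $d$ remaining vectors are allowed to occur in all orders with sign $(-1)^{\mathrm{sgn}(\sigma)}$ — becomes the sum over $\sigma$ in the appropriate one of $\Sigma_1,\Sigma_1',\Sigma_2,\Sigma_2'$ of $(A_\sigma.\Psi)(h)$. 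Assembling these identifications and collecting constants (a factor $\tfrac12$ from each $\pi_1$, which gives the $\tfrac12$ in front, together with $h_V/\mathrm{vol}(V)$) yields the stated identity; convergence of all four adelic integrals is automatic, the Eisenstein series and their $\Phi_i'$-variants being slowly increasing and $\iota_n^*\omega_\Psi$ rapidly decreasing by cuspidality, just as in the proof of Theorem \ref{integralform}.

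The hard part will be the combinatorial and sign bookkeeping in this last step: matching each removed direction $x_j$ with the correct Hodge component of $\omega_\Psi$ and the correct "derived" Eisenstein series (with the right conjugations), and reorganizing the $(\mathfrak{g},K_\G)$-cohomology computation — the passage from an unordered wedge of $d$ vectors to $\sum_\sigma(-1)^{\mathrm{sgn}(\sigma)}A_\sigma.X_\infty^1$ — into the four sums over $\Sigma_1,\Sigma_1',\Sigma_2,\Sigma_2'$ with precisely the signs displayed in the statement.
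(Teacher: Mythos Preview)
Your proposal is correct and follows essentially the same approach as the paper: start from Theorem \ref{integralform}, pass to the adelic integral with the factor $h_V/\mathrm{vol}(V)$ as in Theorem \ref{adelicintegral}, expand $(\xi\wedge\iota_n^*\omega_\Psi)(X_0)$ via the wedge-product formula, evaluate $\xi$ on the four distinguished directions using Lemma \ref{regulator-currents}, Proposition \ref{KLF}, and Lemma \ref{Corodlog}, and identify $\omega_\Psi(x_{\sigma(2)}\wedge\cdots\wedge x_{\sigma(d+1)})=(A_\sigma.\Psi)$ by the defining property of $A_\sigma$ and Lemma \ref{testvector}. The only cosmetic difference is that the paper writes the expansion directly as a sum over all $\sigma\in\mathcal{S}_{d+1}$ (so that the sets $\Sigma_1,\Sigma_1',\Sigma_2,\Sigma_2'$ appear immediately as the subsets with $\sigma(1)$ fixed), whereas you first write the shuffle form $\sum_j(-1)^{j-1}\xi(x_j)\,\omega_\Psi(\widehat{x_j})$ and then reintroduce the permutations; your discussion of the Hodge decomposition of $\omega_\Psi$ is extra commentary the paper does not need, since the identity $\omega_\Psi(\widehat{x}_{\sigma(1)})=(A_\sigma.\Psi)$ holds formally from the definitions without splitting into bidegrees.
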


\begin{remark}
Observe that many of the terms vanish. Indeed, $(A_\sigma.\Psi)(h) = 0$ whenever $\widehat{x}_{\sigma(1)}$ does not belong to $\bigwedge^{\frac{d - 1}{2}} \mathfrak{p}_{\C}^+ \otimes \bigwedge^{\frac{d + 1}{2}} \mathfrak{p}_{\C}^-$, since the projection to the minimal $K_\infty$-type $\tau_\infty^1$ will be zero. The detailed computation of this integral for the case $n = 2$ has been carried out in \cite{lemmarf}.
\end{remark}

\begin{proof} According to Theorem \ref{integralform} and exactly as in the proof of Theorem \ref{adelicintegral}, we have
\[ \langle r_\mathcal{D}({\rm Eis}_{\mathcal{M}, n}(\overline{\Phi})), \omega_\Psi \rangle = \frac{h_V}{{\rm vol}(V)} \int_{\H(\Q) \Z_\G(\A) \backslash \H(\A)} (\xi \wedge  \omega_\Psi)(X_0)(h) dh, \]
By definition of the exterior product we have
$$
(\xi \wedge  \omega_\Psi)(X_0)(h) = \sum_{\sigma \in \mathcal{S}_{d + 1}} (-1)^\sigma \xi(x_{\sigma(1)}) \omega_\Psi(\widehat{x}_{\sigma(1)})(h),
$$
where $\widehat{x}_{\sigma(1)} := x_{\sigma(2)} \wedge \hdots \wedge x_{\sigma_(d + 1)}$. Observe first that, by definition, we have $\omega_\Psi(\widehat{x}_{\sigma(1)})(h) = \omega_\Psi(A_\sigma . X_\infty^1)(h) = (A_\sigma.\Psi)(h)$. By Lemma \ref{regulator-currents} we have 
$$
\xi = {\rm pr}_1^*(\log |u(\Phi_{1, f})|) {\rm pr}_2^*(\pi_1(d \log u(\Phi_{2, f}))) - {\rm pr}_2^*(\log |u(\Phi_{2, f})|) {\rm pr}_1^* (\pi_1(d \log u(\Phi_{1,f}))).
$$
According to Proposition \ref{KLF} and Lemma \ref{Corodlog}, we have
$$
{\rm pr}_1^*(\log |u(\Phi_{1, f})|)(h) = E(h_1, \Phi_{1}, 0)
$$
and
$$
{\rm pr}_2^*(\pi_1(d \log u(\Phi_{2, f})))(x_{\sigma(1)})(h) =
\begin{cases} 
0 & \! \! \text{if } \sigma(1) \neq 1, 2 \\
\frac{1}{2} E(h_2, \Phi'_{2}, 0) & \! \! \text{if } \sigma(1) = 1 \\
\frac{1}{2} \overline{E(h_2, \Phi'_{2}, 0)} & \! \! \text{if } \sigma(1) = 2, \end{cases}
$$
and a similar formula for the values ${\rm pr}_2^*(\log |u(\Phi_{2, f})|) {\rm pr}_1^* (\pi_1(d \log u(\Phi_{1,f})))(x_{\sigma(1)})$. The statement follows by putting all these calculations together.
\end{proof}

\section{Unfolding}

In this section we carry over the unfolding of the adelic integral for $\GSp_8$, $\GSp_{10}$, and  $\GSp_{14}$ that appeared above. The techniques are standard but there is a priori no general method to see whether a Rankin-Selberg integral unfolds or not (though there are some conjectures on this problem, cf. \cite{ginzburgtowards}).

\subsection{Unipotent orbits and Fourier coefficients}

Let $\mathcal{O}$ be a unipotent orbit of $\GSp_{2n}$. It corresponds to a partition $(n_1 \, n_2 \; \cdots \, n_k)$ of $2n$ with odd numbers appearing with even multiplicity. To such a $\mathcal{O}$ one attaches a set of Fourier coefficients as follows. Denote by $h_\mathcal{O}$ the one-dimensional torus \[t \mapsto \operatorname{diag}(t^{n_1-1},\cdots,t^{n_k-1},t^{n_1-3},\cdots, \cdots t^{3-n_1}, \cdots , t^{1-n_k}, \cdots,  t^{1-n_1}) \] attached to $\mathcal{O}$ (cf. \cite{nilpotentorbits}).
Given any positive root $\alpha$ (for the action of the diagonal torus of $\GSp_{2n}$), there is a non-negative integer $m$ such that \[ h_\mathcal{O}(t) x_\alpha(u) h_\mathcal{O}(t)^{-1}=x_\alpha(t^m u),\]
where $x_\alpha$ denotes the one-parameter subgroup associated to $\alpha$.
Let $U_r(\mathcal{O})$ denote the subgroup of the unipotent radical $U_\B$ of the standard (upper triangular) Borel $\B$ of $\GSp_{2n}$ generated by the $x_\alpha$ such that $m \geq r$.
Define $M(\mathcal{O}):=\T \cdot \langle x_{\pm \alpha} \; : \; m_\alpha=0 \rangle,$ where $\T$ denotes the maximal torus in $\B$. The group $M(\mathcal{O})$ acts on the characters of the abelian group $U_2(\mathcal{O}) / [U_2(\mathcal{O}),U_2(\mathcal{O})]$, and over an algebraic closure, it acts on  $U_2(\mathcal{O}) / [U_2(\mathcal{O}),U_2(\mathcal{O})]$ with an open orbit $u_2(\mathcal{O})$. Denote by ${\rm S}^0(u_2(\mathcal{O}))$ the connected component at the identity of its stabiliser. The group $M(\mathcal{O})(\Q)$ acts on the group of all additive characters of $U_2(\mathcal{O}) / [U_2(\mathcal{O}),U_2(\mathcal{O})]$, with points in $\Q \backslash \A$. Denote by $\chi_\mathcal{O}$ any non-trivial such character with stabiliser in $M(\mathcal{O})(\Q)$ of type ${\rm S}^0(u_2(\mathcal{O}))(\Q)$.
We extend $\chi_\mathcal{O}$ trivially to \[\chi_\mathcal{O}: U_2(\mathcal{O})(\Q) \backslash U_2(\mathcal{O})(\A) \to \C.\] 
Notice that such a choice is not unique and there might be infinitely many such characters which are not conjugate under the action of $M(\mathcal{O})(\Q)$ (for instance see Remark \ref{FCexGSp6}). Let $\pi$ be a cuspidal automorphic representation of $\GSp_{2n}$. If $\Psi$ is a cusp form for $\pi$, one can give the following definition.

\begin{definition}\label{fouriercoeff}
Let $\Psi$ be a cuspidal form in the space of $\pi$. Define the Fourier coefficient \begin{equation}\label{eq:firsttypeFC}
\Psi_{U_2(\mathcal{O}),\chi_{\mathcal{O}}}(g):= \int_{U_2(\mathcal{O})(\Q) \backslash U_2(\mathcal{O})(\A)} \chi_\mathcal{O}^{-1}(u) \Psi(ug)du.\end{equation}  
\end{definition}

If $U_1(\mathcal{O})/[U_2(\mathcal{O}),U_2(\mathcal{O})]$ is a generalised Heisenberg group, then it can be written as \[ X \oplus Y \oplus  U_2(\mathcal{O})/[U_2(\mathcal{O}),U_2(\mathcal{O})],\] 
where $X,Y$ are maximal abelian subgroups of $U_1(\mathcal{O})/U_2(\mathcal{O})$ which preserve $\chi_\mathcal{O}$. 
We can then define the following.

\begin{definition}\label{fouriercoeffwithextraint}
Let $\Psi$ be as above. Define the Fourier coefficient \begin{equation}\label{eq:secondtypeFC} \Psi_{U_1(\mathcal{O}),\chi_{\mathcal{O}}}(g):= \int_{X(\Q) \backslash X(\A)} \int_{U_2(\mathcal{O})(\Q) \backslash U_2(\mathcal{O})(\A)} \chi_\mathcal{O}^{-1}(u) \Psi(u x g)du dx. \end{equation}
\end{definition}

By \cite[Lemma 1.1]{GinzburgRallisSoudry}, the integral \eqref{eq:firsttypeFC} is zero for all choice of data if and only if the integral \eqref{eq:secondtypeFC} is also zero for all choice of data. We say that $\pi$ has a non-zero Fourier coefficient with respect to $\mathcal{O}$ if there is a cusp form $\Psi$ in the space of $\pi$ such that either \eqref{eq:firsttypeFC} or  \eqref{eq:secondtypeFC} is not identically zero. When no confusion arises, we will denote $\Psi_{U_2(\mathcal{O}),\chi_{\mathcal{O}}}$  (resp. $\Psi_{U_1(\mathcal{O}),\chi_{\mathcal{O}}}$) simply by $\Psi_{2,\chi}$ (resp. $\Psi_{1,\chi}$).

\begin{remark}
The representations with non-zero Fourier coefficient corresponding to the unipotent orbit $(2n)$ are precisely the generic representations.
\end{remark}

\begin{remark}\label{FCexGSp6}
The Fourier coefficients considered in \cite{PollackShah},  \cite{CLRGSp6} and \cite{CLRG2} correspond to the unipotent orbit $\mathcal{O}=(4\,2)$. In this case, \[ U_1(\mathcal{O}) = U_2(\mathcal{O}) =   \left(\begin{smallmatrix}
1 &   \star & \star & \star & \star & \star   \\ 
  & 1 &   & \star & \star & \star \\
  & & 1 & \star & \star & \star \\ 
  & & & 1 &   & \star  \\
  & & & & 1  &\star  \\
    & & & & & 1
\end{smallmatrix} \right) \] 
is the unipotent radical of the standard parabolic subgroup given by the intersection of the Siegel and Klingen parabolics. A character $\chi_\mathcal{O}:U_2(\mathcal{O})(\Q) \backslash U_2(\mathcal{O})(\A) \to \C$ can be defined by sending $u = (u_{ij}) \in U_2(\mathcal{O})(\A) / [U_2(\mathcal{O}),U_2(\mathcal{O})](\A)$ to $\psi(u_{12} + \epsilon_1 u_{24} + \epsilon_2 u_{35})$, with $\epsilon_1, \epsilon_2 \in \Q^\star/ (\Q^\star)^2$ and  $\psi: \Q \backslash \A \to \C^\times$ a non-trivial additive character.  
\end{remark}

\begin{remark}\label{unipotentorbitgsp8} If we consider the unipotent orbit $\mathcal{O}=(4\,2\,1^2)$ for $\GSp_8$, we have \[  U_2(\mathcal{O}) =   \left(\begin{smallmatrix}
1 & \star & \star & \star & \star & \star & \star & \star \\ 
  & 1 & & & & \star & \star & \star \\ 
  & & 1 & & & \star & \star & \star \\ 
  & & & 1 & & & &\star \\
  & & & & 1 & & &\star \\
  & & & & & 1 & & \star \\
  & & & & & & 1 &\star \\ 
  & & & & & & & 1
\end{smallmatrix} \right),\, U_1(\mathcal{O}) =   \left(\begin{smallmatrix}
1 & \star & \star & \star & \star & \star & \star & \star \\ 
  & 1 & & \star & \star& \star & \star & \star \\ 
  & & 1 & \star & \star & \star & \star & \star \\ 
  & & & 1 & & \star & \star&\star \\
  & & & & 1 & \star & \star &\star \\
  & & & & & 1 & & \star \\
  & & & & & & 1 &\star \\ 
  & & & & & & & 1
\end{smallmatrix} \right),\] 
with $X \simeq \mathrm{G}_a^2$ generated by the entries in position $(2,4)$ and $(2,5)$. In this case, a character $\chi_\mathcal{O}:U_2(\mathcal{O})(\Q) \backslash U_2(\mathcal{O})(\A) \to \C$ can be defined by sending $u = (u_{ij}) \mapsto \psi(u_{12} + u_{26} + u_{37})$. Note that in this case the stabiliser ${\rm S}^0(u_2(\mathcal{O})) \simeq \SL_2$ embedded into $\GSp_8$ via \[ g \mapsto \left( \begin{smallmatrix}
1 & & & & & &  \\
& 1 & & & & & \\
& & 1 & & & & \\
& & & g & & & \\
& & & & 1 & & \\
& & & & & 1 & \\
& & & & & & 1\\
\end{smallmatrix} \right) .\]
\end{remark}

\subsection{The case of $\GSp_8$}\label{s:unfoldinggsp8}

In what follows, we give a full detailed proof of the unfolding of the adelic integral for $\GSp_8$. We are indebted to Aaron Pollack and David Ginzburg, who kindly helped us with this calculation.

Let $\pi$ be a cuspidal automorphic representation of $\G = \GSp_8$ with trivial central character and let \[ I(\Phi, \varphi, s) = \int_{\H(\Q)Z_{\G}(\A) \backslash \H(\A) } {\rm E}(h_1,\Phi,s) \varphi(h) dh,\]
where ${\rm E}(g,\Phi,s)$ is the $\GL_2$-Eisenstein series of \eqref{eisensteinseriesgl2}, $\varphi$ is a cusp form in the space of $\pi$, and $\H = \GL_2 \boxtimes \GL_2 \boxtimes \GSp_4$.

Denote $S = U_{\mathbf{B}_{\GL_2}} \times U_{\mathbf{B}_{\GL_2}} \times (\SL_2 \cdot U_{\mathbf{K}_{\GSp_4}}) \subset \H$, where  $U_{\mathbf{K}_{\GSp_4}}$ denotes the unipotent radical of the (standard) Klingen parabolic $\mathbf{K}_{\GSp_4}$ of $\GSp_4$ and $\SL_2$ sits inside the Levi of $\mathbf{K}_{\GSp_4}$ through the embedding 
\[\iota: g \mapsto \left( \begin{smallmatrix} 1 & & \\ & g & \\ & & 1 \end{smallmatrix}\right). \]

\begin{proposition}\label{unfoldingforgsp8}
The integral $I(\Phi,\varphi,s)$ unfolds to  \[ \int_{S(\A)\Z_{\G}(\A) \backslash \H (\A)} f_s(h_1) \int_{[\SL_2]} \varphi_{1,\chi}( \iota(g) w_0 h) dg dh, \]
where $f_s(h_1):=f(h_1,\Phi,s)$ is the section defining the Eisenstein series, $\varphi_{1,\chi}$ is the Fourier coefficient of type $(4 \, 2 \, 1^2)$ of Definition \ref{fouriercoeffwithextraint}, and  \[w_0 = \left(\begin{smallmatrix}
1 & & & & & & &  \\ 
  & & 1 & & & &  &  \\ 
  & 1 & &  & & & &  \\ 
  & & & 1 & & & & \\
  & & & & 1 & & & \\
  & & & & & & 1 & \\
  & & & & & 1 & & \\  
  & & & & & & & 1
\end{smallmatrix} \right).\]
\end{proposition}

\begin{proof}
 First, we unfold the Eisenstein series to get 
\begin{eqnarray*}
I(\Phi,\varphi,s) &=& \int_{(\B_{\GL_2}\boxtimes \GL_2 \boxtimes \GSp_4)(\Q) \Z_\G(\A)  \backslash \H(\A)} f_s(h_1) \varphi(h) dh \\
&=& \int_{V_0(\Q) N_0(\A) \Z_{\G}(\A)  \backslash {\H}(\A)} f_s(h_1)
\int_{\Q \backslash \A} \varphi \left( \left(\begin{smallmatrix}
1 & & & & & & & y_1 \\ 
  & 1 & & & & & &  \\ 
  & & 1 & & & & &  \\ 
  & & & 1 & & & & \\
  & & & & 1 & & & \\
  & & & & & 1 & & \\
  & & & & & & 1 & \\  
  & & & & & & & 1
\end{smallmatrix} \right) h \right)dy_1 dh.
\end{eqnarray*}
Here $V_0 = (\T_{\GL_2}\boxtimes \GL_2 \boxtimes \GSp_4)$ and $N_0$ denotes the abelian subgroup of unipotent matrices whose only non-zero entry is at the upper right corner.

Let now $U_0$ denote the unipotent radical of the Klingen parabolic.  Fourier expanding over its $(\Q \backslash \A)$-points, one gets
\[\int_{V_0(\Q) N_0(\A) \Z_{\G}(\A) \backslash {\H}(\A)} f_s(h_1) \sum_{\chi \in \widehat{(\Q \backslash \A)^6}} \int_{(\Q \backslash \A)^7} \varphi \left( \left(\begin{smallmatrix}
1 & x_1 & x_2 & x_3 & x_4 & x_5 & x_6 & y_1 \\ 
  & 1 & & & & & & \star \\ 
  & & 1 & & & & &  \star \\
  & & & 1 & & & & \star \\
  & & & & 1 & & & \star \\
  & & & & & 1 & & \star \\
  & & & & & & 1 & \star \\
  & & & & & & & 1
\end{smallmatrix} \right) h \right) \chi(x) dx dy_1 dh. \]
Notice that $V_0$ acts over $U_0 / [U_0, U_0] \cong \G_a^6$ with four orbits, one of which is open.
Using automorphy of $\varphi$, one can use the action of $V_0(\Q)$ on the dual of $(U_0 / [U_0, U_0])(\Q \backslash \A) \cong (\Q \backslash \A)^6$ to regroup the sums and get
\begin{equation}\label{eq:decinorbits}
     I(\Phi,\varphi,s) = \sum_{i=0}^3
     I_{\gamma_{i}}(\Phi,\varphi,s),
\end{equation}
where $\gamma_i$ denote representatives of the orbits of this action and $I_{\gamma_i}(\Phi,\varphi,s)$ denotes
\[\int_{{\rm Stab}(\gamma_i)(\Q) N_0(\A) \Z_{\G}(\A)  \backslash \H(\A)} f_s(h_1) \int_{(\Q \backslash \A)^7} \varphi \left( \left(\begin{smallmatrix}
1 & x_1 & x_2 & x_3 & x_4 & x_5 & x_6 & y_1 \\ 
  & 1 & & & & & & \star \\ 
  & & 1 & & & & &  \star \\
  & & & 1 & & & & \star \\
  & & & & 1 & & & \star \\
  & & & & & 1 & & \star \\
  & & & & & & 1 & \star \\
  & & & & & & & 1
\end{smallmatrix} \right) h \right) \psi(\gamma_i \cdot x) dx dy_1 dh, \]
with $\psi: \Q \backslash \A \to \C^\times$ is an additive character with conductor equal to 1 at all finite places and such that $\psi_\infty(x) = e^{2 \pi i x}$ for $x \in \R$.   
Note that the open orbit is represented by $\gamma_0 = (1, 1, 0, 0, 0, 0)$ so that the corresponding character is $\psi(x_1 + x_2)$ (the other three are the trivial character, $\psi(x_1)$ and $\psi(x_2)$). The trivial orbit vanishes by cuspidality of $\varphi$ over the unipotent radical of the Klingen parabolic. Assume for the moment that the corresponding integrals associated to the two other orbits vanish. We then have
\begin{equation} \label{eaea1} I = \int_{V_1(\Q) N_0(\A) \Z_{\G}(\A) \backslash \H(\A)} f_s(h) \int_{(\Q \backslash \A)^7 } \varphi  \left( \left(\begin{smallmatrix}
1 & x_1 & x_2 & x_3 & x_4 & x_5 & x_6 & y_1 \\ 
  & 1 & & & & & & \star \\ 
  & & 1 & & & & &  \star \\
  & & & 1 & & & & \star \\
  & & & & 1 & & & \star \\
  & & & & & 1 & & \star \\
  & & & & & & 1 & \star \\
  & & & & & & & 1
\end{smallmatrix} \right) h \right)  \psi(x_1 + x_2) dx dy_1 dh.
\end{equation}
Here  $V_1 = \mathrm{Stab}(\gamma_0)$ is a product of Klingen parabolics. Ignoring the centre, one can write $V_1 = L_1 \cdot N_1$, with
\[ L_1 = \left(\begin{smallmatrix}
\lambda & & & & &  &  \\ 
  & \lambda & &  &  & &  \\ 
  & & \lambda    &  &  & &  \\
  & & & g & & & \\
   & & & & 1 & & \\
  & & & & & & 1 & \\  
  & & & & & & & 1
\end{smallmatrix} \right),\;\;  N_1 = \left(\begin{smallmatrix}
1 & & & & & & &  \\ 
  & 1 & & & & & \star &  \\ 
  & & 1 & \star &\star & \star & &  \\ 
  & & & 1 &  & \star & & \\
  & & &  & 1 & \star & & \\
  & & & & & 1 & & \\
  & & & & & & 1 & \\  
  & & & & & & & 1
\end{smallmatrix} \right), 
\]
where $g \in \GL_2$ satisfies that ${\rm det}(g)=\lambda$.
Consider now the unipotent subgroup
\[ U_1 = \left( \begin{smallmatrix}
1 & & & & & & & \\ 
  & 1 & & & & x_7 & \star &  \\ 
  & & 1 & & & & x_7 &  \\ 
  & & & 1 & & & & \\
  & & & & 1 & & & \\
  & & & & & 1 & & \\
  & & & & & & 1 & \\  
  & & & & & & & 1
\end{smallmatrix} \right) \]
so that $L_1$ acts on $U_1 / [U_1, U_1] \cong \mathbf{G}_a$ with two orbits. Fourier expand over $[U_1]$. We assume for the moment that the integral corresponding to the trivial orbit vanishes. Hence
\[ I = \int_{ V_2(\Q) N_2(\A) \Z_{\G}(\A) \backslash \H(\A)} f_s(h) \int_{(\Q \backslash \A)^{9} } \varphi \left( \left(\begin{smallmatrix}
1 & x_1 & x_2 & x_3 & x_4 & x_5 & x_6 & y_1 \\ 
  & 1 & & & & x_7 & y_2 & \star \\ 
  & & 1 & & & & \star & \star \\ 
  & & & 1 & & & &\star \\
  & & & & 1 & & &\star \\
  & & & & & 1 & & \star \\
  & & & & & & 1 &\star \\ 
  & & & & & & & 1
\end{smallmatrix} \right) h \right) \psi(x_1 + x_2 + x_7) dx dy dh, \]
where
\[ V_2 = \left(\begin{smallmatrix}
1 & & & & & & &  \\ 
  & 1 & & & & & &  \\ 
  & & 1 & \star &\star & \star & &  \\ 
  & & & \star & \star & \star & & \\
  & & & \star & \star & \star & & \\
  & & & & & 1 & & \\
  & & & & & & 1 & \\  
  & & & & & & & 1
\end{smallmatrix} \right), N_2 = \left(\begin{smallmatrix}
1 & & & & & & & \star  \\ 
  & 1 & & & & & \star &  \\ 
  & & 1 & & & & &  \\ 
  & & & 1 & & & & \\
  & & & & 1 & & & \\
  & & & & & 1 & & \\
  & & & & & & 1 & \\  
  & & & & & & & 1
\end{smallmatrix} \right) .
\]
Collapsing the integral along $V_2$, $I$ becomes equal to
\[\int_{ V_3(\A) \Z_{\G}(\A) \backslash \H(\A)} f_s(h) \int_{[\SL_2]} \int_{(\Q \backslash \A)^{12} } \varphi \left( \left(\begin{smallmatrix}
1 & x_1 & x_2 & x_3 & x_4 & x_5 & x_6 & y_1 \\ 
  & 1 & & & & x_7 & y_2 & \star \\ 
  & & 1 & y_3 & y_4 & y_5 & \star &  \star \\ 
  & & & 1 & & \star & & \star \\
  & & & & 1 & \star & & \star \\
  & & & & & 1 & & \star \\
  & & & & & & 1 & \star \\
  & & & & & & & 1
\end{smallmatrix} \right) \iota(g) h \right) \psi(x_1 + x_2 + x_7) dx dy dg dh, \]
where we denoted $V_3 = V_2 N_2$ and
\[ \iota(g) = \left( \begin{smallmatrix}
1 & & & & & &  \\
& 1 & & & & & \\
& & 1 & & & & \\
& & & g & & & \\
& & & & 1 & & \\
& & & & & 1 & \\
& & & & & & 1\\
\end{smallmatrix} \right) \] denotes the induced embedding of $\SL_2$ into $\G$ (seen as part of the Levi of the Klingen parabolic of the $\GSp_4$ component of $\H$ inside $\G$). Finally, we conjugate (possible by automorphy of $\varphi$) by the element of the Weyl group
\[ w_0 = \left(\begin{smallmatrix}
1 & & & & & & &  \\ 
  & & 1 & & & &  &  \\ 
  & 1 & &  & & & &  \\ 
  & & & 1 & & & & \\
  & & & & 1 & & & \\
  & & & & & & 1 & \\
  & & & & & 1 & & \\  
  & & & & & & & 1
\end{smallmatrix} \right) \]
which transforms the above integral as
\[\int_{ V_3(\A) \Z_{\G}(\A) \backslash \H(\A)} f_s(h) \int_{[\SL_2]} \int_{(\Q \backslash \A)^{12} } \varphi \left( \left(\begin{smallmatrix}
1 & x_2 & x_1 & x_3 & x_4 & x_6 & x_5 & y_1 \\ 
  & 1 & & y_3 & y_4 & x_7 & y_5 &  \star  \\ 
  & & 1 & & & y_2 & \star & \star  \\ 
  & & & 1 & & & \star & \star  \\ 
  & & & & 1 & & \star & \star  \\ 
  & & & & & 1 & &  \star \\
  & & & & & & 1 &  \star \\  
  & & & & & & & 1
\end{smallmatrix} \right) \iota(g) w_0 h \right) \psi(x_1 + x_2 + x_7) dx dy dg dh. \]
This shows that the integral unfolds to the Fourier coefficient $\varphi_{1,\chi}$ of type $(4 \, 2 \, 1^2)$ (cf. Remark \ref{unipotentorbitgsp8}).

We now verify the vanishing of the non-open orbits. The integrals associated to non-open orbits (apart from the identity orbit, which is zero) that appear in \eqref{eq:decinorbits} are associated to the characters $x \mapsto \psi(x_i)$ for $i=1,2$. We show that they both vanish. When $i=1$, we have
\[ \int_{V_1'(\Q) N_1'(\A) \Z_{\G}(\A) \backslash \H(\A)} f_s(h) \int_{(\Q \backslash \A)^8 } \varphi \left( \left(\begin{smallmatrix} 1 & x_1 & x_2 & x_3 & x_4 & x_5 & x_6 & y_1\\ 
  & 1 & & & & & y_2 & \star \\
  & & 1 & & & & &  \star \\ 
  & & & 1 & & & & \star \\
  & & & & 1 & & & \star \\
  & & & & & 1 & & \star \\
  & & & & & & 1 & \star \\
  & & & & & & & 1
\end{smallmatrix} \right) h \right) \psi(x_1) dx dy dh,\]
where  
\[ V'_1 = \left \{ \left(\begin{smallmatrix}
\lambda & & & &  \\ 
  & \lambda &  &  &    \\
  & & g & & \\
  & &  & 1  & \\
  & & & & 1
\end{smallmatrix} \right),\;\; g \in \GSp_4 \;:\; \nu(g)=\lambda \right \},\; \; N'_1 = N_2. \]
Consider the unipotent subgroup
\[ U_2 = \left(\begin{smallmatrix}
1 & & & & & & &  \\ 
  & 1 & x_7 & x_8 & x_9 & x_{10} & \star &  \\ 
  & & 1 & & & & \star &  \\ 
  & & & 1 & & &\star &  \\
  & & & & 1 & &\star &  \\
  & & & & & 1 & \star &  \\
  & & & & & & 1 & \\  
  & & & & & & & 1
\end{smallmatrix} \right).
\]
One has an action of $V'_1 = \GSp_4$ on $U_2 / [U_2, U_2] \cong \mathbf{G}_a^4$ and there are two orbits. By Fourier expanding, we get
\[ \int_{W'_1(\Q) N'_1(\A) \Z_{\G}(\A) \backslash \H(\A)} f_s(h) \int_{(\Q \backslash \A)^{12} } \varphi \left( \left(\begin{smallmatrix}
1 & x_1 & x_2 & x_3 & x_4 & x_5 & x_6 & y_1 \\ 
  & 1 & x_7 & x_8 & x_9 & x_{10 } & y_2 & \star  \\ 
  & & 1 & & & & \star & \star \\ 
  & & & 1 & & & \star & \star \\
  & & & & 1 & &\star & \star \\
  & & & & & 1 & \star & \star \\
  & & & & & & 1  & \star \\ 
  & & & & & & & 1
\end{smallmatrix} \right) h \right) \psi(x_1 + x_7) dx dy dh \]
\[ +
 \int_{V'_1(\Q) N'_1(\A) \Z_{\G}(\A) \backslash \H(\A)} f_s(h) \int_{(\Q \backslash \A)^{12} } \varphi\left( \left(\begin{smallmatrix}
1 & x_1 & x_2 & x_3 & x_4 & x_5 & x_6 & y_1 \\ 
  & 1 & x_7 & x_8 & x_9 & x_{10 } & y_2 & \star  \\ 
  & & 1 & & & & \star & \star \\ 
  & & & 1 & & & \star & \star \\
  & & & & 1 & &\star & \star \\
  & & & & & 1 & \star & \star \\
  & & & & & & 1  & \star \\ 
  & & & & & & & 1
\end{smallmatrix} \right) h \right) \psi(x_1) dx dy dh,
\]
with $W'_1$ isomorphic to a Klingen parabolic of $\GSp_4$.
Both integrals vanish by the cuspidality of $\varphi$, since they have as an inner integral the constant coefficient along the unipotent subgroup 
\begin{equation}\label{eq:univanishing1orbit}
   \left(\begin{smallmatrix}
1 &   & \star & \star & \star & \star & \star & \star \\ 
  & 1 & \star & \star & \star & \star & \star & \star \\
  & & 1 & & & & \star & \star \\ 
  & & & 1 & & & \star & \star \\
  & & & & 1 & &\star & \star \\
  & & & & & 1 & \star & \star \\
  & & & & & & 1  &   \\ 
  & & & & & & & 1
\end{smallmatrix} \right).
\end{equation} 

Let now $i = 2$. Conjugate the expression by $w_0$ to get
\[ \int_{V''_1(\Q) N''_1(\Q) N_0(\A) \Z_{\G}(\A) \backslash \H(\A)} f_s(h) \int_{(\Q \backslash \A)^7 } \varphi \left( \left(\begin{smallmatrix}
1 & x_2 & x_1 & x_3 & x_4 & x_6 & x_5 & y_1\\ 
  & 1 & & & & & & \star \\
  & & 1 & & & & &  \star \\ 
  & & & 1 & & &  & \star \\
  & & & & 1 & &  & \star \\
  & & & & & 1 & & \star \\
  & & & & & & 1 & \star \\
  & & & & & & & 1
\end{smallmatrix} \right) w_0 h \right) \psi(x_2) dx dy dh, \]
where  
\[ V''_1 =  \left(\begin{smallmatrix}
\lambda & & & & & & & \\ 
  & \lambda &  &  &  & & &   \\
  & & a & & & b & &\\
  & & & \star & \star & & &\\
  & & & \star & \star & & & \\
  & & c & & & d & &\\
  & & & & & & 1 & \\
  & & & & & & & 1
\end{smallmatrix} \right), N''_1 =  \left(\begin{smallmatrix}
1 & & & & & & & \\ 
  & 1 &  & \star & \star & & \star &   \\
  & & 1 & & &  & &\\
  & & & 1 &  & & \star &\\
  & & & & 1 & & \star & \\
  & &  & & & 1 & &\\
  & & & & & & 1 & \\
  & & & & & & & 1
\end{smallmatrix} \right). \]
Consider the unipotent subgroup
\[ U''_1 = \left(\begin{smallmatrix}
1 & & & & & & &  \\ 
  & 1 & x_7 & & & x_8 & \star &  \\ 
  & & 1 & & & & \star &  \\ 
  & & & 1 & & & &  \\
  & & & & 1 & & &  \\
  & & & & & 1 & \star &  \\
  & & & & & & 1 & \\  
  & & & & & & & 1
\end{smallmatrix} \right).
\]
As before, one has an action of $V''_1 = \GL_2 \boxtimes \GL_2 $ on $U''_1 / [U''_1, U''_1] \cong \mathbf{G}_a^2$ (the action is given by the natural action of the first copy of $\GL_2$ on $\mathbf{G}_a^2$) and there are two orbits. Hence the above integral equals
\[ \int_{V''_2(\Q) N''_2(\A) \Z_{\G}(\A) \backslash \H(\A)} f_s(h) \int_{(\Q \backslash \A)^{13} } \varphi \left( \left(\begin{smallmatrix}
1 & x_2 & x_1 & x_3 & x_4 & x_6 & x_5 & y_1 \\ 
  & 1 & x_7 & y_2 & y_3 & x_8 & y_4 & \star  \\ 
  & & 1 & & & y_5 & \star & \star \\ 
  & & & 1 & & & \star & \star \\
  & & & & 1 & &\star & \star \\
  & & & & & 1 & \star & \star \\
  & & & & & & 1  & \star \\ 
  & & & & & & & 1
\end{smallmatrix} \right) h \right) \psi(x_2 + x_7) dx dy dh \]
\[ +
 \int_{V''_1(\Q) N''_1(\A) N_0(\A) \Z_{\G}(\A) \backslash \H(\A)} f_s(h) \int_{(\Q \backslash \A)^{12} } \varphi \left( \left(\begin{smallmatrix}
1 & x_2 & x_1 & x_3 & x_4 & x_6 & x_5 & y_1 \\ 
  & 1 & x_7 & y_2 & y_3 & x_8 & y_4 & \star  \\ 
  & & 1 & & & & \star & \star \\ 
  & & & 1 & & & \star & \star \\
  & & & & 1 & &\star & \star \\
  & & & & & 1 & \star & \star \\
  & & & & & & 1  & \star \\ 
  & & & & & & & 1
\end{smallmatrix} \right) h \right) \psi(x_2) dx dy dh,
\]
where  
\[ V''_2 =  \left(\begin{smallmatrix}
\lambda & & & & & & & \\ 
  & \lambda &  &  &  & & &   \\
  & & \lambda & & &  & &\\
  & & & \star & \star & & &\\
  & & & \star & \star & & & \\
  & & & & & 1 & &\\
  & & & & & & 1 & \\
  & & & & & & & 1
\end{smallmatrix} \right), N''_2 =  \left(\begin{smallmatrix}
1 & & & & & & & \star \\ 
  & 1 &  & \star & \star & & \star &   \\
  & & 1 & & & \star  & &\\
  & & & 1 &  & & \star &\\
  & & & & 1 & & \star & \\
  & &  & & & 1 & &\\
  & & & & & & 1 & \\
  & & & & & & & 1
\end{smallmatrix} \right). \]
The second integral vanishes by cuspidality along the unipotent of Equation \eqref{eq:univanishing1orbit}. Making a last Fourier expansion using the unipotent group
\[ U_3 = \left(\begin{smallmatrix}
1 & & & & & & &  \\ 
  & 1 & & & &  & &  \\ 
  & & 1 & \star & \star & \star &  &  \\ 
  & & & 1 & & \star & &  \\
  & & & & 1 & \star & &  \\
  & & & & & 1 & &  \\
  & & & & & & 1 & \\  
  & & & & & & & 1
\end{smallmatrix} \right).
\] and using that $V_2'' \cong \GL_2$ acts on its abelianization (which identifies with $\G_a^2$ and the action is the natural one) with two orbits, one shows in a similar way as before that the first integral vanishes. 

We are left to show that the contribution of the trivial orbit with respect to the action of $L_1$ on $U_1 / [U_1, U_1] \cong \mathbf{G}_a$ is zero. In this case, the integral associated to the trivial orbit is
\[ \int_{  L_1(\Q) N_1(\A)N_0(\A) \Z_{\G}(\A) \backslash \H(\A)} f_s(h)  \int_{(\Q \backslash \A)^{12} } \varphi \left( \left(\begin{smallmatrix}
1 & x_2 & x_1 & x_3 & x_4 & x_6 & x_5 & y_1 \\ 
  & 1 & & y_3 & y_4 & x_7 & y_5 &  \star  \\ 
  & & 1 & & & y_2 & \star & \star  \\ 
  & & & 1 & & & \star & \star  \\ 
  & & & & 1 & & \star & \star  \\ 
  & & & & & 1 & &  \star \\
  & & & & & & 1 &  \star \\  
  & & & & & & & 1
\end{smallmatrix} \right)  w_0 h \right) \psi(x_1 + x_2) dx dy  dh. \]
Consider the unipotent subgroup $U_3$ as above 
and notice that $L_1$ acts on $U_3/ [ U_3,U_3]  \cong \mathbf{G}_a^2$ with two orbits. Thus, by Fourier expanding along $[U_3]$, we have 
\[  \int_{  V_1'''(\Q) U_{\mathbf{B}_{\H}}(\A) \Z_{\G}(\A) \backslash \H(\A)} f_s(h)  \int_{(\Q \backslash \A)^{13} } \varphi \left( \left(\begin{smallmatrix}
1 & x_2 & x_1 & x_3 & x_4 & x_6 & x_5 & y_1 \\ 
  & 1 & & y_3 & y_4 & x_7 & y_5 &  \star  \\ 
  & & 1 & x_8 & x_9 & y_2 & \star & \star  \\ 
  & & & 1 & y_6 &  \star & \star & \star  \\ 
  & & & & 1 & \star & \star & \star  \\ 
  & & & & & 1 & &  \star \\
  & & & & & & 1 &  \star \\  
  & & & & & & & 1
\end{smallmatrix} \right)  w_0 h \right) \psi(x_1 + x_2 + x_8) dx dy  dh  \]
\[+ \int_{  L_1(\Q) N_1(\A)N_0(\A) \Z_{\G}(\A) \backslash \H(\A)} f_s(h)  \int_{(\Q \backslash \A)^{12} } \varphi \left( \left(\begin{smallmatrix}
1 & x_2 & x_1 & x_3 & x_4 & x_6 & x_5 & y_1 \\ 
  & 1 & & y_3 & y_4 & x_7 & y_5 &  \star  \\ 
  & & 1 & x_8 & x_9 & y_2 & \star & \star  \\ 
  & & & 1 & &  \star & \star & \star  \\ 
  & & & & 1 & \star & \star & \star  \\ 
  & & & & & 1 & &  \star \\
  & & & & & & 1 &  \star \\  
  & & & & & & & 1
\end{smallmatrix} \right)  w_0 h \right) \psi(x_1 + x_2) dx dy  dh,  \]
with $V_1''' \subset L_1$ being a one-dimensional torus.  
The first integral vanishes by the cuspidality of $\varphi$ along the unipotent radical of the Siegel parabolic,
while the second vanishes by the cuspidality of $\varphi$ along 
\[  \left(\begin{smallmatrix}
1 &   &   &  \star & \star & \star & \star & \star \\ 
  & 1 &   & \star  & \star & \star & \star & \star \\
  & & 1 & \star & \star & \star & \star & \star \\ 
  & & & 1 &   & \star & \star & \star \\
  & & & & 1 & \star & \star & \star  \\
  & & & & & 1 &   &   \\
  & & & & & & 1  &   \\ 
  & & & & & & & 1
\end{smallmatrix} \right).\]
This completes the proof of the proposition.

\end{proof}
 
\subsection{The case  of $\GSp_{10}$}
The integral for $\G = \GSp_{10}$ involves two Eisenstein series for $\GL_2$ and is a natural generalisation of the integral for $\GSp_4$. Indeed, it unfolds to a $\GSp_{10}$-analogue of the Bessel model for $\GSp_4$. 

We let $\pi$ be a cuspidal automorphic representation of $\G$. Moreover, let $f_{i,s}(\star):=f(\star,\Phi_i,s) \in \operatorname{Ind}_{\B_2(\A)}^{\GL_2( \A)}(|\lambda|^s)$ as in \S \ref{sectionmodularunits}. For a cusp form $\varphi$ in the space of $\pi$ consider the integral 

\[I(\Phi_1,\Phi_2, \varphi, s) =  \int_{\H(\Q)Z_{\G}(\A) \backslash \H(\A) } {\rm E}(h_1,\Phi_1,s){\rm E}(h_2,\Phi_2,s) \varphi(h) dh,  \]
where $\H = \GL_2 \boxtimes \GL_2 \boxtimes \GSp_6$. 

\begin{proposition} \label{unfoldingforgsp10}
The integral $I(\Phi_1,\Phi_2, \varphi, s)$ unfolds to \[ \int_{R(\A) N_0(\A) \backslash {\H}(\A)} f_{1,s}( h_1)f_{2,s}(h_2) \int_{R(\Q) \Z_{\G}(\A)  \backslash R(\A)}
\varphi_{2,\chi}(v h) dv dh,\]
where $\varphi_{2,\chi}$ is the Fourier coefficient of type $(2^2\; 1^6)$, $N_0 = U_{\mathbf{B}_{\GL_2}} \times U_{\mathbf{B}_{\GL_2}} \times \{ I \}$, and \[R =  \left \{ \left(\begin{smallmatrix}
\lambda & & & &  \\
   & \lambda' &  & &    \\
  &  & g & & & \\
  & & & \lambda & \\
  & & & & \lambda'
\end{smallmatrix} \right),\;\; g \in \GSp_6 \;:\; \nu(g) =\lambda \lambda' \right \}.  \]
\end{proposition}

\begin{proof}
 
We start by unfolding the two Eisenstein series to get 
\begin{eqnarray*}
I(\Phi_1,\Phi_2,\varphi,s) &=& \int_{(\B_{\GL_2}\boxtimes \B_{\GL_2} \boxtimes \GSp_6)(\Q) \Z_\G(\A)  \backslash \H(\A)} f_{1,s}(h_1)f_{2,s}(h_2) \varphi(h) dh \\
&=& \int_{V_0(\Q) N_0(\A) \Z_{\G}(\A)  \backslash {\H}(\A)} f_{1,s}(h_1)f_{2,s}(h_2)
\int_{(\Q \backslash \A)^2} \varphi \left( \left(\begin{smallmatrix}
1 & & & &  y_1 \\ 
  & 1 & & y_2 &   \\ 
  & & I & &  \\ 
  & & & 1 &  \\
  & & & & 1 
\end{smallmatrix} \right) h \right)dy dh.
\end{eqnarray*}
Here $V_0 = (\T_{\GL_2}\boxtimes \T_{\GL_2} \boxtimes \GSp_6)$, $N_0 =U_{\mathbf{B}_{\GL_2}} \times U_{\mathbf{B}_{\GL_2}} \times \{ I \}$.

Let now $U_0 \simeq \G_a^3 $ denote the unipotent subgroup   \[ \{ u(z,y_1,y_2) = \left(\begin{smallmatrix}
1 & &  & z & y_1 \\ 
  & 1 & &  y_2 & z\\ 
  & & I & &  \\ 
  & & & 1 &  \\
  & & & & 1 
\end{smallmatrix} \right) \in \G, \text{ with } z,y_1,y_2 \in \G_a \}.\]
The group $V_0$ acts on $U_0$ and in particular its $\Q$-points act on the dual of $(U_0 / N_0)(\Q \backslash \A)$ with two orbits, one of which is open. Hence, Fourier expanding over the $(\Q \backslash \A)$-points of $U_0$ and using the action of $V_0(\Q)$,  $I(\Phi_1,\Phi_2,\varphi,s) = I_{\gamma_{0}}(\Phi_1,\Phi_2,\varphi,s)+ I_{\gamma_{1}}(\Phi_1,\Phi_2,\varphi,s)$, where the $\gamma_i$'s denote representatives of the orbits of this action and $I_{\gamma_i}(\Phi,\varphi,s)$ is as in the proof of Proposition \ref{unfoldingforgsp8}. Let us suppose for the moment that the integral $I_{\gamma_1}(\Phi,\varphi,s)$ corresponding to the closed orbit vanishes. Then, the integral becomes 

\[\int_{R(\Q) N_0(\A) \Z_{\G}(\A)  \backslash {\H}(\A)} f_{1,s}(h_1)f_{2,s}(h_2)
\int_{(\Q \backslash \A)^3} \varphi \left(u(z,y_1,y_2) h \right) \psi(z) dz dy_1 dy_2 dh, \]

where \[R =  \left \{ \left(\begin{smallmatrix}
\lambda & & & &  \\
   & \lambda' &  & &    \\
  &  & g & & & \\
  & & & \lambda & \\
  & & & & \lambda'
\end{smallmatrix} \right),\;\; g \in \GSp_6 \;:\; \nu(g) =\lambda \lambda' \right \}.  \]
Notice that the integral \[ \int_{(\Q \backslash \A)^3} \varphi \left(u(z,y_1,y_2) g \right) \psi(z) dz dy_1 dy_2  \]
is the Fourier coefficient $\varphi_{2,\chi}(g)$ of Definition \ref{fouriercoeff} associated to the unipotent orbit $(2^2\, 1^6)$.
We now collapse the sum over $[R]$ to get

\begin{eqnarray*}
I(\Phi_1,\Phi_2,\varphi,s) &=& \int_{R(\A) N_0(\A) \backslash {\H}(\A)} \int_{R(\Q) \Z_{\G}(\A)  \backslash R(\A)} f_{1,s}(v_1 h_1)f_{2,s}(v_2 h_2)
\varphi_{2,\chi}(v h) dv dh  \\
&=& \int_{R(\A) N_0(\A) \backslash {\H}(\A)}  f_{1,s}( h_1)f_{2,s}(h_2) \int_{R(\Q) \Z_{\G}(\A)  \backslash R(\A)}
\varphi_{2,\chi}(v h) dv dh,
\end{eqnarray*}
as desired.

We are left to show that \[I_{\gamma_{1}}(\Phi_1,\Phi_2,\varphi,s)=\int_{R(\Q) N_0(\A) \Z_{\G}(\A)  \backslash {\H}(\A)} f_{1,s}(h_1)f_{2,s}(h_2)
\int_{(\Q \backslash \A)^3} \varphi \left(u(z,y_1,y_2) h \right) dz dy dh \]
vanishes. Consider the unipotent subgroup of $\G$
\[ U_1 =  \left(\begin{smallmatrix}
1 & & \star  & \star & \star \\ 
  & 1 & \star &  \star & \star \\ 
  & & I & \star & \star \\ 
  & & & 1 &  \\
  & & & & 1 
\end{smallmatrix} \right) \simeq \mathbf{G}_a^{15}, \]
which is the unipotent radical of the parabolic subgroup $P_1$ of Levi $L_1 =\GL_2 \times \GSp_6$ whose flag parametrizes isotropic planes in the standard representation of $\G$. Notice that $U_1 = U_2(\mathcal{O})$, with $\mathcal{O} = (3^2\,1^4)$. The group $R(\Q) \subset L_1(\Q)$ acts on the dual of $(U_1/ U_0)(\Q\backslash \A)$ with two orbits, one of which is open. Taking the Fourier expansion over the $(\Q \backslash \A)$-points of $U_1$ and using the action of $R(\Q)$, we get   \[I_{\gamma_{1}}(\Phi_1,\Phi_2,\varphi,s) =  I_{\gamma_{0}'}(\Phi_1,\Phi_2,\varphi,s)+I_{\gamma_{1}'}(\Phi_1,\Phi_2,\varphi,s),\] where  $\gamma_1'$ denotes the representative of the open orbit. The integral $I_{\gamma_{0}'}(\Phi_1,\Phi_2,\varphi,s)$ associated to the identity orbit vanishes because of cuspidality of $\varphi$ along the unipotent radical $U_1$ of $P_1$. The integral corresponding to the open orbit equals  
\[ \int_{V_1(\Q) N_0(\A) \Z_{\G}(\A)  \backslash {\H}(\A)} f_{1,s}(h_1)f_{2,s}(h_2)
\int_{(\Q \backslash \A)^{15}} \varphi \left(\left(\begin{smallmatrix}
1 & & x_1 & \star & \star & \star &\star & \star &  \star \\ 
  & 1 & \star & x_2 &\star & \star & \star & \star & \star \\ 
  & & 1  &  &   &  & & \star&   \star \\
  & & & 1  &   &  & &   \star  & \star \\
  & & & & I_2 & &  & \star & \star \\
   & & & & & 1 & & \star& \star\\
& & & & & & 1 & \star & \star\\
    & & & & & & & 1 & \\
  & & & & & & & & 1
\end{smallmatrix} \right) h \right) \psi(x_1 + x_2) dx dh, \]
with $V_1 = M_1 N_1$, where $N_1 = \{ 1 \} \times \{ 1 \} \times U_2$ is isomorphic to the unipotent radical of the maximal standard parabolic $P_2 = L_2 U_2$ of $\GSp_6$, with $L_2 \simeq \GL_2^2$, and 
\[ M_1 =  \left \{ \left(\begin{smallmatrix}
\lambda & &   &   &  &  &  &   &   \\ 
  & \lambda' &  &  & &  & &  &  \\ 
  & & \lambda  &  &   &  & & & \\
  & & & \lambda' &  &  & &  &  \\
  & & & & g & &  &   &   \\
   & & & & & \lambda & &  &  \\
& & & & & & \lambda' &  & \\
    & & & & & & & \lambda & \\
  & & & & & & & & \lambda'
\end{smallmatrix} \right),\;\; g \in \GL_2\,:\, \nu(g)=\lambda\lambda' \right \}.\]
After collapsing the sum over $[N_1]$, the integral $I_{\gamma_{1}'}(\Phi_1,\Phi_2,\varphi,s)$ contains as an inner integral the period of $\varphi$ over the unipotent radical subgroup \[ \left(\begin{smallmatrix}
1 & & & &\star  & \star & \star &\star & \star &  \star \\ 
 & 1 & &  & \star &\star & \star & \star & \star & \star \\ 
  & & 1 &   & \star & \star  & \star &\star & \star&   \star \\
  & & & 1 & \star  & \star  & \star &\star &   \star  & \star \\
 & & & & 1&  &\star & \star & \star & \star \\
  & & & & & 1& \star & \star & \star & \star\\
  & & & & & & 1 & &  &  \\
& & & & & & & 1 &   & \\
& & & & & & & & 1 & \\
& & & & & & & & & 1
\end{smallmatrix} \right),\]
and thus $I_{\gamma_{1}'}(\Phi_1,\Phi_2,\varphi,s)$ vanishes because of cuspidality of $\varphi$.

\end{proof}

\subsection{The case of $\GSp_{14}$}

We now describe the unfolding for the integral when $\G=\GSp_{14}$. We are indebted to David Ginzburg, who crucially helped us to complete the proof to Proposition below. 

As in \S \ref{s:unfoldinggsp8}, we let $\pi$ be a cuspidal automorphic representation of $\G$ with trivial central character and consider \[ I(\Phi, \varphi, s) = \int_{\H(\Q)Z_{\G}(\A) \backslash \H(\A) } {\rm E}(h_1,\Phi,s) \varphi(h) dh,\]
where $\varphi$ is a cusp form for $\pi$  and $\H = \GL_2 \boxtimes  \GSp_4  \boxtimes \GSp_8$. We have the following. 

\begin{proposition} \label{unfoldingforgsp14}
The integral $ I(\Phi, \varphi, s)$ unfolds to an adelic integral involving a Fourier coefficient $\varphi_{1,\chi}$ associated to the unipotent orbit $(4 \, 2 \, 1^8)$.  
\end{proposition}

\begin{proof}
The proof is very similar to the one of Proposition \ref{unfoldingforgsp8}, so we limit ourselves to sketch it here. After unfolding the Eisenstein series, we get 
\begin{eqnarray*}
I(\Phi,\varphi,s) &=& \int_{V_0(\Q) N_0(\A) \Z_{\G}(\A)  \backslash {\H}(\A)} f_s(h_1)
\int_{\Q \backslash \A} \varphi \left( \left(\begin{smallmatrix}
1 &  & y_1 \\ 
  & I &  \\ 
  & & 1
\end{smallmatrix} \right) h \right)dy_1 dh,
\end{eqnarray*}
where $V_0 = (\T_{\GL_2} \boxtimes \GSp_4 \boxtimes \GSp_8)$ and $N_0$ denotes the abelian subgroup of unipotent matrices whose only non-zero entry is at the upper right corner.

Fourier expanding over the $(\Q \backslash \A)$-points of the unipotent radical $U_0$ of the Klingen parabolic, one gets
\[\int_{V_0(\Q) N_0(\A) \Z_{\G}(\A) \backslash {\H}(\A)} f_s(h_1) \sum_{\chi \in \widehat{(\Q \backslash \A)^{12}}} \int_{(\Q \backslash \A)^{13}} \varphi \left( \left(\begin{smallmatrix}
1 & x_1 & \cdots & x_{12} & y_1 \\ 
  & 1 & & & \star \\ 
  & & I & &  \star \\
  & & & 1 & \star \\
  & & & & 1
\end{smallmatrix} \right) h \right) \chi(x) dx dy_1 dh. \]
Notice that $V_0$ acts over $U_0 / [U_0, U_0]$ with four orbits, one of which is open.
As in the proof of Proposition \ref{unfoldingforgsp8}, we can use this action to write
\begin{equation*}
     I(\Phi,\varphi,s) = \sum_{i=0}^3
     I_{\gamma_{i}}(\Phi,\varphi,s),
\end{equation*}
where the $\gamma_i$'s denote representatives of the orbits of this action and $I_{\gamma_i}(\Phi,\varphi,s)$ denotes
\[\int_{{\rm Stab}(\gamma_i)(\Q) N_0(\A) \Z_{\G}(\A)  \backslash \H(\A)} f_s(h_1) \int_{(\Q \backslash \A)^{13}} \varphi \left( \left(\begin{smallmatrix}
1 & x_1 & \cdots & x_{12} & x_{13} \\ 
  & 1 & & & \star \\ 
  & & I & &  \star \\
  & & & 1 & \star \\
  & & & & 1
\end{smallmatrix} \right) h \right) \psi(\gamma_i \cdot x) dx dh, \]
with $\psi: \Q \backslash \A \to \C^\times$ is the additive character fixed in \emph{loc.cit.}. The three integrals corresponding to the closed orbits vanish by cuspidality of $\varphi$ (the proof is identical to the one for $\GSp_8)$. 
   
The open orbit is represented by $\gamma_0 = (1, 0, 1, 0,\cdots , 0, 0)$ so that the corresponding character is $\psi(x_1 + x_3)$ with stabiliser $V_1$ equal to a product of Klingen parabolics. Denote $V_1=L_1 \cdot N_1$ its Levi decomposition. 
Then, collapsing the integral over $[N_1]$ and conjugating by the Weyl element
\[ w_0 = \left(\begin{smallmatrix}
 1 & & & & & & & &  \\ 
  & & & 1 & & & &  &  \\ 
  & 1 & & & & & &  &  \\
  &  & 1 & & & & &  &  \\ 
  &  & & & I_8 & & &  & \\ & & & & & & & 1 & \\
  & & & & & 1 & & & \\
  & & & & & & 1 & & \\  
  & & & & & & & & 1
\end{smallmatrix} \right), \] 
where $I_r$ denotes the $r\times r$ identity matrix, we get that  $ I(\Phi,\varphi,s) = I_{\gamma_{i}}(\Phi,\varphi,s)$ equals \begin{equation*} \int f_s(h) \int_{(\Q \backslash \A)^{23} } \varphi  \left( \left(\begin{smallmatrix}
1 & x_3 & x_1 & x_2 & x_4 & \cdots & x_9 & x_{10} & x_{11} & x_{12}& x_{13} \\ 
  & 1 &  &  & y_1 & \cdots & y_6 & &  & y_7 & \star \\ 
  & & 1 & y_8 & & & & y_9 & y_{10} & &  \star \\
  & & & 1 & & & &  & \star&   & \star \\
  & & & & 1 & & &   & & \star & \star \\
  & & & & & I_4 & &  & & 
  \star & \star \\
  & & & & & & 1 &   & &  \star & \star \\
  & & & & & & & 1 &  &    & \star \\
  & & & & & & & & 1 &  & \star\\
  & & & & & & & & & 1 & \star\\
  & & & & & & & & & & 1
\end{smallmatrix} \right) w_0 h \right)  \psi(x_1 + x_3) dx dy dh,
\end{equation*}
where the first integral is taken over $L_1(\Q) N_1(\A)N_0(\A) \Z_{\G}(\A) \backslash \H(\A)$.

Consider now the unipotent subgroup
\[ U_1 = \left( \begin{smallmatrix}
1 & & &  & & & \\ 
  & 1 & & & x_{14} & \star &  \\ 
  & & 1 &  & & x_{14} &  \\ 
  & & & I_8  & & & \\
  & & & &  1 & & \\
  & & & & &  1 & \\  
  & & & & & &  1
\end{smallmatrix} \right) \]
so that $L_1$ acts on $U_1 / [U_1, U_1] \cong \mathbf{G}_a$ with two orbits, one open and one closed. Fourier expand over $[U_1]$. Again, one shows that the integral corresponding to the trivial orbit vanishes. 
After applying the global root exchange to move $y_8$ and $y_9$ respectively to positions $(2,11)$ and (2,4) and \cite[Lemma 7.1]{GinzburgRallisSoudryDescent}, we get that the integral equals 
\begin{equation*} \int_{\A^2} \int_{D} f_s(h) \int_{(\Q \backslash \A)^{24} } \varphi  \left( \left(\begin{smallmatrix}
1 & x_3 & x_1 & \star &\star & \star &  \star \\ 
  & 1 & & \star & x_{14} & \star & \star \\ 
  & & 1  &  &  \star & \star &  \star \\
  & & & I_8  &   &  \star  & \star \\
  & & & & 1  & & \star \\
   & & & & & 1 & \star\\
  &  & & & & & 1
\end{smallmatrix} \right) \iota(y_8,y_9) w_0 h \right)  \psi(x_1 + x_3+ x_{14}) dxdh dy_8 dy_9.
\end{equation*}
where $D=V_2(\Q)N_2(\A) \Z_{\G}(\A) \backslash \H(\A)$, with $ V_2 \simeq \SL_2 \times \Sp_6$, $N_2$ denotes the unipotent radical of the Klingen parabolic for $\H$, and $\iota(y_8,y_9)$ denotes the two dimensional unipotent subgroup generated by the entries $y_8, y_9$.

Notice that similar to the case of $\GSp_8$, the inner integral is the Fourier coefficient $\varphi_{1,\chi}$ associated to the unipotent orbit $(4 \, 2 \, 1^8)$. Finally, collapsing the integral along $V_2$, we finally get 
\begin{equation*} \int_{\A^2} \int_{(\SL_2 \times \Sp_6)(\A)N_2(\A) \Z_{\G}(\A) \backslash \H(\A)} f_s(h)  \int_{[\SL_2 \times \Sp_6]}   \varphi_{1,\chi}( r(g_1,g_2) \iota(y_8,y_9) w_0 h) dg dh dy_8 dy_9,
\end{equation*}
where $r:\SL_2 \times \Sp_6 \hookrightarrow \GSp_{14}$ is the embedding  
\[(g_1,g_2)= \left( \left( \begin{smallmatrix} a & b \\ c & d \end{smallmatrix} \right), g_2 \right) \mapsto \left(\begin{smallmatrix}
I_3 &   &   & & \\ 
  & a &   & b  & \\
  & & g_2  & &\\ 
  & c & & d & \\
  & & & & & I_3
\end{smallmatrix} \right).\]

\end{proof}

\bibliographystyle{alpha}
\bibliography{bibliography}

\end{document}